\newtheorem{prop}{Proposition}[section]
\newtheorem*{rmk}{Remark}
\newtheorem{lem}[prop]{Lemma}
\newtheorem{ex}{Example}
\newtheorem{thm}[prop]{Theorem}
\newtheorem{cor}[prop]{Corollary}
\newtheorem{defn}{Definition}
\newcommand{\rarrow}{\rightarrow}
\newcommand{\ep}{\epsilon}
\newcommand{\om}{\omega}
\newcommand{\ZZ}{\mathbb{Z}}
\newcommand{\NN}{\mathbb{N}}
\newcommand{\RR}{\mathbb{R}}
\newcommand{\QQ}{\mathbb{Q}}
\newcommand{\ZZp}{\mathbb{Z}_{\ge0}}
\newcommand{\Inv}{\text{Inv }}
\newcommand{\id}{\text{id }}
\newcommand{\vp}{\varphi}
\newcommand{\ol}{\overline}
\newcommand{\cl}{\overset{cl}}
\newcommand{\es}{\emptyset}
\newcommand{\mf}{\mathfrak}
\newcommand{\clrel}{\overline{\mathcal{R}}}
\newcommand{\konf}{\mathfrak{K}}
\newcommand{\lonf}{\mathfrak{L}}
\newcommand{\fonf}{\mathfrak{F}}
\newcommand{\Int}{\text{int}}
\begin{document}

\begin{titlepage}

\newcommand{\HRule}{\rule{\linewidth}{0.5mm}} 

\center 
 


{ \huge \bfseries Attractors and Attracting Neighborhoods for Multiflows}\\[0.5cm] 
\textsc{\Large A Dissertation}\\[0.4cm]
\HRule \\[1.5cm]


\begin{minipage}{0.4\textwidth}
\begin{flushleft} \large
\emph{Author:}\\
Shannon Negaard-Paper
\end{flushleft}
\end{minipage}
~
\begin{minipage}{0.4\textwidth}
\begin{flushright} \large
\emph{Advisor:} \\
Dr. Richard McGehee 
\end{flushright}
\end{minipage}\\[4cm]

\textsc{\LARGE University of Minnesota}\\[1.5cm] 
\textsc{\Large Department of Mathematics}\\[0.5cm] 



{\large \today}\\[3cm] 


 

\vfill 

\end{titlepage}

\section*{Abstract}

We already know a great deal about dynamical systems with uniqueness in forward time. Indeed, flows, semiflows, and maps (both invertible and not) have been studied at length. A view that has proven particularly fruitful is topological: consider invariant sets (attractors, repellers, periodic orbits, etc.) as topological objects, and the connecting sets between them form gradient like flows. In the case of systems with uniqueness in forward time, an attractor in one system is related to nearby attractors in a family of other, ``close enough" systems. 

One way of seeing that connection is through the Conley decomposition (and the Conley index) \ref{bk:Conley}, \ref{bk:Mischaikow}. This approach requires focusing on isolated invariant sets - that is, invariant sets with isolating neighborhoods. If there is an invariant set $I$, which has an isolating neighborhood $N$, we say that $I$ is the invariant set associated to $N$, and $N$ is an isolating neighborhood associated to $I$. When the invariant set in question is an attractor or a repeller, then the isolating neighborhood is called an attracting neighborhood or a repelling neighborhood, respectively. A more specialized case may be called an attractor block or a repeller block.

This approach was expanded to discrete time systems which lack uniqueness in forward time, using relations, in \ref{bk:McGeheeAttractors} and \ref{bk:McGeheeWiandt}. Relations do not rely on uniqueness in forward time, but the graph of any map is a relation; thus they serve to generalize maps. Some of this is reviewed in the next few sections. In addition, I expanded on work done in \ref{bk:McGeheeAttractors} to show that in compact metric spaces, attractors for closed relations continue (see Section \ref{sec:CloseRelns}).

On the continuous time side, more work needs to be done. This paper is a step toward a more systematic approach for continuous time systems which lack uniqueness in forward time. This work applies to Filippov systems \ref{bk:Filippov} and in control theory \ref{pc:KJMeyer}. 
In the following pages, we establish a tool (multiflows) for discussing the continuous time case and develop a framework for understanding attractors (and therefore stability) in these systems. A crucial part of this work was establishing attractor / attracting neighborhood pairs, which happens in Section \ref{sec:AttAndAttBlk}.

\newpage
\tableofcontents

\newpage
\section{Introduction: Why Relations and Multiflows}\label{sec:motivation}

The primary objects studied in classical dynamical systems are flows, vector fields, and maps. Specifically, if an autonomous vector field $\dot{x}=f(x)$ ($x\in X$, $t\in\RR$) is locally Lipschitz continuous, then there exists a flow $\vp:X\times\RR\rarrow X$ which satisfies
	\[     \dot{\vp}(\xi,t)=f(\vp(\xi,t)) \hbox{ for all }\xi\in X,~t\in\RR \]
and the usual properties of a flow (to be reviewed explicitly in the Section \ref{sec:MapsAndFlows}). More recent work considers what happens in ``non-smooth" systems. That is, what occurs when $\dot{x}=f(x)$, where $f(x)$ is not necessarily locally Lipschitz continuous. These methods have been useful, but have yet to be tackled in a fully systematic fashion.\footnote{See comments by J. M. Guckenheimer, \ref{bk:Guckenheimer}.} Here, we propose a structure, which serves as a generalization of flows: multiflows. They have already proven useful, in ways we will discuss in a moment. For now, we look at where they fit in.

The dynamical systems we shall explore here have solutions in one of six categories of objects. These objects are categorized by three things:
\begin{enumerate}
	\item how time behaves,
	\begin{itemize}
		\item discrete
		\item continuous
	\end{itemize}
	\item existence and/or uniqueness of path in forward time,
	\item existence and/or uniqueness of path in backward time.
\end{enumerate}

The organization is as follows:

\begin{center}{\bf Organization of Dynamical Systems}\end{center}
\[\begin{array}{ c || c | c}
  & \hbox{{\bf discrete time}} & \hbox{{\bf continuous time}} \\
  \hline\hline
  \hbox{{\bf forward and }} &&\\ \hbox{{\bf backward uniqueness}} & \hbox{invertible maps} & \hbox{flows} \\
  \hline
  \hbox{{\bf forward uniqueness}} & \hbox{maps} & \hbox{semiflows} \\
  \hline
  \hbox{{\bf neither}}  &  \hbox{relations} & \hbox{``multiflows"} \end{array}\label{table:maps} \]
  
  It should be noted that, just like flows are semiflows and invertible maps are maps, so too will we regard relations and multiflows as generalizations of maps and semiflows, respectively.
  
 We are most familiar with functions. In particular, we are familiar with {\it flows} (continuous time) and {\it maps} (discrete time). In dynamical systems, invertible maps were the first discrete objects studied, as they are more closely aligned with flows, but applications provided motivation to study all maps. In these cases, we know precisely how to move forward in time from any state space. With flows, we move along a set path, which passes through some point $x$ in space, and we move exactly $t\in\RR$ units along that path, defining an orbit. For maps, we also have some point in space $x$, and we simply apply the map, possibly more than once. Iterations of the map provide the ``forward motion" for any natural number. Maps work for systems and applications where discrete time makes more sense. It is worth noting that semiflows are actually a better continuous-time analogue of maps. Only when we specify that a map $f$ is invertible are we guaranteed some way to move backward - via $f^{-1}$. If $x$ is a point in space and $f$ is an invertible map, we will be able to find $f^n(x)$ (and it will be unique) for all $n\in\ZZ$, but if we move to a slightly more general object - say a map $g$ (which may or may not be invertible), then $g^n(x)$ exists and is unique for all $n\in\ZZ_{\ge0}$. Similar statements can be made for flows and semiflows. But this introduces an asymmetry, with regard to time. Only if we generalize even further do we repair this asymmetry. 
 
We also wish to focus on the topological tools, originally developed by C. Conley and his students, to study the structure of the dynamical systems in question. McGehee, Sander, and Wiandt already did much to establish the usefulness and structure of relations as generalizations for maps \ref{bk:McGeheeAttractors},\ref{bk:McGeheeSander},\ref{bk:McGeheeWiandt}. In \ref{bk:McGeheeAttractors}, McGehee defined (among other things) attractors in systems which are determined by a relation and made many discoveries of the behavior. In \ref{bk:McGeheeSander}, McGehee and Sander proved an analog of the stable manifold theorem  for systems defined by relations, and in \ref{bk:McGeheeWiandt}, McGehee and Wiandt defined the Conley decomposition for closed relations. We now lay some groundwork for similar advancements on the continuous time side of the subject.
 
It is not difficult to find motivation to explore multiflows. For instance, the solution to $\dot{x}=|x|^{1/2}$ ($x\in X=\RR$) is in a family of the first examples one sees in a dynamical systems course ($\dot{x}=|x|^\alpha,$ $0<\alpha<1$). The function $f(x)=|x|^{1/2}$ is not locally Lipschitz continuous near $x=0$, and the solutions to $\dot{x}=|x|^{1/2}$ lack uniqueness in both forward and backward directions. The orbits are not unique, so we create a new object, which collects all the orbits: a stream. Both of those terms will be defined rigorously in Section \ref{sec:Disp}.

\begin{figure}\centering
  \includegraphics[width=2.5in]{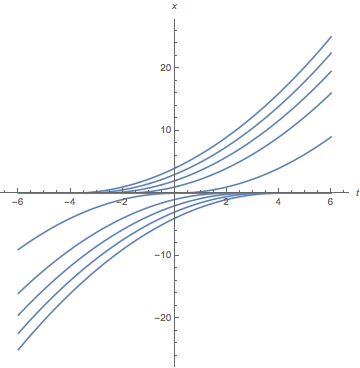}\label{fig:dispEx1}
  \caption{Some of the solutions to $\dot{x}=|x|^{1/2}$.}
\end{figure}


\begin{figure}\centering
  \includegraphics[height=2in]{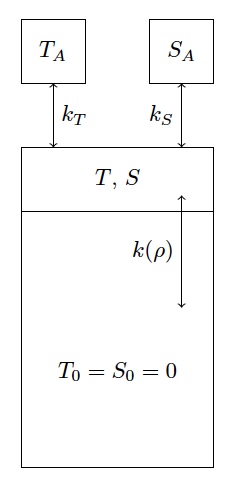}
  \caption{Setup of the Welander Box Model.}
  \label{fig:WelanderBoxModel}
\end{figure}

For already applied examples, we turn to \ref{bk:Leifeld}, in which J. Leifeld studied bifurcations that result from the Welander box model for ocean circulation, which considers the ocean as two stacked boxes: one for the shallow ocean and one for the deep ocean. The model seeks to understand the circulation between the two. Two attributes are considered - the salinity $S$ and the temperature $T$ in the shallow ocean - as these are the primary attributes in determining density $\rho$, which in turn drives ocean circulation. If the shallow ocean is less dense than the deep ocean, almost no mixing occurs. If the shallow ocean is more dense, then circulation will occur at a rate determined by the difference in density. The model is constructed, assuming that temperature $T_0$ and salinity $S_0$ in the deep ocean are constant. For simplicity, we set them to 0 (since we are only concerned with how they change). Then
	\begin{align*}
		\frac{dT}{dt}  =& k_T(T_A-T)-k(\rho)T \\
		\frac{dS}{dt} =&  k_S(S_A - S) - k(\rho)S\\
		\rho =& -\alpha T+\gamma S.
	\end{align*}
where $T_A$ and $T_S$ account for atmospheric forcing terms. Obviously, these only interact with the shallow ocean. A lot is accounted for in the convection function $k(\rho)$. 

\begin{figure}
\centering
\subfigure[Tangencies resulting at the boundary in the Welander model.]
{
	\includegraphics[height=2in]{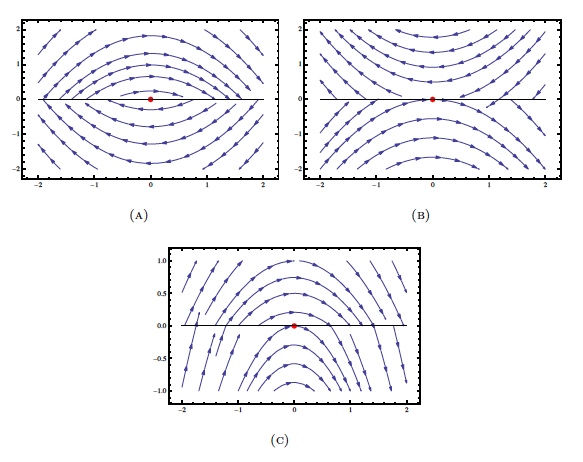}\label{fig:WelanderTangencies}
}
\subfigure[Fused focus bifurcation studied by Leifeld.]
{
	\includegraphics[height=2in]{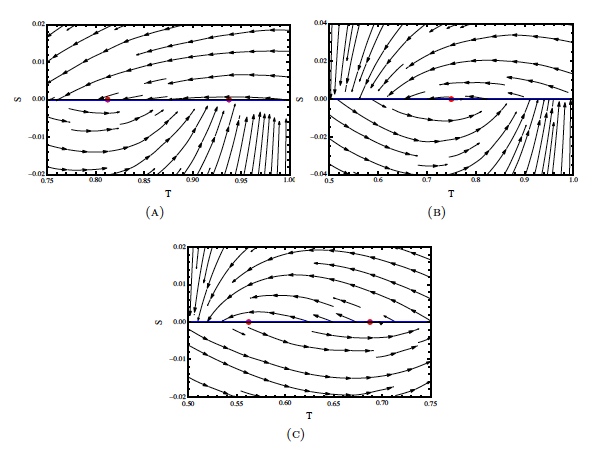}\label{fig:WelanderFusedFocus}
}
\caption{Filippov system studied by Leifeld in \ref{bk:Leifeld}}
\end{figure}

For more details, see \ref{bk:Leifeld}. We focus on this example because it is an example of a Filippov system. For a rigorous definition, see \ref{bk:Filippov}. Leifeld guides the reader through the various changes of variables and values of parameters which produce the sliding regions and fused focus bifurcation pictured in Figures \ref{fig:WelanderTangencies} and \ref{fig:WelanderFusedFocus}.

Recent work by C. Thieme proved that Filoppov systems are in fact a subset of multiflows. R. McGehee then proved that any system defined by a continuous vector field results in solutions which are multiflows. 

There is also a result by K. J. Meyer, who was studying a particular framework with applications to ecology and climate modeling. Let $x\in X$, a compact space and consider the differential equation
\begin{align}\dot{x}=&f(x)+g(x),\end{align}\label{eq:KateODE}
where $f$ is locally Lipschitz continuous. Then the system of solutions
\begin{align*}     \Phi=\{(t&,x,y)\in[0,\infty)\times X\times X : \hbox{ there is some $g$ with }\|g\|_\infty\le r , g\in L^1_{\text{loc}}\cap L^\infty \\
				&\hbox{such that (\ref{eq:KateODE}) has a solution $\chi$ satisfying }\chi(0)=x,~\chi(t)=y\}.
\end{align*}
is also a multiflow. Its motivation comes in the study basins of attraction for systems which might be perturbed or controlled in non-smooth ways (including systems in which errors in measurement might occur).


My own contributions are any theorems in sections \ref{sec:Relns}, \ref{sec:Disp}, \ref{sec:UltBehavior}, \ref{sec:CloseRelns}, and Appendix \ref{sec:AppendixA}, except those which are explicitly labeled as coming from one of my references. In addition to some theorems, which I hope will shed light on the general structure within multiflows, I proved the semicontinuity of attractors for relations, the existence of attractors for multiflows, and the existence of attracting neighborhoods arbitrarily close to those attractors for multiflows.

\newpage
\section{Review of Maps and Flows}\label{sec:MapsAndFlows}

Flows and maps are useful and fairly well understood. For a vector field $\dot{x}=F(x)$, if $F(x)$ is locally Lipschitz, then the solutions will be flows. 
We frequently use maps as tools to understand what happens in a flow after a set period of time. Morally, $f(x)=\phi(T,x)=\phi^T(x)$ defines a map on the point $x\in X$, where $T$ is usually some set time in $\RR$. They are also used in studying systems, which benefit from a discrete perspective. Examples include populations of animals or measurement of sea ice, which have distinct seasons of growth. While we are guaranteed the ability to flow forward and backward in time, in each path of our flow, the same cannot be said for all maps. That is, even if $f$ is a map, there is no guarantee it has an inverse.

With that in mind, we come to the need to refer to the non-negative halves of our time domains. We use the convention that $\ZZ_{\ge0}=\{0,1,2,\ldots\}$ and $[0,\infty)$ refers to the non-negative interval in $\RR$. 
We use the classic setting for dynamical systems - systems with forward uniqueness. Later we will focus on the analogs with respect to relations and multiflows. 

\begin{defn}
Let $\vp:\RR\times X\rightarrow X$ be a {\bf flow} if it is a function that is continuous in both variables and satisfies
\begin{enumerate}
	\item $\vp(0,x) = x$ (and $x$ is therefore called the {\bf initial point}.),
	\item $\vp(s+t,x)=\vp\left(s, \vp(t,x)\right)=\vp\left(t, \vp(s,x)\right)$,
\end{enumerate}
for all $s,t\in\RR$, $x\in X$. Also, let $\vp^t(x):=\vp(t,x)$. 
\end{defn}

\begin{defn} Let $\vp:[0,\infty)\times X\rightarrow X$ be a {\bf semiflow} if it is a function that is continuous in both variables and satisfies
\begin{enumerate}
	\item $\vp(0,x)=\vp_0(x) = x$,
	\item $\vp(s+t,x)=\vp\left(s, \vp(t,x)\right)=\vp\left(t, \vp(s,x)\right)$,
\end{enumerate}
for all $s,t\in[0,\infty)$, $x\in X$. \end{defn}

Note that a semiflow is defined for forward time, but not necessarily for negative $t$-values, or backward time. This makes a semiflow a little more similar to a (possibly noninvertible) map than a flow is. 

We might also use the notation $\vp^t: X\rightarrow X$, which is defined by the image of each $x\in X$, after following the flow through time $t\in[0,\infty)$. Thus, we may speak of flows in terms maps. In fact, we call $\vp^t$ a {\it fixed time map}.

Flows have been largely studied because of their usefulness as solutions of differential equations, but flows have also been studied in their own right. For instance, not every flow comes as the solution to a smooth vector field. 
Yet, those flows are studied, too. The objects themselves have proven to be interesting. 

We now define the discrete side of things. Maps are useful objects of study for a few reasons: they better represent discrete systems, and they help us understand some flows better. That is, we can break flows into their fixed time maps.

\begin{defn}
Let $f:X\rightarrow X$ be a function. We call $f$ a {\bf map}. Then we establish the following notation:
\begin{itemize}
	\item $f^0=\id$, the identity map, and
	\item $f^{n+1}=f\circ f^n = f^n\circ f$, for all $n\in\NN$.
\end{itemize}
\end{defn}

As a consequence, we get $f^{n+m}=f^n\circ f^m$. The notation for flows and maps is intentionally similar. Indeed, there is a similar structure. Given a flow $\vp$ or an invertible map $f$, there are associated families, which form groups:
\[                   \{\vp^t:X\rightarrow X\}_{t\in\RR} \hbox{ and }    \{f^n:X\rightarrow X\}_{n\in\ZZ}.\] 
Composition is the group action. If $\vp$ is only a semiflow or $f$ is a non-invertible map, then we only get a semigroup and are restricted to $t\in[0,\infty)$ and $n\in\ZZ_{\ge0}$, but composition is still the semigroup action.

Notice, however, that if $f$ is not invertible, then $f^n$ is not necessarily a map for $n<0$. The asymmetry causes complications. This will motivate several new definitions, when we try to generalize the notion of {\it invariance} to relations and multiflows. We do not discuss the complications much here, but additional information can be found in \ref{bk:McGeheeAttractors}.

\subsection{Orbits}

We are frequently concerned with one specific solution (or a collection of specific solutions). Starting at a particular point, what values can be obtained? In what order are they obtained? We therefore need the concept of an {\it orbit}. 

\begin{defn}
Let $\vp: \RR\times X\rarrow X$ be a flow, and let $x\in X$ be an initial point. The {\bf orbit of $\vp$ containing $x$} is the set $O_{\vp,x}=\bigcup\limits_{t\in\RR}\vp(t,x),$ with the obvious ordering (on $t$).
\end{defn}

We preserve the {\it order} in which points were obtained, but the orbits themselves are just collections of points. If there is some $y\in X$ where $\vp(t,x)=y$ for some $t$, then $O_{\vp,x}=O_{\vp,y}$. A result of uniqueness in flows is that for any two orbits $\vp(t,x)$ and $\vp(t,y)$, they are either exactly equal, or their intersection is empty.

One wonders how much of the rich structure of flows can be preserved, if one removes the necessity that a flow be a function. We shall look more closely later, when we define multiflows (Section \ref{sec:Disp}).

On the discrete side we define orbits for maps.

\begin{defn}
Let $x\in X$ be a fixed point, and let $f$ be a map (with or without inverse). Then let $x_0=x$ and define the {\bf orbit of $f$ through $x$} to be the sequence
		\[ O_{f,x}= \left\{ \begin{array}{ll} \{\ldots,x_{-1},x_0,x_1,\ldots\}, & \hbox{if }f\hbox{ is invertible, or }\\
								\{x_0,x_1,\ldots\}, & \hbox{if }f\hbox{ is not invertible}, \end{array}\right.\]
where $x_{n+1}=f(x_n)$.
\end{defn}

We bring this up to contrast it with the notions of orbits and streams for relations and multiflows.

\subsection{Fixed Points and Periodic Orbits}\label{sec:FixedPeriodic}

One of the key features in any dynamical system is the invariant set. Indeed, we use these sets to understand the overall structure of a flow or map. Between them are gradient like flows (or discrete connecting orbits in the case of maps). We will get to a formal definition of an invariant set (Section \ref{sec:UltBehaviorRelns}), but first we consider the most common examples: periodic orbits and fixed points.

\begin{defn}\label{orbitflow}
If $\vp:X\times \RR \rarrow X$ is a flow, then there is a {\bf periodic orbit} of length $t>0$ if $\vp^t(x)=x$, and $t$ is the smallest positive number for which this happens. That orbit is $\{\vp^s(x) : 0\le s<t\}.$
\end{defn}

Actually, because of the nature of a periodic orbit, and by the uniqueness property of flows, if $\vp$ has a periodic orbit of length $t$ through the point $x$, then
	\[ \bigcup\limits_{s\in\RR} \vp^s(x) = \{\vp^s(x) : 0\le s<t\}.\]
That is, all of a periodic orbit is accounted for in a single trip around the orbit. In Example \ref{ex:rotnPerOrb}, we consider one of the simplest examples that results in periodic orbits.

\begin{defn}
A {\bf fixed point} for a flow $\vp$ is a point $x\in X$ such that $\vp^t(x)=x$ for all $t\ge0$. 
\end{defn}

\begin{ex}\label{ex:rotnPerOrb}
Let $\vp: \RR\times \RR^2 \rarrow \RR^2$ be the solution to
	\[ \left[\begin{array}{c} \dot{x} \\ \dot{y} \end{array}\right] = \left[\begin{array}{cc} 0 & 1 \\ -1 & 0 \end{array}\right]\left[\begin{array}{c} x \\ y \end{array}\right].\] 
	Then
	\[\vp^t(x_0,y_0)=\left[\begin{array}{c} x \\ y \end{array}\right] = \left[\begin{array}{cc} \cos t & \sin t \\ -\sin t & \cos t \end{array}\right]\left[\begin{array}{c} x_0 \\ y_0 \end{array}\right].\] 
	A few orbits are displayed in Figure \ref{fig:rotnPerOrb}. If $(x_0,y_0)=(0,0)$, we have a fixed point (equilibrium). If $(x_0,y_0)\neq(0,0)$, then the orbit containing this point is a circle, centered at the origin, and its length is $2\pi$.
	
\begin{figure}\centering
  \includegraphics[width=.4\linewidth]{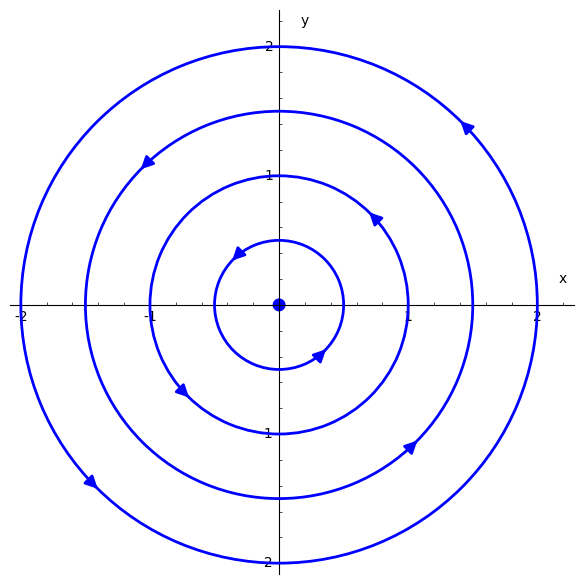}
  \caption{Orbits of the solution in Example \ref{ex:rotnPerOrb}.}
  \label{fig:rotnPerOrb}
\end{figure}
\end{ex}

\begin{defn}\label{orbitmap}
If $f:X\rightarrow X$ is a map, then there is a {\bf periodic orbit} of length $n\in \NN$ if $f^n(x)=x,$ and $n$ is the smallest positive integer for which this happens. The orbit itself is $\{f^m(x) : 0\le m<n\}.$
\end{defn}

\begin{defn}
A {\bf fixed point} for a map $f$ is a point $x\in X$ such that $f(x)=x$. 
\end{defn}

In other words, a fixed point for a map $f$ has an orbit of length 1. Whereas, $x\in X$ is a fixed point for the flow $\vp$ if there is no smallest $t>0$ for which $\vp^t(x)=x$. In this case, $\vp^t(x)=x$ for all $t\in\RR$. That is, fixed points can be viewed as a special case of a periodic orbit. Fixed points are also called {\it rest points}, {\it equilibrium points}, or simply {\it equilibria}.

\begin{ex}\label{ex:mapPerOrbit}

Let $f$ act on the set of points $X=\{a,b,c,d,e\}$ by
	\begin{align*}
		f(a)&=b\\
		f(b)&=c\\
		f(c)&=d\\
		f(d)&=e\\
		f(e)&=c.
	\end{align*}
This is more easily seen in the illustration Figure \ref{fig:MapEx}.

	\begin{figure}[h]\centering
		\includegraphics[width=1.7in]{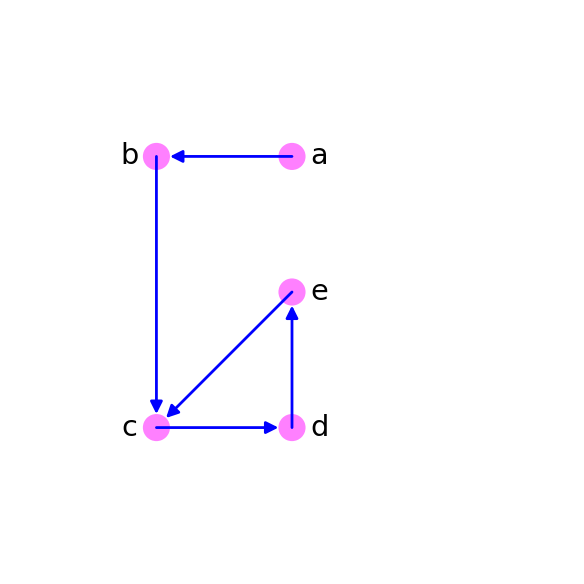}
  		\caption{Illustration of the map $f$ from Example \ref{ex:mapPerOrbit}.}\label{fig:MapEx}
	\end{figure}

There is a periodic orbit of length 3:
	\[f(c)=d,~ f^2(c)=e,~f^3(c)=c.\]
Recall that any point in a period orbit can be used as the starting point, and the period will have the same length. Thus, the orbit through $d$ is the same orbit and has the same length: $f^n(d)=d \iff 3|n$ (and $n\ge0$ because $f$ is not invertible).

Because $a$ and $b$ {\it eventually} (after enough iterations of $f$) map to one of $c,~d,$ or $e$, nothing in the system escapes this orbit. We will revisit this example when we discuss invariant sets in more depth. 
\end{ex}

\newpage
\section{Relations}\label{sec:Relns}

We begin by comparing relations to maps. A {\em map} $f:X\rarrow X$ sends each $x\in X$ to a unique element $y\in X$, or $f:x\mapsto y$. Another way of thinking is that $f$ sends each $x\in X$ to a one-element subset $\{y\}$ of $X$. The graph of any map is a relation, and indeed the graph of any continuous map is a closed relation. If $f$ is invertible then $f^{-1}$ is also a map; the graph of $f^{-1}$ is also a relation.

We expand this notion to incorporate map-like objects in which the image of $x$ may be any subset of $X$.

\begin{defn} A {\it relation} $f$ on a space $X$ is a subset $f\subset X\times X$.
\end{defn}

We typically deal with closed relations $f\overset{cl}{\subset} X\times X$, as graphs of continuous maps are closed relations.
%
A map $f$ is just a relation such that for every $x\in X$, there exists a unique $y\in X$ such that $(x,y)\in X$. We usually regard $y$ as the image of $x$ under $f$. For dynamical systems which lack uniqueness, we simply take away that restriction. Thus, relations are a natural generalization of maps. Many of the properties associated with maps carry over. Many do not. 

\begin{defn} Let $f$ be a relation\footnote{We use $f$, despite the fact that these rarely correspond to functions. This is for historic purposes and to encourage a perspective, which treats relations as set maps.} on $X$, and let $\subset X$. The {\it image} of S under $f$ is the set
\[ f(S)\equiv \{y\in X: \hbox{ there is some }x\in S\hbox{ satisfying }(x,y)\in f\}.\]
\end{defn}

As with all modern mathematics, we stand on the shoulders of giants. This section owes its existence to \ref{bk:McGeheeAttractors}. My knowledge of relations (in the context of dynamical systems) comes almost exclusively from that paper and its author. I use the definitions and theorems from that paper because they are the foundation for my work with multiflows. I am skimming, using only the tools I need. Anyone wishing to study topological features of discrete time dynamical systems with non-uniqueness in forward time should turn first to that paper.

We will use the conventions: for a relation $f$ on $X$ and $x,y\in X$,
 \begin{itemize}
 \item $f(x)\equiv f\left(\{x\}\right),$ and
 \item $y\in f(x) \iff (x,y)\in f.$
 \end{itemize}

The following general results in relations will prove invaluable. See \ref{bk:McGeheeAttractors} for their proofs.

\begin{lem}[Lemma 1.1 in \ref{bk:McGeheeAttractors}]\label{lem:relninclusion} If $f$ and $g$ are relations on $X$ with $S,T\subset X$, then
\begin{enumerate}
\item $S\subset T\implies f(S)\subset f(T)$ and
\item $f\subset g\implies f(S)\subset g(S)$.
\end{enumerate}
\end{lem}

\begin{lem}[Lemma 1.4 in \ref{bk:McGeheeAttractors}]\label{lem:relnUandInt} If $f$ is a relation on $S$ and if $\mf{S}$ is a set of subsets of $X$, then
\begin{enumerate}
\item $f\left(\bigcup\limits_{S\in\mf{S}}S\right)         =\bigcup\limits_{S\in\mf{S}}f(S)$
\item $f\left(\bigcap\limits_{S\in\mf{S}}S\right)\subset\bigcap\limits_{S\in\mf{S}}f(S)$
\end{enumerate}
That is, $f$ preserves unions, but not intersections.
\end{lem}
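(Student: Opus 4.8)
The plan is to treat the two parts separately, since the first is an equality and the second only an inclusion, and both reduce to unwinding the definition of the image together with the monotonicity already recorded in Lemma~\ref{lem:relninclusion}. Throughout, the key observation is that membership $y \in f(S)$ is an \emph{existential} statement: it holds precisely when there exists some $x \in S$ with $(x,y)\in f$. The union case is exactly where this existential quantifier behaves well, while the intersection case is where it degrades to a one-sided inclusion.

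For part (1) I would prove the two inclusions. For $\subseteq$, suppose $y \in f\big(\bigcup_{S\in\mf{S}} S\big)$. Then there is some $x \in \bigcup_{S\in\mf{S}} S$ with $(x,y)\in f$, so $x$ lies in some particular $S_0 \in \mf{S}$; hence $y \in f(S_0) \subseteq \bigcup_{S\in\mf{S}} f(S)$. The reverse inclusion is immediate from monotonicity: each $S_0 \in \mf{S}$ satisfies $S_0 \subseteq \bigcup_{S\in\mf{S}} S$, so Lemma~\ref{lem:relninclusion}(1) gives $f(S_0) \subseteq f\big(\bigcup_{S\in\mf{S}} S\big)$, and taking the union over $S_0$ yields $\bigcup_{S\in\mf{S}} f(S) \subseteq f\big(\bigcup_{S\in\mf{S}} S\big)$.

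For part (2), only the inclusion is claimed, and it again follows from monotonicity: since $\bigcap_{S\in\mf{S}} S \subseteq S_0$ for every $S_0 \in \mf{S}$, Lemma~\ref{lem:relninclusion}(1) gives $f\big(\bigcap_{S\in\mf{S}} S\big) \subseteq f(S_0)$ for each $S_0$, and intersecting over all $S_0$ produces $f\big(\bigcap_{S\in\mf{S}} S\big) \subseteq \bigcap_{S\in\mf{S}} f(S)$. Equivalently, unwinding directly: if $y \in f\big(\bigcap_{S\in\mf{S}} S\big)$ with witness $x \in \bigcap_{S\in\mf{S}} S$ satisfying $(x,y)\in f$, then $x \in S$ for every $S$, so $y \in f(S)$ for every $S$, i.e. $y \in \bigcap_{S\in\mf{S}} f(S)$.

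In truth there is no serious obstacle here: the entire statement is a routine manipulation of the existential quantifier in the definition of image, and both halves can even be obtained as corollaries of the monotonicity lemma alone. The only conceptually interesting point, which I would flag in a remark rather than belabor in the proof, is \emph{why} equality fails in part (2): a point $y \in \bigcap_{S\in\mf{S}} f(S)$ is guaranteed a preimage $x_S \in S$ in each set separately, but these witnesses may differ from set to set and need share no common element of $\bigcap_{S\in\mf{S}} S$. This is precisely the failure of an existential quantifier to commute with intersection, and it is the reason a relation preserves unions but not intersections.
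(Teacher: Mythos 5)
Your proof is correct and complete: both halves of part (1) and the one-sided inclusion in part (2) follow exactly as you argue, and your closing remark correctly identifies why equality fails for intersections (the witnesses $x_S$ need not coincide). Note that the paper itself supplies no proof of this lemma---it is quoted as Lemma 1.4 of \ref{bk:McGeheeAttractors} with proofs explicitly deferred to that reference---and your argument, unwinding the existential quantifier in the definition of the image together with the monotonicity of Lemma \ref{lem:relninclusion}, is precisely the standard one, so there is nothing to compare or add.
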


\begin{ex}\label{ex:RelnsDoNotPresIntn}
Let $X=\{1,2,3\}$ and let $A=\{1,2\}$, $A'=\{2,3\}$, and let $f$ be the relation on $X$ such that
	\begin{align*}
		f(1)=&\{1,2\}\\
		f(2)=&\es\\
		f(3)=&\{2,3\}.
	\end{align*}
Then, $A\cap A'=\{2\}$, and $f(A\cap A')=\es\subsetneq \left(f(A)\cap f(A')\right) = \{2\}.$
\end{ex}

There are two properties from basic set theory that will also be useful. We combine them into one lemma. In \ref{bk:McGeheeAttractors}, this was Lemma 2.6. We use the shorthand notation: if $\mathfrak{S}$ is a set of subsets $\{S\}$ of a set $X$, then \[\bigcap \mathfrak{S}\equiv\bigcap\limits_{S\in\mathfrak{S}} S.\]

\begin{lem}{Theorem 2.6 in \ref{bk:McGeheeAttractors}}\label{lem:intersections}
If $\mathfrak{S}$ and $\mathfrak{T}$ are sets of subsets of a set $X$, then the following hold:
\begin{enumerate}
	\item If $\mathfrak{S}\subset\mathfrak{T}$, then $\bigcap\mathfrak{T}\subset\bigcap\mathfrak{S}$, and
	\item If for every $T\in\mf{T}$ there is some $S\in\mf{S}$ such that $S\subset T$, then $\bigcap\mf{S}\subset\bigcap\mf{T}.$
\end{enumerate}
\end{lem}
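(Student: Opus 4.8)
The plan is to prove both inclusions by direct element-chasing, unwinding the shorthand $\bigcap\mathfrak{S}=\bigcap_{S\in\mathfrak{S}}S$, so that the assertion $x\in\bigcap\mathfrak{S}$ means exactly that $x\in S$ for every $S\in\mathfrak{S}$. Both clauses are pure set theory and use no properties of relations; the only genuine care needed is bookkeeping of the quantifiers together with a convention for the empty family.

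For part (1), I would fix an arbitrary $x\in\bigcap\mathfrak{T}$, so that $x\in T$ for all $T\in\mathfrak{T}$. Because $\mathfrak{S}\subset\mathfrak{T}$ means every member of $\mathfrak{S}$ is also a member of $\mathfrak{T}$, each $S\in\mathfrak{S}$ satisfies $S\in\mathfrak{T}$, whence $x\in S$. As $S$ was arbitrary, $x\in\bigcap\mathfrak{S}$, which gives $\bigcap\mathfrak{T}\subset\bigcap\mathfrak{S}$. The conceptual content is the order reversal: enlarging the collection of sets being intersected can only shrink or preserve the intersection.

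For part (2), I would fix $x\in\bigcap\mathfrak{S}$ and an arbitrary $T\in\mathfrak{T}$, with the goal of showing $x\in T$. The hypothesis produces some $S\in\mathfrak{S}$ with $S\subset T$; since $x\in\bigcap\mathfrak{S}$ forces $x\in S$, the inclusion $S\subset T$ then yields $x\in T$. As $T$ was arbitrary, $x\in\bigcap\mathfrak{T}$, so $\bigcap\mathfrak{S}\subset\bigcap\mathfrak{T}$. I would also observe that part (1) is in fact the special case of part (2) obtained by interchanging the roles of $\mathfrak{S}$ and $\mathfrak{T}$ and taking $S=T$: if $\mathfrak{S}\subset\mathfrak{T}$, then for each $S\in\mathfrak{S}$ the set $S$ itself lies in $\mathfrak{T}$ and satisfies $S\subset S$, so the part (2) argument applies. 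This lets one present the two clauses as a single element-chase if desired.

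The one subtlety, and the only place I expect to exercise caution, is the empty-family case, which I would handle by adopting the convention $\bigcap\es=X$ (the intersection taken over the ambient set). With this convention the edge cases fall out automatically: in part (1) an empty $\mathfrak{S}$ gives $\bigcap\mathfrak{S}=X$, which contains everything; in part (2) a nonempty $\mathfrak{T}$ paired with an empty $\mathfrak{S}$ makes the hypothesis vacuously unsatisfiable, so the implication holds trivially. No deeper obstacle arises.
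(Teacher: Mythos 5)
Your proof is correct: both parts are handled by the standard element-chase, the quantifier bookkeeping is right, and your observation that part (1) is the special case of part (2) with the roles of $\mathfrak{S}$ and $\mathfrak{T}$ interchanged (taking $S=T$) is valid. Note that the paper itself gives no proof of this lemma --- it is imported verbatim from McGehee's paper as background material --- so there is nothing to compare against beyond saying that your argument is the standard one any source would use; your care with the empty-family convention $\bigcap\es=X$ is a reasonable extra precaution, though the paper never relies on that edge case.
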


Before moving on, we establish one more lemma, to do with the closedness of images under relations.

\begin{lem}\label{thm:closedImages}
If $f$ is a closed relation over a compact space $X$, then $f$ acts as a closed multivalued map. That is, if $S\cl{\subset}X$, then $f(S)\cl{\subset}X$.
\end{lem}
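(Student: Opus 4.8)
The plan is to recognize $f(S)$ as a projection of an intersection and then exploit compactness. Observe that
\[ f(S) = \pi_2\bigl((S\times X)\cap f\bigr), \]
where $\pi_2:X\times X\rarrow X$ is the (continuous) projection onto the second coordinate. Since $X$ is a compact metric space, I would work sequentially: to show $f(S)$ is closed, I take an arbitrary sequence $(y_n)$ in $f(S)$ with $y_n\rarrow y\in X$ and argue that $y\in f(S)$.

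First I would unpack the definition: for each $n$ there is some $x_n\in S$ with $(x_n,y_n)\in f$. Because $S\cl{\subset}X$ and $X$ is compact, $S$ is itself compact, so the sequence $(x_n)$ has a subsequence $(x_{n_k})$ converging to some $x\in S$. Then the pairs $(x_{n_k},y_{n_k})$ converge to $(x,y)$ in $X\times X$, and since $f\cl{\subset}X\times X$ is closed, the limit satisfies $(x,y)\in f$. With $x\in S$, this exhibits $y$ as an element of $f(S)$, completing the argument. (Equivalently, in purely topological language: $(S\times X)\cap f$ is a closed subset of the compact space $X\times X$, hence compact; its continuous image under $\pi_2$ is therefore compact, and a compact subset of the Hausdorff space $X$ is closed.)

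The content here is light, but I expect the only real subtlety to be bookkeeping about \emph{where compactness is actually used}, since the statement is false without it. Compactness enters in exactly one essential place --- extracting the convergent subsequence $(x_{n_k})$ from $S$ --- and it is precisely this step that lets the closedness of $f$ be applied to a genuine limit point. I would flag that if $X$ (or $S$) failed to be compact, the preimages $x_n$ could escape to infinity and the projection of a closed set need not be closed, so the hypothesis that $X$ is compact cannot be dropped.
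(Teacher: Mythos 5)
Your proposal is correct and takes essentially the same approach as the paper: both identify $f(S)=\pi_2\bigl(f\cap(S\times X)\bigr)$ and use compactness of $X$ to conclude that this projection is closed. The only difference is cosmetic --- you verify closedness by extracting a convergent subsequence (valid in the metric setting), while the paper directly cites that projections along a compact factor are closed maps, which is exactly the content of your parenthetical topological argument (with the mild extra assumption that $X$ is Hausdorff).
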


\begin{proof}
We begin by thinking of $f\subset X\times X$ in its graphical form. Then, we may speak of projections, say $\pi_1$ to project to the first copy of $X$ and $\pi_2$ to project to the second, as described in detail in \ref{bk:McGeheeAttractors}. So, considering the images of $f$ is to use $\pi_2(f)$. Projections on compact sets are closed maps.

To see this in further detail, let $S$ be any closed subset of $X$. Then, $S\times f(S)=f\cap(S\times X)$, which is closed, and $f(S)=\pi_2(f\cap(S\times X))$. Again, because projections on compact sets are closed, we are guaranteed that $f(S)$ is closed.
\end{proof}

\subsection{Composition of Relations}

We go forward in discrete time by composing relations the same way one composes maps. That is,
	\begin{align*}
		f^2(x)=&f(f(x)),\\
		f^3(x)=&f(f^2(x)), \hbox{ etc.}
	\end{align*}
\begin{defn}If $f$ is a relation over $X$, then for $n\in\ZZ_{\ge0}$ $f^n$ is also a relation over $X$ defined by
	\[f^0=\id_X=\{(x,x):x\in X\},\]
	\[f^n=\{(x,z): z=f(f^{n-1}(x))\} \hbox{ if }n>0.\]
Therefore, $f^n(x)=\{z: (x,z)\in f^n\}$ and if $S\subset X$, \[f^n(S)=\{z: \hbox{ there is some }x\in S\hbox{ such that }(x,z)\in f^n\}.\]
\end{defn}

While relations do not share all of the properties of maps, there is quite a lot of structure that is preserved in moving to this generalization. In fact, there is a semigroup structure on the compositions of $f$.

\begin{lem}
Let $f$ be a relation over $X$. Then $\{f^n\}_{n\in Z_{\ge0}}^+$ forms a {\bf commutative semigroup}. It is an easy exercise to prove the following properties for $m,n,l\in\ZZ_{\ge0}$:
	\begin{itemize}
		\item $f^0=\id_X$ (by definition)
		\item $f^{m+n}=f^m\circ f^n = f^n\circ f^m$
		\item $f^{l+(m+n)}=f^{(l+m)+n}$
	\end{itemize}
\end{lem}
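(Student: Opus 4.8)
The plan is to derive all three bullet points from a single structural fact: composition of relations, viewed as subsets of $X\times X$, is associative. Once that is in hand, the stated properties reduce to elementary facts about addition in $\ZZp$ together with one induction. Note that the claim is purely algebraic, so the closedness considerations of Lemma \ref{thm:closedImages} play no role here.

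First I would record and prove the general associativity of composition. For relations $f,g,h$ on $X$, I would show $(f\circ g)\circ h=f\circ(g\circ h)$ directly from the graph definition: a pair $(x,w)$ lies in either side exactly when there exist $y,z\in X$ with $(x,y)\in h$, $(y,z)\in g$, and $(z,w)\in f$. Both groupings unwind to this same three-fold existential condition, so the two relations coincide as subsets of $X\times X$. I expect this quantifier chase to be the main (though still routine) obstacle, since it is the only place where one must reason about the sets directly rather than quote an exponent identity.

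Next I would establish the composition law $f^{m+n}=f^m\circ f^n$ by induction on $m$ with $n$ fixed, using the recursion $f^{k+1}=f\circ f^k$ that is immediate from the definition $f^n(x)=f(f^{n-1}(x))$. The base case $m=0$ is $f^{0+n}=f^n=\id_X\circ f^n=f^0\circ f^n$, using $f^0=\id_X$. For the inductive step I would compute $f^{(m+1)+n}=f^{(m+n)+1}=f\circ f^{m+n}=f\circ(f^m\circ f^n)=(f\circ f^m)\circ f^n=f^{m+1}\circ f^n$, where the second equality and the last are instances of the recursion, the third is the inductive hypothesis, and the fourth is associativity. This proves the first equality in the second bullet.

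Finally, commutativity and semigroup associativity fall out of the arithmetic of $\ZZp$. Since $m+n=n+m$, the composition law gives $f^m\circ f^n=f^{m+n}=f^{n+m}=f^n\circ f^m$, completing the second bullet and showing the semigroup is commutative. Likewise, both $f^{l+(m+n)}$ and $f^{(l+m)+n}$ equal $f^{l+m+n}$ because addition in $\ZZp$ is associative, giving the third bullet; alternatively one sees this directly as associativity of composition applied to $f^l,f^m,f^n$. The identity $f^0=\id_X$ holds by definition, so no separate argument is required there.
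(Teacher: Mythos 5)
Your proof is correct and complete. The paper itself offers no proof of this lemma --- it labels the properties an ``easy exercise'' and remarks only that the behavior is inherited directly from the semigroup $\ZZ_{\ge0}$ --- and your argument (graph-level associativity of relation composition, induction on $m$ for $f^{m+n}=f^m\circ f^n$, then reduction to the arithmetic of $\ZZp$) is precisely a careful filling-in of that remark, supplying the one non-arithmetic ingredient (associativity of $\circ$) that the paper's hint leaves implicit.
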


The relation inherits this behavior directly from the semigroup $\ZZ_{\ge0}$.
We may also fix $x$ and define an orbit, which is definitely different for a relation than for a map. We will need to deal with two notions: an orbit and a stream.

\begin{defn}\label{defn:StreamReln}
Let $x\in X$ be a fixed point, and let $f$ be a relation. Then define the {\bf forward stream of $f$ at $x$} (usually just called the stream of $f$ at $x$) to be the sequence of subsets
		$$O_{f,x}=\{x,f(x),f^2(x),\ldots\}.$$
\end{defn}

\begin{rmk} Note that we are including {\bf all} of the possible paths of $x$ in these streams. We may want to follow a specific path, or an orbit. Before exploring that definition it is worth noting that if $x_i\in f^i(x)$, then $f(x_i)\subset f(f^i(x))=f^{i+1}(x)$ by Lemma \ref{lem:relninclusion} (1).
\end{rmk}

\begin{defn}
Let $x\in X$ and $f$ a relation on $X$. Then $O_{f,x}$ is defined as in Definition \ref{defn:StreamReln}. This stream potentially contains several choices of paths, or orbits. A {\bf forward orbit} (usually just called an orbit) is defined as a sequence:
	\[ \{x_0=x,x_1,x_2,\ldots\},\] 
where $x_{i+1}\in f(x_i)$ for all $i\ge0$.
\end{defn}

In the case where $f$ is a map, there is only one forward orbit, so a stream and an orbit naturally condense into one object.
We wait until Section \ref{sec:RelnTransposes} to define {\it backward orbit} and {\it backward stream} because it requires knowledge of {\it $f$-transpose}.

\begin{defn}\label{defn:relnsub-orbit}
Let $x\in X$ and $f$ a relation on $X$. Let $O_{f,x}$ be the stream at $x$. Let $y\in f^i(x)$ for some $i\ge0$. Then $O_{f,y}$, the stream at $y$, is a {\bf sub-stream} of $O_{f,x}$.
\end{defn}

A sub-orbit practically defines itself, given what we know of sub-orbits for maps and of sub-streams. 

\begin{defn}Let $\{x_0,x_1,x_2,\ldots\}$ be an orbit under $f$ at $x=x_0$, and let $y=x_i$ for some $i\ge0$. Then the {\bf sub-orbit} at $y$ is $\{y=x_i,x_{i+1},x_{i+2},\ldots\}$.
\end{defn}

\begin{ex}\label{ex:RelnOrbitPath}
Let $X=\{1,2,3\}$ and set $f=\{(1,1),(1,2),(2,1),(2,3),(3,3)\}$. Then the stream at 1 is
	\[O_{1,f}=\left\{ 1, \{1,2\}, \{1,2,3\},\{1,2,3\},\ldots\right\}.\]
Within this stream, the following are orbits:
	\[ \{1,2,1,1,\ldots\}, \{1,2,1,2,\ldots\},\{1,2,3,3,\ldots\}.\]
The following are not orbits:
	\[ \{1,2,2,\ldots\}, \hbox{ or } \{2,1,3,\ldots\}\]
because, while $2, 3\in f^2(1)$, $2\notin f(2)$ and $3\notin f(1)$. Those choices are not compatible.

\begin{figure}\centering[h]
		\includegraphics[width=1.7in]{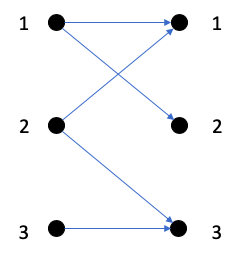}
  		\caption{Illustration of the relation $f$ from Example \ref{ex:RelnOrbitPath}.}\label{fig:RelnStreamEx}
	\end{figure}

\end{ex}

By defining orbits in an iterative way, we guarantee that each element of the path sequence exists in a compatible sub-stream. 

\subsection{Transposes}\label{sec:RelnTransposes}

For an invertible map $f$, $f^{-1}$ is a well-defined map. If $f$ is a map (not necessarily invertible), $f^{-1}$ is still a relation, known as the {\it inverse image} of $f$. Recall that a map may be regarded as a special case of a relation. So we define the inverse image in the following way.

\begin{defn}
Let $f$ be a relation on $X$ and $S\subset X$. Then the {\em inverse image} of $S$ is the set
		\[f^{-1}(S)\equiv\{x\in X:f(x)\subset S\}.\]
\end{defn}

If $f$ is a map, this is compatible with the usual definition. However, if $f$ is a relation, then $f^{-1}$ is not necessarily a relation, as Example \ref{ex:InvNotReln} demonstrates. Thus, it is not the natural object to use for dealing with ``backward time" in relations. Instead we will use $f^*$.

\begin{defn}
Let $f$ be a relation on $X$. Then the {\bf transpose of $f$} (or $f$-transpose) is
	\[ f^* = \{ (y,x) : ~ (x,y)\in f\}.\]
\end{defn}

The same conventions carry over to the transpose. For instance,
	\begin{align*}
		f^*(x)&=\{y:~ (x,y)\in f^*\}\\
			&=\{y:~(y,x)\in f\}, \hbox{ and }\\
		f^*(S)&=\{y:~ (s,y)\in f^*\hbox{ for some }s\in S\}\\
			&=\bigcup\limits_{s\in S} f^*(s).
	\end{align*}

\begin{ex}\label{ex:InvNotReln} This example comes from \ref{bk:McGeheeAttractors}. Let $X=\{0,1\}$ be 		the space, \[f=\{(0,0),(0,1),(1,0)\}\subset X \times X\] a relation, illustrated by Figure \ref{fig:InvNotReln}. Let 	$S_0=\{0\}$ and $S_1=\{1\}$ be subsets 	of $X$ and note that
	\[ f^{-1}(S_0\cup S_1) = f^{-1}(X)=X, \hbox{ but } f^{-1}(S_0)\cup f^{-1}(S_1)= \{1\}\cup\emptyset=\{1\}. \]
	Specifically, we see that $0\notin f^{-1}(S_0)$ because $f(0)=\{0,1\}\not\subset S_0$. Thus, $f^{-1}$ fails to 	preserve unions, contradicting Lemma \ref{lem:relnUandInt}. However, $f^*$ behaves in a less surprising 		way:
	\[f^*(S_0\cup S_1)=\{0,1\}=X, \hbox{ and } f^*(S_0)\cup f^*(S_1) = \{0,1\}\cup \{0\} = X.\]

	\begin{figure}\centering
	    \subfigure[as relation on $X\times X$]
	    {
	        \includegraphics[width=.35\linewidth]{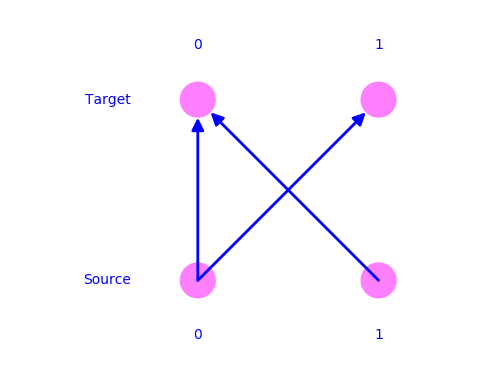}
	        \label{fig:INR2}
	    }
	    \subfigure[as set map on $X$]
	    {
	        \includegraphics[width=.35\linewidth]{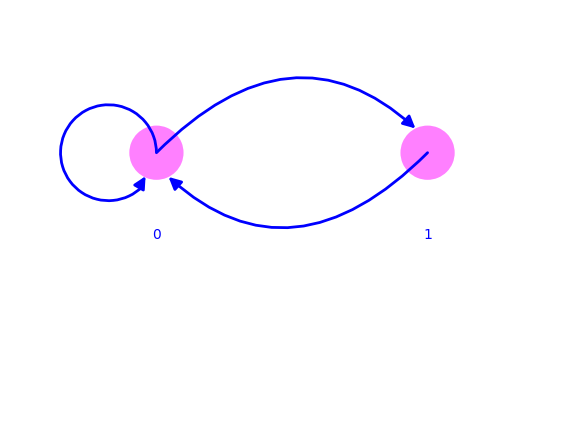}
	        \label{fig:INR1}
	    }
	    \caption{Illustrations of the relation $f$ from Example \ref{ex:InvNotReln}.}
	    \label{fig:InvNotReln}
	\end{figure}
\end{ex}

In understanding why the transpose is the more natural object for ``backward time," we return to the graphical interpretation of a relation. Recall that the graph of a map is a closed relation. If a map $F$ is invertible, and the relation $f$ gives the graph of $F$, then the graph of $F^{-1}$ will be $f^*$.
Note that $(f^*)^n=(f^n)^*$, and because $f^*$ is a relation $\{(f^*)^n\}_{n\in\ZZ_{\ge0}}^+$ forms a semigroup.

The transpose and inverse image of a relation are related in the following way.
\begin{thm}[Lemma 1.9 in \ref{bk:McGeheeAttractors}]\label{TransposeInvIm}
Let $f$ be a relation on a space $X$ and let $S\subset X$. Then
	\[ f^*(S)^c=f^{-1}(S^c).\]
\end{thm}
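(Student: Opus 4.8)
The plan is to establish the set equality $f^*(S)^c = f^{-1}(S^c)$ by showing that both sides admit the same pointwise description, via a single chain of logical equivalences. The only genuine content is reinterpreting the transpose-image $f^*(S)$ as a condition on the forward image $f(y)$; once that translation is in place, the rest is elementary set theory.

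First I would unwind $f^*(S)$. Using $f^*(S) = \bigcup_{s \in S} f^*(s)$ together with the definition $f^*(s) = \{y : (y,s) \in f\}$, a point $y$ lies in $f^*(S)$ exactly when there is some $s \in S$ with $(y,s) \in f$, equivalently with $s \in f(y)$. This yields the pointwise characterization
\[ y \in f^*(S) \iff f(y) \cap S \neq \es. \]
This is the crucial step: it converts the ``backward'' statement about $f^*$ into a ``forward'' statement about $f(y)$, which is precisely what allows the two sides of the claimed identity to meet.

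Taking complements of the displayed equivalence gives $y \in f^*(S)^c \iff f(y) \cap S = \es$. Next I would invoke the basic set-theoretic fact that, for any set $f(y)$, the condition $f(y) \cap S = \es$ holds if and only if $f(y) \subset S^c$. Finally, $f(y) \subset S^c$ is verbatim the defining condition for membership in $f^{-1}(S^c)$, since $f^{-1}(T) = \{x : f(x) \subset T\}$. Concatenating these equivalences produces $y \in f^*(S)^c \iff y \in f^{-1}(S^c)$ for every $y \in X$, which is the asserted equality.

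I do not expect a real obstacle; the argument is a short computation rather than a deep one. The one place that demands care is the very first step: because the image under $f^*$ is written with two equivalent notations, it is easy to misplace a quantifier or to conflate $f^*(y)$ with $f(y)$. I would therefore record the pointwise characterization of $f^*(S)$ explicitly, and verify the orientation of the defining pair $(y,s) \in f$ before proceeding, so that the chain of equivalences remains unambiguous throughout.
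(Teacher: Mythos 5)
Your proof is correct. The paper itself gives no proof of this statement---it is quoted as Lemma 1.9 from McGehee's paper on closed relations---and your chain of equivalences, $y \in f^*(S) \iff f(y) \cap S \neq \emptyset$, followed by complementation and the observation that $f(y) \cap S = \emptyset \iff f(y) \subset S^c$, is exactly the standard argument establishing it, with the quantifier handled correctly and the edge case $f(y) = \emptyset$ causing no trouble.
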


We can now define backward orbits and streams.

\begin{defn}\label{defn:RelnBwdOrbStr}
Let $f$ be a relation on a space $X$, define the {\bf backward stream of $f$ at $x$} to be the sequence of image sets
		$$O_{f^*,x}=\{x,f^*(x),(f^*)^2(x),\ldots\},$$
and let a {\bf backward orbit of $f$ at $x$} be a sequence of elements $\{x_0=x,x_{-1},x_{-2},\ldots\}$, where $x_{-(i+1)}\in f^*(x_{-i})$ for all $i\ge0$.
\end{defn}

Because the backward orbits and backward streams are defined as forward orbits and forward streams of another relation (the transpose of the original), we usually drop the ``forward" when discussing forward streams and forward orbits.

\newpage
\section{Multiflows}\label{sec:Disp}

Here we explore the object, which completes the chart - an object which models a dynamical system, potentially lacking in forward uniqueness (like relations), but which works in continuous time (like flows and semiflows). The answer is dispserions.

\begin{center}{\bf Organization of Dynamical Systems}\end{center}
\[\begin{array}{ c || c | c}
  & \hbox{{\bf discrete time}} & \hbox{{\bf continuous time}} \\
  \hline\hline
  \hbox{{\bf forward and }} &&\\ \hbox{{\bf backward uniqueness}} & \hbox{invertible maps} & \hbox{flows} \\
  \hline
  \hbox{{\bf forward uniqueness}} & \hbox{maps} & \hbox{semiflows} \\
  \hline
  \hbox{{\bf neither}}  &  \hbox{relations} & \hbox{multiflows} \end{array}\label{table:maps} \]

Begin by recalling the definition of a flow:
{\it Let $\vp:\RR\times X\rightarrow X$ be a flow if it is a function that is continuous in both variables and satisfies
\begin{enumerate}
	\item $\vp(0,x) = x$ (and $x$ is therefore called the {\bf initial point}.),
	\item $\vp(s+t,x)=\vp\left(s, \vp(t,x)\right)=\vp\left(t, \vp(s,x)\right)$,
\end{enumerate}
for all $s,t\in\RR$, $x\in X$, and we use the notation $\vp^t(x):=\vp(t,x)$. }

We build the continuous time analog of a relation, which must also serve as a generalization of flows and semiflows, allowing for lack of existence / uniqueness in forward time.

\begin{defn}
Let $X$ be a space. 
 We call $\Phi\cl{\subset}[0,\infty)\times X\times X$ a {\bf multiflow}, with the notation $\Phi^t=\{(x,y):~(t,x,y)\in\Phi\}$, if it satisfies
	\begin{enumerate}
		\item $\Phi^0=id,$ and
		\item $\Phi^{s+t}=\Phi^s\circ\Phi^t,$
	\end{enumerate}
for $s,~t\ge0$.
\end{defn}

While first encountering multiflows - especially if one is used to dealing with the well-behaved nature of flows - there are some seemingly pathological behaviors. For instance, is the semigroup property and $\Phi^0=id_X$ enough to guarantee a certain kind of continuity? Is it enough to guarantee we don't have ``gaps" - sections of an orbit with empty images, followed by non-empty images? We have begun to answer these questions.

\begin{thm}
Let $X$ be a space, and let $\Phi\subset [0,\infty)\times X\times X$ be a multiflow over $X$. Let $S\subset X$ (and note that $S$ may be a single point). If there is any $s\in[0,\infty)$ for which $\Phi^s(S)=\es$, then $\Phi^t(S)=\es$ for all $t\ge s$.
\end{thm}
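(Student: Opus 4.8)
The plan is to exploit the semigroup property of the multiflow together with the elementary fact that any relation sends the empty set to the empty set. The idea is that once the image of $S$ has become empty at time $s$, every later time $t \geq s$ can be reached by composing with the already-empty stage $\Phi^s$, and nothing can be ``created'' out of nothing.

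First I would fix an arbitrary $t \geq s$ and set $r = t - s$, so that $r \geq 0$. The point of this decomposition is that both $r$ and $s$ lie in $[0,\infty)$, which is exactly the range over which the defining identity $\Phi^{a+b} = \Phi^a \circ \Phi^b$ is available. Applying that identity with $a = r$ and $b = s$ gives $\Phi^t = \Phi^{r+s} = \Phi^r \circ \Phi^s$ as relations on $X$.

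Next I would evaluate both sides on $S$. Unwinding the composition of relations, $\Phi^t(S) = (\Phi^r \circ \Phi^s)(S) = \Phi^r\left(\Phi^s(S)\right)$. By hypothesis $\Phi^s(S) = \es$, so this reads $\Phi^t(S) = \Phi^r(\es)$. The final step is the observation that, directly from the definition of the image of a relation, $\Phi^r(\es) = \{y \in X : \text{there is some } x \in \es \text{ with } (x,y) \in \Phi^r\} = \es$, since there is no $x \in \es$ to witness membership. Hence $\Phi^t(S) = \es$, and since $t \geq s$ was arbitrary, the claim follows.

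I do not expect a genuine obstacle here; the only point demanding a moment's care is the bookkeeping of the composition, namely writing $t = r + s$ with $r \geq 0$ so that the semigroup law applies and lands $\Phi^s$ on the ``inside,'' where the emptiness hypothesis can be used. One could equally write $t = s + r$ and invoke commutativity of the family $\{\Phi^t\}$, but the ordering above avoids even that. The supporting fact---that relations preserve the empty image---is immediate from the definition and needs no separate argument.
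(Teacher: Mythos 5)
Your proposal is correct and is essentially the paper's own proof: both decompose $t = s + r$ with $r \ge 0$, apply the semigroup property to write $\Phi^t(S) = \Phi^r\left(\Phi^s(S)\right)$, and conclude via the fact that any relation sends $\es$ to $\es$. The only cosmetic difference is that the paper names the increment $\alpha$ instead of $r$.
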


\begin{proof}
Let $X$, $S$, $\Phi$ and $s$ be as described. Then for all $t\ge s$, $t=s+\alpha$ for some $\alpha\ge0$, which means
	\[\Phi^t(S)=\Phi^{s+\alpha}(S) = \Phi^\alpha(\Phi^s(S))=\Phi^\alpha(\es)=\es,\]
because the image of the empty set, under a relation, is the empty set.
\end{proof}


\begin{thm}{K. J. Meyer \ref{pc:KJMeyer}} Let $Z$ be a compact metric space and let $K$ be a closed subset of $\mathbb{R}\times Z$. Let $K^t=\{z\in Z: (t,z)\in K \}$. Then the map $M:\mathbb{R}\to \mathcal{P}(Z)$ given by $t\mapsto K^t$ is semicontinuous; that is, given $t\in\mathbb{R}$ and $\epsilon>0$, there exists a $\delta>0$ such that $|s-t|<\delta$ implies $K^s\subset N_\epsilon(K^t)$.\\
\end{thm}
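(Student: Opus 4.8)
The plan is to argue by contradiction, exploiting the sequential compactness of $Z$ together with the closedness of $K$; these are the only two hypotheses in play, and each is used exactly once. Suppose the conclusion fails at some $t\in\RR$ and some $\epsilon>0$. Then no $\delta$ works, so for every $n\in\NN$ there is a real number $s_n$ with $|s_n-t|<1/n$ yet $K^{s_n}\not\subset N_\epsilon(K^t)$. First I would extract, for each $n$, a witness $z_n\in K^{s_n}$ with $d(z_n,K^t)\ge\epsilon$, using the convention $d(z,\es)=+\infty$ so that the degenerate case $K^t=\es$ is absorbed automatically. Since $z_n\in K^{s_n}$ means precisely $(s_n,z_n)\in K$, this produces a sequence $(s_n,z_n)$ in $K$ with $s_n\to t$.

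Next I would invoke compactness. Because $Z$ is a compact metric space it is sequentially compact, so $(z_n)$ admits a subsequence $z_{n_k}\to z^*\in Z$. Along this subsequence $(s_{n_k},z_{n_k})\to(t,z^*)$ in $\RR\times Z$, and since $K$ is closed the limit satisfies $(t,z^*)\in K$, i.e. $z^*\in K^t$. This is the crux: compactness manufactures a convergent subsequence, and closedness of $K$ pins its limit into the target slice $K^t$.

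Finally I would close the loop using continuity of the distance-to-a-set function $z\mapsto d(z,K^t)$. From $d(z_{n_k},K^t)\ge\epsilon$ for all $k$ and $z_{n_k}\to z^*$ we obtain $d(z^*,K^t)\ge\epsilon>0$, which contradicts $z^*\in K^t$ (membership forces $d(z^*,K^t)=0$). The contradiction establishes the theorem.

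I do not expect a serious obstacle here, as this is the standard upper-semicontinuity-of-slices fact; the only care needed is bookkeeping. The empty-slice case is handled uniformly by the convention $d(\cdot,\es)=+\infty$, and it is worth recording explicitly that compactness of $Z$, not of $\RR$, is the essential hypothesis: dropping it breaks the statement, since $K=\{(1/n,n):n\ge1\}\subset\RR\times\RR$ is closed yet its slice jumps from $K^0=\es$ to nonempty $K^{1/n}=\{n\}$ for $s=1/n$ arbitrarily close to $t=0$.
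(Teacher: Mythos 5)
Your proposal is correct and follows essentially the same argument as the paper: a proof by contradiction that builds a sequence $(s_n,z_n)\in K$ with $s_n\to t$ and $z_n$ outside $N_\epsilon(K^t)$, extracts a convergent subsequence via compactness of $Z$, and uses closedness of $K$ to force the contradiction. The only differences are cosmetic --- you make explicit the continuity of the distance-to-set function and the empty-slice convention, which the paper's proof leaves implicit --- and where exactly the contradiction is drawn (you place the limit inside $K^t$ by closedness and contradict the distance bound, while the paper places it outside $K^t$ and contradicts closedness).
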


\begin{proof} Assume the negation for the sake of contradiction. Suppose that there is a $t\in\mathbb{R}$ and an $\epsilon>0$ such that for all $\delta>0$, there is some $s$ with $|s-t|<\delta$ and also $K^s\not\subset N_\epsilon(K^t)$. We will show that this leads to the conclusion that $K$ does not contain a limit point, contradicting $K$ being closed. \\
\begin{figure}\centering
\includegraphics[scale=0.5]{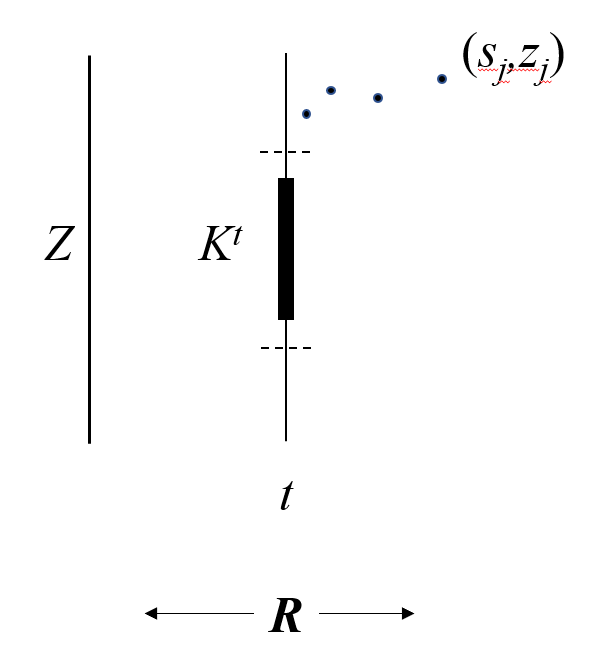}\caption{An example of a convergent subsequence.}
\end{figure}\label{fig:convSubSeq}
Given the assumption, there must be a sequence of points in $K$, $\{(s_j,z_j)\}_{j=1}^\infty,$ such that $|s_j-t|<\frac{1}{j}$ and $z_j\not\in N_\epsilon(K^t)$. Since $Z$ is compact, $\{z_j \}_{j=1}^\infty$ has a convergent subsequence $\{z_{j_n}\}_{n=1}^\infty$ converging to some $z^*\not\in K^t$. Then the sequence of points $\{(s_{j_n},z_{j_n})\}_{n=1}^\infty\subset K$ has $(t,z^*)$ as a limit point. Since $z^*\not\in K^t$, $(t,z^*)\not\in K$. So $K$ does not contain one of its limit points. But this contradicts $K$ being a closed subset of $\mathbb{R}\times Z$. \end{proof}

Note that nothing much changes if we say $Z$ is just a compact topological space. K. J. Meyer proved it for metric spaces, as it was applicable to her research, but we use the more general version, where given a neighborhood $N(K^t)$, we can find a $\delta>0$ such that $|s-t|<\delta$ implies $K^s\subset N(K^t)$. The proof is as above, mutatis mutandis. Let $Z=X\times X$ where $X$ is a compact topological space, and we have upper semicontinuity for multiflows.
Thus, in the same way that a relation can be thought of as a set map: $f:X\rarrow \mathcal{P}(X)$, a multiflow maps in the following way.

\begin{defn}
Let $X$ be a topological space, and let $\clrel(X)$ be the space of closed relations on $X$. Then $\Phi$ is a {\bf multiflow} if
	\[ \Phi: [0,\infty) \xrightarrow{cont.} \clrel(X), \]
	\[\Phi(0)=\id_X, \hbox{ and }\]
	\[\Phi(s+t)=\Phi(s)\circ \Phi(t) \hbox{ (known as the semigroup property).}\]
\end{defn}

Define $\Phi^t$ to be $\Phi(t)$. These will be our {\bf fixed time relations}. Note that in the case of a flow $\phi: \RR\rarrow X\times X$, $\phi^t$ was a fixed time map, or function. We are just expanding to a generalized set function.
The graph of this set map is the same as the multiflow, regarded as a subset of $[0,\infty)\times X\times X$. If $\vp$ is a flow then its graph, restricted to $[0,\infty)\times X\times X$, will be a multiflow. The same flexibility that proved useful in relations - being able to think in terms of a graph or in terms of set function - will be invaluable with multiflows.

To move backward, we once again use the {\it transpose} $\Phi^*$. 
This is because if we let $\Phi$ be a multiflow, then $\Phi^t$ must a relation for any fixed $t>0$. The same problems exist in attempting to move backward; namely, if we define $\Phi^{-t}$ using the inverse image $(\Phi^t)^{-1}$, then it might not be a relation. Thus $(\Phi^t)^{-1}$ might fail to be a multiflow when expanded to $t\in[0,\infty)$. We propose the following definition.

\begin{defn}
Let $\Phi\overset{cl}{\subset}[0,\infty)\times X\times X$ be a multiflow. Define
	\[\Phi^* = \{ (t,y,x): (t,x,y)\in \Phi\}\]
to be the {\bf transpose} of $\Phi$.
\end{defn}

Then, $\{(y,x): (t,y,x)\in \Phi\}=(\Phi^*)^t = (\Phi^t)^* = \{ (y,x) : (x,y)\in \Phi^t\}.$

There may be questions of why multiflows are the right objects to serve as the generalizations of flows. We argue that results like those of McGehee and Thieme lend weight to the notion that they are indeed the natural choice. Also, McGehee's '92 paper has stood for 26 years, with relations proposed as the natural generalization for maps. Multiflows are built directly from that. Fixing a time in a multiflow yields a relation. Multiflows also respect the key structures of flows and semiflows; that is, they have the semigroup property.

We now discuss a few examples of multiflows and complications that arise by allowing for the lack of uniqueness in forward time.

\begin{ex}\label{ex:DispSlidingReg}
Let $x,y,t\in \RR$. Consider
\[\left[   \begin{array}{c}  \dot{x} \\ \dot{y}\end{array}  \right] = 
\left\{    \begin{array}{cl}    
\left[ \begin{array}{c} 
1 \\ 1
 \end{array} \right], & y\ge 0  \\    &    \\
 \left[\begin{array}{c} 1 \\ 0 \end{array}\right], &y=0 \\ & \\
\left[ \begin{array}{c} 
1 \\ -1
 \end{array} \right], & y\le 0  
\end{array} \right  .  . \] 

Then the collection of all possible solutions $\Phi$ is an example of a multiflow:

\begin{align*}
	\Phi =
	       &\left\{ \left( t,  \left[\begin{array}{c} x_0 \\ 0 \end{array}\right],  \left[\begin{array}{c} x_0+t \\ y \end{array}\right]\right):  0\le t, 0 \le y \le x_0+t  \right\} \\
	       & \cup \left\{ \left( t,  \left[\begin{array}{c} x_0 \\ 0 \end{array}\right],  \left[\begin{array}{c} x_0+t \\ y \end{array}\right]\right): 0\le t,  -(x_0+t)\le y \le 0   \right\}\\
	       &\cup \left\{ \left( t,  \left[\begin{array}{c} x_0 \\ y_0 \end{array}\right],  \left[\begin{array}{c} x_0+t \\ y_0+\text{sgn}(y_0)t \end{array}\right]\right): 0\le t \le |y_0| \right\}.
\end{align*}

Let $\left[\begin{array}{c} x_0\\y_0\end{array}\right]=\left[\begin{array}{c} -2 \\ 0 \end{array}\right]$. Then $\Phi^0(x_0)=x_0$, and indeed we could follow the multiflow $\Phi$ backward in time to gain unique preimages of $x_0$. For instance, consider all the solutions in $\Phi$, which go through $\left[\begin{array}{c}x_0\\ y_0\end{array}\right]=\left[\begin{array}{c} -2 \\ 0 \end{array}\right]$. Those solutions will all agree if one moves in the negative time direction (there is a unique backwards orbit). However, following forward in time, we get all those solutions shown in Figure \ref{fig:Filippov1}, and more.

\begin{figure}\centering
  \includegraphics[height=2in]{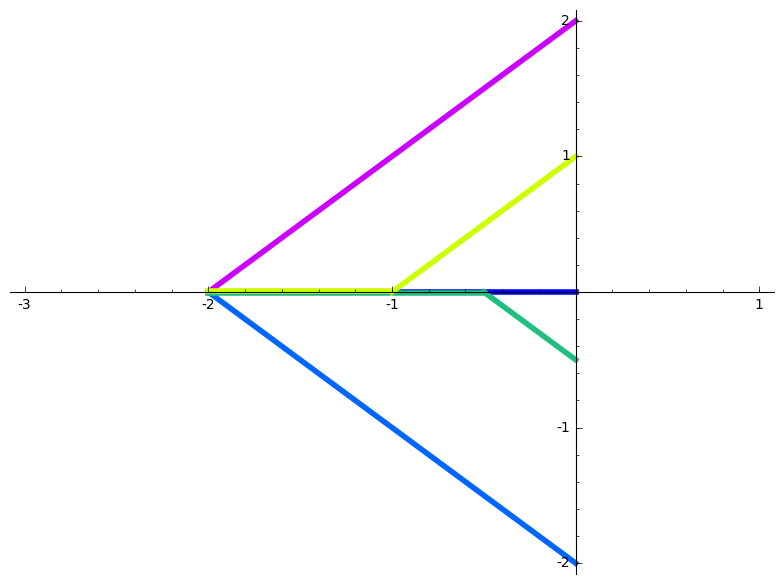}
  \caption{Some of the solutions in $\Phi$ (Example \ref{ex:DispSlidingReg}), with initial point $x_0=[-2,0]^\text{T}$, for $t\in[0,2]$.}
  \label{fig:Filippov1}
\end{figure}

In fact, $\Phi^2(x_0)=[-2,2]$. This example has a region in which sliding\footnote{For further exploration of sliding and sliding regions, see \ref{bk:Filippov} and \ref{bk:Leifeld}. The important notion here is that the vector fields are not smooth at the $x$-axis and this has the potential to cause non-uniqueness.} is possible. Coming from $x_0$, we might follow the upper or lower vector fields, or we might move along the $x$-axis. Once we leave the $x$-axis, there is only one path to follow, but at any point along the $x$-axis, we have all three choices: to move upward, to move downward, or to continue horizontally. In this case the $x$-axis is the Filippov sliding region.


\end{ex}

\begin{defn}\label{defn:StreamDispn}
Given a multiflow $\Phi\overset{cl}{\subset}[0,\infty)\times X\times X$, a {\bf forward stream} at $x\in X$ is a set of subsets of $X$, $\{U_t\}_{t\in\RR_{\ge0}}$ such that $y_t\in U_t\iff (t,x,y_t)\in \Phi$. \end{defn}

\begin{defn}\label{defn:OrbitDispn}
Given a multiflow $\Phi\overset{cl}{\subset}[0,\infty)\times X\times X$, a {\bf forward orbit} at $x\in X$ is a set of points $\{y_t\}_{t\in\RR_{\ge0}}\subset X$ such that
	\begin{enumerate}
		\item for some $t\in\RR_{\ge0}, ~ (t,x,y_t)\in \Phi,$
		\item for each $t$ there is exactly one $y_t$ in the orbit, and
		\item for any two $y_{t_0},y_{t_1}$, with $t_1>t_0$, $(y_{t_0},y_{t_1})\in\Phi^{t_1-t_0}.$
	\end{enumerate}
\end{defn}

We see that (1) is guaranteeing that $y_t$ is in the forward stream (or just stream, unless there is need to distinguish from the backward stream) and (3) forces the compatibility, like we discussed when defining streams and orbits for relations.

\begin{defn}\label{defn:BkwdStreamDispn}
Given a multiflow $\Phi\overset{cl}{\subset}[0,\infty)\times X\times X$, a {\bf backward stream} at $x\in X$ is a set of subsets of $X$, $\{V_t\}_{t\in\RR_{\ge0}}$ such that $y_t\in V_t\iff (t,x,y_t)\in \Phi^*$ (or, equivalently, $(t,y_t,x)\in \Phi$). \end{defn}

\begin{defn}
If $\Phi\overset{cl}{\subset}[0,\infty)\times X\times X$ is a multiflow and $y\in X$, then $\{x_t\}_{t\in [0,\infty)}\subset X$ is a {\bf backward orbit} if
	\begin{itemize}
	\item for some $t\in\RR_{[0,\infty)},$ $(t,y,x_t)\in\Phi^*,$
	\item for each $t$ there is at most one $y_t$ in the orbit, and
	\item for any two $x_s,~x_t$, with $t>s$, $(x_s,x_t)\in \Phi^{t-s}$.
	\end{itemize}
\end{defn}

\begin{ex}\label{ex:XOneHalf}
Let $x\in \RR^1=X$, and let $\dot{x}=|x|^{1/2}.$ The solution to this differential equation is
\begin{align*} \Phi\subset& \RR\times X\times X, \hbox{ where } \\
			\Phi=\{t,0,0\}&\cup\left\{(t,x,y):~ x\ge0,~t\ge -2\sqrt{x}, ~y=\left(\frac{t}{2}+\sqrt{x}\right)^2\right\} \\  &\cup   \left\{  (t,x,y):~      x\le0,~   t\le 2\sqrt{-x},~ y=-\left(\frac{-t}{2}+\sqrt{-x}\right)^2 \right\}.
\end{align*}

The map $f(x) = \Phi^0(x)=id.$ For each fixed $t>0$, we get a map that is ``above" the identity line $y=x$. For $t<0$, we get a map that is ``below" the identity line $y=x$. This makes sense, given that $\dot{x}=|x|^{1/2}\ge0$. Ordinarily, it makes no sense to describe $\Phi^t$ explicitly when $t<0$, but in this case it is possible to do so, and the definition agrees: for $t<0$, $\Phi^t = (\Phi^{|t|})^*$. The proof is omitted.
		
This is not a flow, as can be seen in Figure \ref{fig:XOneHalfTimeTMaps}. If one is following along the orbit $\Phi(t,0)=0$ for all $t$, we find a sliding region. Specifically, if $t\ge0$, and if $x=0$, then $t\ge -2\sqrt{x_0}$ for some $x_0$, and the first and second conditions above are satisfied. So there are two valid solutions, associated with those two conditions. Either of the first two expressions will return $\frac{\partial}{\partial t}\big|_{x=0,t=0} \Phi(t,x_0) = 0$. Therefore, $\Phi$ lacks unique solutions going forward whenever $x=0$. Likewise, $\Phi$ lacks uniqueness in the negative time direction. If $t\le0$ and $x=0$, moving backwards means continuing along $x=0$ or along the path described in the third expression. So long as $x=0$ has been chosen for each $s\in(t,0]$, the choice will again need to be made at $t$.

\begin{figure}\centering
  \includegraphics[width=2.5in]{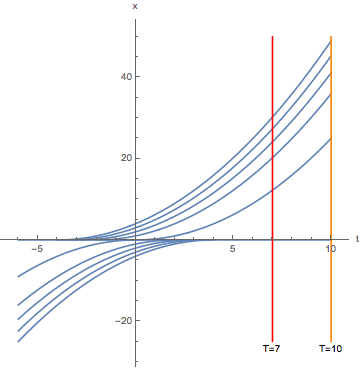}
  \caption{Solutions to $\dot{x}=|x|^{1/2}$ and $T$ markers for the fixed-time relations in Example \ref{ex:XOneHalf}. }
  \label{fig:XOneHalfTimeTMaps}
\end{figure}

\end{ex}

Since our goal was to define multiflows in such a way that when we fix a time, we get a relation, it is worth pausing to calculate some fixed-time relations. 

\begin{ex}\label{ex:XOneHalfReln}
Let us also look at the discretization of $\Phi$ in Example \ref{ex:XOneHalf}; in other words, let us see the relations that result from fixing $t$. Note that it is impossible to do so without separating the positive, negative, and zero values of $t$. The image in Figure \ref{fig:XOneHalfTimeTMaps} includes two vertical lines at fixed values $T=7$ and $T=10$. We shall eventually see the relations, which correspond to these time values.

Let $f=\Phi^0$, and obviously $f=\id$. This is not particularly interesting, but it does give us a frame with which to work. For any values of $t>0$, $\Phi^t(x)\ge x$; for any values of $t<0$, $\Phi^t(x)\le x$. That much is obvious from the ODE, since $\dot{x}\ge0$.

Let $y=f(x) = \Phi^t(x)$ for some fixed $t>0$. We must keep in mind that $x$ is the {\it starting point} and $y$ is in the image after time $t$. For $x\le\frac{-t^2}{4}$, the path is uniquely determined, and $y=-(-\frac{t}{2}+\sqrt{-x})^2$. Notice that $x<0$, and $y\le0$. Likewise, if $x>0$, there is a unique finishing value, and it is $y=(\frac{t}{2}+\sqrt{x})^2$. In this case, $x$ was already positive, and since the resulting $y$ will be more positive, no issue arises from crossing 0.

Now, we focus on $x$-values in that in-between region: $\frac{-t^2}{4} < x\le 0$. There is a time $t_x$, where $\Phi^{t_x}(x)=0$ (and $t_x$ is the smallest $t$-value for which this happens), but $t_x<t$. In fact, $t_x=2\sqrt{-x}$. We need to determine the possible paths of motion for that remaining time. On the low end, it is possible that we remain at 0 for the remainder of the time $(t-t_x)$. In this case, $0\in\Phi^t(x)$. On the high end, we have a scenario in which no measurable time is spent at 0. This means we immediately follow the positive path, starting from 0, for all remaining time $t-t_x$, and end at
	\[y = \left(\frac{t-t_x}{2} + \sqrt{0}\right)^2 = \left(\frac{ t - 2\sqrt{-x}}{2}\right)^2 = \left(\frac{t}{2} - \sqrt{-x}\right)^2. \]
All values between $0$ and $\left(\frac{t}{2} - \sqrt{-x}\right)^2$ are attainable by spending some of the remaining time in the sliding region, before following the positive path. Thus, if $x\in\left[-\frac{t^2}{4},0\right]$, then $\Phi^t(x)=\left[ 0 , \left(\frac{t}{2} - \sqrt{-x}\right)^2\right].$ Finding the image of $\Phi$, for negative values of $t$, uses similar computations.

All together, we get
\begin{itemize}
\item $t>0$:
\[
	\Phi^t(x)=\left\{ \begin{array}{ll} \left\{-\left(-\frac{t}{2}+\sqrt{|x|}\right)^2\right\}, & \hbox{if }x<-\frac{t^2}{4} \\ \\
							\left[ 0 , \left(\frac{t}{2}-\sqrt{|x|}\right)^2\right], & \hbox{if }-\frac{t^2}{4}\le x \le 0 \\ \\
							\left\{\left(\frac{t}{2} + \sqrt{|x|}\right)^2\right\} , & \hbox{if } x>0,\\
	\end{array}\right.
\]
\item $t=0$:
	\[ \Phi^t(x)=\{x\}, \]
\item $t<0$:
\[
	\Phi^t(x)=(\Phi^*)_{-t}=\left\{ \begin{array}{ll} \left\{-\left(-\frac{t}{2}+\sqrt{|x|}\right)^2\right\}, & \hbox{if }x<0 \\ \\
							\left[ - \left(\frac{t}{2}+\sqrt{|x|}\right)^2, 0 \right], & \hbox{if }0\le x \le \frac{t^2}{4} \\ \\
							\left\{\left(\frac{t}{2} + \sqrt{|x|}\right)^2\right\} , & \hbox{if } x>\frac{t^2}{4} \\
	\end{array}\right.
\]
\end{itemize}

\begin{figure}
    \centering
    \subfigure[$T=7$.]
    {
        \includegraphics[width=.35\linewidth]{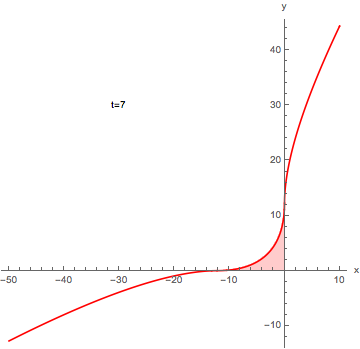}
        \label{fig:T7}
    }
    \subfigure[$T=10$.]
    {
        \includegraphics[width=.35\linewidth]{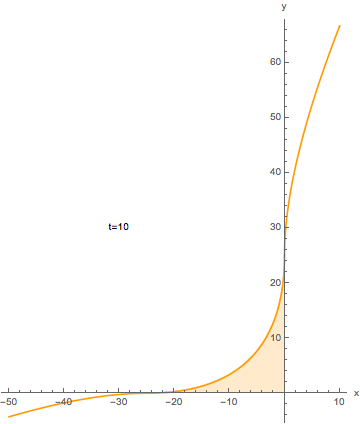}
        \label{fig:T10}
    }
    \caption{Illustrations of the fixed-time relations from Example \ref{ex:XOneHalf}.}
\end{figure}

\end{ex}

\begin{rmk}
Note that $\Phi^t$ enjoys two semigroup structures: $G_1=\{\Phi^t\}_{t\ge0} $ and $G_2=\{\Phi^t\}_{t\le0}=\{\Phi^*_t\}_{t\ge0} $ (the proof is omitted). However, there is not a group structure on $\{\Phi^t\}_{t\in\RR}$. As an example, consider $\Phi^6 \in G_1$ and $\Phi^{-1}\in G_2$, and let $x=-4$:
	\[ \Phi^6 (\Phi^{-1}(-4))=[0,9/4], \hspace{.2in} \Phi^5(-4)=[0,1/4],\hbox{ and }\hspace{.2in} \Phi^{-1}(\Phi^6(-4))=[-1/4,1/4].\]
\end{rmk}

Multiflows can also arise, with a simple restriction of a domain.

\begin{ex}\label{ex:DispRestrictDomain}
We offer an example of a simple ODE: $\dot{x} = 1$. When the space $X$ is $\RR$, we get a flow: $\vp(t,x_0)=x_0+t$. Looking at the time $t$ maps: $f:X\rarrow X$, we get $f(x)=x+t$, which are graphed as all the lines with slope 1.  

Now, let us restrict to $X=[-1,1]$ and use the same differential equation. First of all, the graph of $X\times X$ is now a box of side length 2. It actually does not make sense to define a flow because, for instance, there is no image of $x_0=1$ for time $t>0$. Even with $x_0=-1$, the orbit does not last past $t=2$. Similar issues exist for negative $t$ values. The graph is no longer of lines, but of line segments.

\begin{figure}\centering
  \includegraphics[width=.4\linewidth]{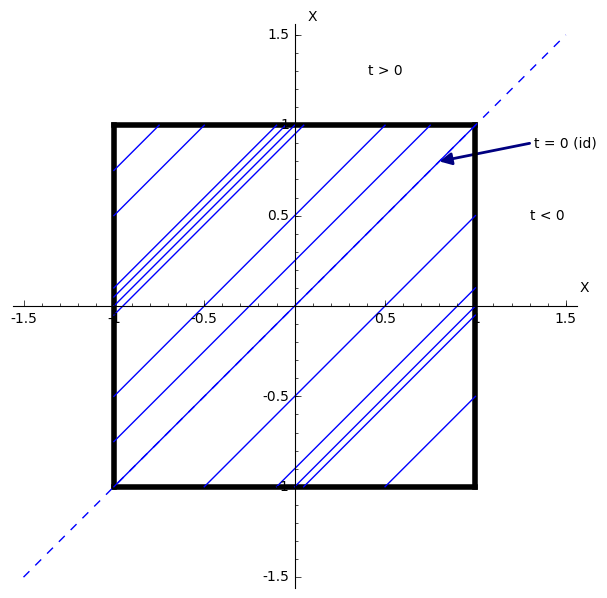}
  \caption{Image of Multiflow from Example \ref{ex:DispRestrictDomain}.}
  \label{fig:BifnExRestDom}
\end{figure}

\FloatBarrier

This is not a very complicated example, but we cannot truly apply the notation of flows and maps to it. So, we define a multiflow. Let $\Phi\subset \RR\times X\times X$ to be
	\[\Phi=\{(t,x,y):~y=x+t, \hbox{ with } x,y\in X\}.\]
We actually end up with restrictions on $t$, depending on $x\in X$: $-1\le t+x\le 1,$ or $-2\le-1-x\le t\le 1-x\le 2.$

Again, we are able to discuss a multiflow, which is defined for some negative numbers of $t$, but again the negative values will agree with $\Phi^*$.

\end{ex}

\newpage
\section{Ultimate Behavior and Invariance}\label{sec:UltBehavior}

We are often concerned with the ultimate behavior of a system. Given a set, we wish to know what will happen if we follow it ``forever" in forward or backward time. Ideally, we come up with a set, which will not change if acted upon again - we want to find an {\it invariant} set. These sets (and the gradient-like flows between them) make up the structure of our space, if we are acting by flows, maps \ref{bk:Conley}, or relations \ref{bk:McGeheeAttractors}. This is the notion of the {\it omega limit set}, or $\om$-limit set. A similar notion - the alpha limit set ($\alpha$-limit set) - exists for backward time (when backward time is defined). We will first define what we mean by ``invariant."

\begin{defn}\label{forinvflow}
Let $\phi$ be a flow over a space $X$, and let $S\subset X$ be a subset such that $\phi^t(S) = \{ \phi^t(s):s\in S\}\subset S$ for all $t>0$. Then $S$ is said to be {\bf forward invariant} (or positively invariant).
\end{defn}

\begin{defn}\label{ForInvMap}
A set $S\subset X$ is {\bf forward invariant} (or positively invariant) under map $f$ if $f(S)\subset S$.
\end{defn}

It is natural to simply reverse the time in the flow definition above, and gain the definition for a {\it backward invariant} set. This is equivalent to the following.

\begin{defn}
The set $S\subset X$ is {\bf backward invariant} for the flow $\phi$ on $X$ if $S\subset \phi^t(S)$ for all $t\ge0$. 
\end{defn}

For maps, we are not guaranteed that $f^{-1}$ exists. Hence the need to add a condition to the following.

\begin{defn}
If $f$ is a bijective map over a space $X$, then $S\subset X$ is called {\bf backward invariant} if $f^{-1}(S)\subset S$.
\end{defn}

\begin{defn}Let $\phi$ be a flow over a space $X$.
The set $S$ is {\bf invariant} for $\phi$ if
	\[\phi^t(S)=S, \hspace{.5in}\hbox{ for all }t\in\RR,\]
	or, equivalently, if $S$ is both forward and backward invariant.
\end{defn}

\begin{defn}
Given a map $f$ on a space $X$,
we call $S\subset X$ {\bf invariant} under $f$ if $f(S)=S$. If $f$ is invertible, this is equivalent to $f^{-1}(S)=S$.
\end{defn}


\begin{defn}\label{defn:OmLimSet}
Let $f:X\rightarrow X$ be a map and let $S\subset X$. The {\bf omega limit set} of $S$ is defined as 
	\[\om(S;f)\equiv\bigcap\limits_{m\ge 0}\ol{\bigcup\limits_{n\ge m} f^n(S) }.\]
If $f^{-1}$ is defined, then the {\bf alpha limit set} of $S$ is
	\[\alpha(S;f)\equiv\bigcap\limits_{m\le 0}\ol{\bigcup\limits_{n\le m} f^n(S) }.\]
\end{defn}

Note that the alpha limit set is just the omega limit set for the map$f^{-1}$.

\begin{defn}
Let $\phi$ be a flow (or semiflow) on a space $X$ and let $S\subset X$. The {\bf omega limit set} of $S$ is defined as 
	\[\om(S;\phi)\equiv\bigcap\limits_{t\in[0,\infty)}\ol{\phi^{[t,\infty)}(S) }.\]
If $\phi$ is a flow, then $\phi^{-1}$ is defined, and the {\bf alpha limit set} of $S$ is
	\[\alpha(S;\phi)\equiv\bigcap\limits_{t\in(-\infty,0]}\ol{ \phi^{(-\infty,t]}  }.\]
\end{defn}

Omega limit sets are guaranteed to be invariant. The closures in the above definitions are necessary, as without them the resulting sets are not always invariant. In defining omega limit sets for relations (see \ref{bk:McGeheeAttractors}), the goal was to find an invariant set that, when restricted to the class of maps, agrees with the Definition \ref{defn:OmLimSet}. In defining omega limit sets for multiflows, we have a similar goal.

\begin{ex}\label{ex:OmMap}
To gain insight on how $\om(S;f)$ is calculated, we return to Example \ref{ex:mapPerOrbit}. Specifically, we look at $f^2$, $f^3$, and $f^4$, which - together - also happen to illustrate all the larger iterations of $f.$
Thus, $\bigcup\limits_{n\ge k} f^n$ will always contain maps, which look like $f^2$, $f^3$, and $f^4$. If we follow $S=\{a,b\}$ for instance, $f(S)=\{b,c\}$. If we keep going, we get $f(S)\cup f^2(S)\cup f^3(S)\cup\ldots = \{b,c,d,e\}$, and because our set is discrete, this is the same as the closure. But, we begin to delete previous iterates: $\ol{f^2(S)\cup f^3(S)\cup f^4(S)} = \{c,d,e\}$ because eventually (almost immediately) $b$ was no longer in the image of $S$ or the closure of the union. In this example, we don't lose any more points, and so $\om(S;f)=\{c,d,e\}$.


\begin{figure}
    \centering
    \subfigure[$n\equiv 2$ mod $3$]
    {
        \includegraphics[width=.2\linewidth]{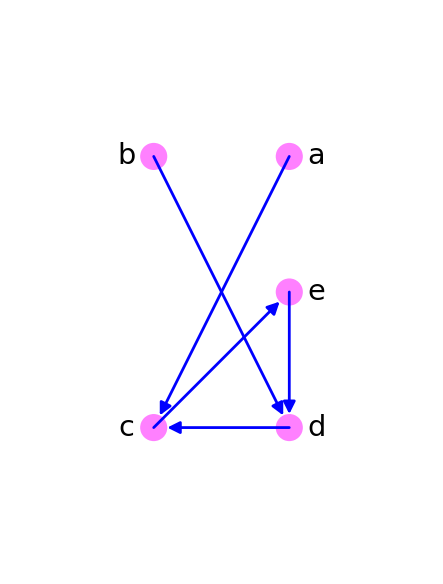}
        \label{fig:MapEx1f2Mod}
    }
    \subfigure[$n\equiv 0$ mod $3$]
    {
        \includegraphics[width=.2\linewidth]{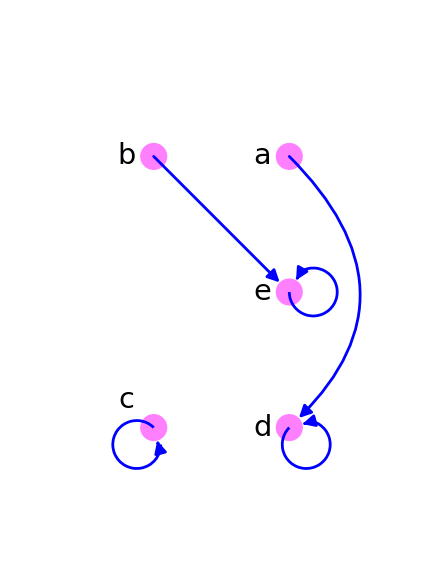}
        \label{fig:MapEx1f0Mod}
    }
    \subfigure[$n\equiv 1$ mod $3$]
    {
        \includegraphics[width=.2\linewidth]{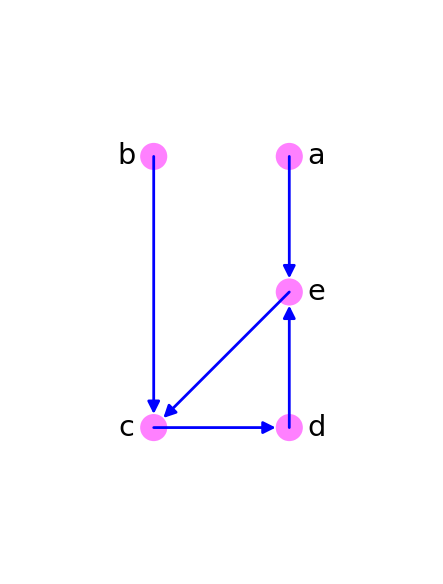}
        \label{fig:MapEx1f1Mod}
    }
    \caption{All illustrations of $f^n$, $n>1$.}
    \label{fig:MapEx1Iterations}
\end{figure}

\begin{figure}\centering
	\includegraphics[width=1.5in]{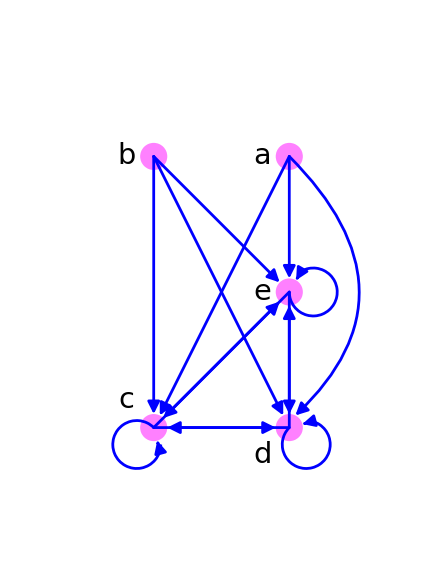}
	\caption{Illustration of $\ol{\bigcup\limits_{n\ge m}f^n }$ from Examples \ref{ex:mapPerOrbit} and \ref{ex:OmMap}.}\label{fig:MapEx1fInfty}
\end{figure}
\FloatBarrier

\end{ex}

\subsection{Ultimate Behavior and Invariance for Relations}\label{sec:UltBehaviorRelns}

Recall that when $f:X\rarrow X$ is an invertible map and $S\subset X$,
	\[   f(S)\subset S \iff S\subset f^{-1}(S),\]
and we refer $S$ as being {\it forward invariant}. With relations, where $f^{-1}$ is usually not even defined, it makes sense to split these notions. Thus, in \ref{bk:McGeheeAttractors}, the following terms were assigned.

\begin{defn}\label{defn:invariance} If $f$ is a relation over $X$ and $S\subset X$, then we use the following to describe $S$:
	\begin{enumerate}
		\item If $f(S)\subset S$, then $S$ is called a {\bf confining} set for $f$;
		\item If $f^*(S)\subset S$, then $S$ is called a {\bf rejecting} set for $f$;
		\item If $f(S)\supset S$, then $S$ is called a {\bf backward complete} set for $f$;
		\item If $f^*(S)\supset S$, then $S$ is called a {\bf forward complete} set for $f$;
		\item If $f(S) = S$, then $S$ is called an {\bf invariant} set for $f$;
		\item If $f^*(S) = S$, then $S$ is called a {\bf *-invariant} set for $f$;
	\end{enumerate}
\end{defn}

We use these new terms in part so as not to be tempted to assume the same structures that exist with maps. 

\begin{ex}
We bring to mind again Example \ref{ex:mapPerOrbit}, in which $X=\{a,b,c,d,e\}$, and $f(a)=b$, $f(b)=c=f(e)$, $f(c)=d$, and $f(d)=e$. The set $S=\{c,d,e\}$ is an orbit and an invariant set because $f(S)=\{c,d,e\}=S$.

Unlike with an invertible map, the inverse image of $c$ is a set with more than one element: $f^{-1}(c) = \{b,e\}$. We actually escape the periodic orbit in backward time. So, $f^{-1}(S)=f^{-1}(\{c,d,e\}=\{b,c,d,e\}\neq S.$ The set $\{c,d,e\}=S$ certainly defines a periodic orbit, and therefore an invariant set with respect to $f$, but $S$ is not backward invariant with respect to $f$.
\end{ex}

\begin{thm}\label{thm:RelnInvcUandInt}
If $f$ is a relation over a space $X$, and $\frak{A}$ is a set of invariant sets $A\subset X$. Then,
\begin{enumerate}
	\item $\bigcup\limits_{A\in\frak{A}}A$ is invariant, and
	\item $\bigcap\limits_{A\in\frak{A}}A$ is confining.
\end{enumerate}
\end{thm}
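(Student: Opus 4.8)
The plan is to reduce both claims directly to Lemma \ref{lem:relnUandInt}, which governs how a relation interacts with unions and intersections, together with the defining equality $f(A)=A$ enjoyed by each invariant set $A\in\frak{A}$. No hypothesis on $X$ beyond its being a space is needed, and there are no continuity or closedness issues to manage, so the entire argument is formal.

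For part (1), I would begin by applying Lemma \ref{lem:relnUandInt} (1), which says that relations preserve arbitrary unions, to obtain
\[
	f\left(\bigcup_{A\in\frak{A}} A\right) = \bigcup_{A\in\frak{A}} f(A).
\]
Since each $A$ is invariant, $f(A)=A$ for every $A\in\frak{A}$, so the right-hand side collapses to $\bigcup_{A\in\frak{A}} A$. This is precisely $f\!\left(\bigcup A\right)=\bigcup A$, the definition of invariance for the union, completing part (1).

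For part (2) the approach is parallel, but now I would invoke the weaker containment that Lemma \ref{lem:relnUandInt} (2) provides for intersections:
\[
	f\left(\bigcap_{A\in\frak{A}} A\right) \subset \bigcap_{A\in\frak{A}} f(A) = \bigcap_{A\in\frak{A}} A,
\]
where the final equality again substitutes $f(A)=A$. This yields $f\!\left(\bigcap A\right)\subset\bigcap A$, which is exactly the definition of a confining set, completing part (2).

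The main point requiring care is not a computational obstacle but a conceptual one: explaining why the intersection is only confining and cannot in general be upgraded to invariant. This is attributable entirely to the fact that Lemma \ref{lem:relnUandInt} gives only the containment $f\!\left(\bigcap A\right)\subset\bigcap f(A)$ rather than equality, and Example \ref{ex:RelnsDoNotPresIntn} already exhibits a relation for which this containment is strict. Thus the asymmetry between the two conclusions is an inherent feature of relations, not a weakness of the proof; the substance of the argument lies solely in selecting the correct half of Lemma \ref{lem:relnUandInt} for each case.
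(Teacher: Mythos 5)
Your proof is correct and follows essentially the same route as the paper: apply Lemma \ref{lem:relnUandInt} to move $f$ inside the union (with equality) and inside the intersection (with inclusion only), then substitute $f(A)=A$ for each invariant $A$. Your closing remark on why the intersection cannot be upgraded to invariant also matches the paper, which makes the same point immediately after its proof by citing Example \ref{ex:RelnsDoNotPresIntn}.
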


\begin{proof} Let $f$, $X$, and $\frak{A}$ be as described.
(1) We apply Lemma \ref{lem:relnUandInt}:
	\[    f\left(\bigcup\limits_{A\in\frak{A}}A\right)=\bigcup\limits_{A\in\frak{A}}f(A)=\bigcup\limits_{A\in\frak{A}}A.\]

(2) We again apply Lemma \ref{lem:relnUandInt}, but for intersections, we only get inclusion:
	\[    f\left(\bigcap\limits_{A\in\frak{A}}A\right)\subset\bigcap\limits_{A\in\frak{A}}f(A)=\bigcap\limits_{A\in\frak{A}}A.\]
\end{proof}

\begin{ex}\label{ex:RelnNotPresIntnEvenInvt}
Example \ref{ex:RelnsDoNotPresIntn} actually gave us an example, where intersections weren't preserved, even for a {\it finite} intersection of invariant sets.
\end{ex}

\begin{ex}[borrowed from Example 5.2 in \ref{bk:McGeheeAttractors}]\label{ex:ClosureofConfNotConf}
Let $X=[-2,+\infty]$ (the one point compactification of $[-2,+\infty)$). Consider the continuous surjective map (and therefore closed relation)
	\[       g(x) = \left\{\begin{array}{ll}
				x^2-2, & \hbox{if }x\in[-2,+\infty) \\
				+\infty, & \hbox{if }x=+\infty\end{array}\right. .\]
Let $f=g^*$ (the transpose of $g$). Consider the set $S=S=(2,\eta]$, then $f(S)=(2,\sqrt{\eta+2}]\subset S$, but $f(\ol{S})=[2,\sqrt{\eta+2}]\cup\{-2\}\not\subset\ol{S}.$ That is, $S$ is confining under $f$, but $\ol{S}$ is not confining. We refer to the actual purpose of the original Example 5.2 in a remark later in this section.
\end{ex}

The simplest situation is when $S$ is confining (or rejecting, or invariant...), but it is also useful to distinguish between the following.

\begin{defn}
Let $f$ be a relation over $X$. Then for $U\subset X$,
\begin{enumerate}
	\item $U$ is {\bf confining} for $f$ if $f(U)\subset U$.
	\item $U$ is {\bf strictly confining} for $f$ if $f(U)\subset \Int(U)$.
	\item $U$ is {\bf eventually confining} for $f$ if there exists some $N>0$ for which $f^n(U)\subset U$ for all $n\ge N$.
	\item $U$ is {\bf eventually strictly confining} for $f$ if there exists some $N>0$ for which $f^n(U)\subset \Int(U)$ for all $n\ge N$.
\end{enumerate}
\end{defn}

Instead of ``confining," we may use the term {\it immediately confining} to emphasize that it is stronger than {\it eventually confining}, but the immediacy is implied, whenever we omit the word ``eventually."

\begin{lem}\label{lem:evConfUnion}
Let $f$ be a closed relation, and let $E$ be a closed eventually confining set under $f$. Let $n$ be a number such that $f^n(E)\subset E$. Let \[B=\ol{\bigcup\limits_{k=0}^{n-1} f^k(E)}=\bigcup\limits_{k=0}^{n-1} f^k(E)=E\cup f(E)\cup f^2(E)\cup \ldots \cup f^{n-1}(E).\] Then $B$ is a closed, confining set.
\end{lem}

\begin{proof}
The union is already closed, because it is a finite union of closed sets.
Let $x\in f(B)=\bigcup\limits_{k=1}^{n} f^k(E)$. Then $x\in f^m(B)$ for some $1\le m\le n$. If $m<n$, then $x$ is explicitly in $B$. If instead $m=n,$ then $x\in f^n(E)\subset E$. Thus $B$ is confining.
\end{proof}

It is worth noting that we cannot simply take a similar union with continuous time and be guaranteed a closed set, as the union over continuous time is not a finite union. This provided additional complications for multiflows. We also cannot just take the closure of such a union. As Example \ref{ex:ClosureofConfNotConf},
	\[ S \hbox{ is confining }\centernot\implies \ol{S} \hbox{ is confining.}\]

\begin{ex}\label{ex:evStrConfReln}
Let $\phi$ be the flow on $X=\RR\times\RR$, defined by $\dot{x}=-x-y$ and $\dot{y}=x-y$. Choose $S$ to be the closed set between the curves pictured (with parametric curve $x=e^{-t}\cos(t),$ $y=\pm e^{-t}\sin(t)$ for $t\in[0,\pi]$).

\begin{figure}[h]\centering\label{fig:ExampleConfiningEventStrictConf}
  \includegraphics[width=2.5in]{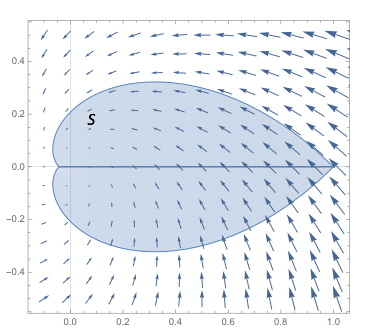}
  \caption{Image of $S$, which is confining and eventually strictly confining.}
\end{figure}

We will revisit this example later - in Example \ref{ex:evStrConf}, where $S$ will serve as an (immediately) confining, but only eventually strictly confining for multiflows. For now, let us discuss the fixed time relations $\phi^t$ (they're actually fixed time maps). For $0<t<\pi$, these relations are (immediately) confining, but only eventually strictly confining.
\end{ex}

Equivalent distinctions may eventually prove useful for the other terms in Definition \ref{defn:invariance} (rejecting, etc.), but we will not use them.
Note that a set that is confining is also eventually confining, a set which is strictly confining is confining, and a set which is eventually strictly confining is eventually confining. The proofs are omitted.

\[ \begin{array}{ccc}  S\hbox{ strictly confining} &  \implies    &   S\hbox{ confining}  \\ 
                                \rotatebox[origin=c]{270}{$\implies$} &   & \rotatebox[origin=c]{270}{$\implies$}        \\
                                S\hbox{ eventually strictly confining}        &   \implies             &S\hbox{ eventually confining}     \end{array} \]

Equipped with notions of confining and invariant sets, we wish to talk about ultimate behavior, which means we need the definition of an omega limit set.
For relations, we actually need a different definition than for maps. We call the one, which corresponds to the map definition, the strict omega limit set.

\begin{defn} The {\bf strict omega limit set} of a set $S\subset X$ under the relation $f$ is
	\[ \hat{\om}(S;f)\equiv \bigcap\limits_{m\ge 0}\ol{\bigcup\limits_{n\ge m} f^n(S) }.\]
\end{defn}

\begin{defn}\label{defn:OmegaReln} If $f$ is a relation over $X$ and $S\subset X$, then the {\bf omega limit set} of $S$ under $f$ is
	\[ \om(S;f)\equiv \bigcap \frak{K}(S;f),\]
where 
	\begin{align*} \frak{K}(S;f)\equiv& \{ K: K \hbox{ is a closed confining set satisfying}\\
							& f^n(S)\subset K \hbox{ for some }n\ge0\}.\end{align*}
We may abbreviate $\konf(S)$, $\konf_f$ or even $\konf$ when the $S$ or $f$ is understood from context.
\end{defn}

A similar notion - the {\it alpha limit set} - will not be explored in depth in this paper, but just like any other backwards-time definitions, it would use the transpose of a relation. See \ref{bk:McGeheeAttractors} for more.

\begin{rmk}\label{rmk:OmNotOmHatReln}
See Example 5.2 in \ref{bk:McGeheeAttractors} for an example, which explicitly shows not only that $\om(S;f)$ and $\hat{\om}(S;f)$ can differ, but also that sometimes the strict omega limit set is not invariant. For multiflows, we will explore a different example.
\end{rmk}

The strict omega limit set is easier to compute (and more familiar in design), and in many instances the strict omega limit and the omega limit agree. In particular, they must agree when $f$ is a map, and not just a relation. The following theorems prove invaluable in any groundwork on multiflows.

\begin{thm}[Theorem 5.1 in \ref{bk:McGeheeAttractors}]\label{thm:InclOmLimReln}
If $f$ is a closed relation on a compact Hausdorff space $X$ and $S\subset X$, then $\hat{\om}(S)\subset\om(S)$.
\end{thm}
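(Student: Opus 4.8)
The plan is to unwind the definition of $\om(S)$ as an intersection and to show that the strict omega limit set sits inside every member of the family $\frak{K}(S;f)$. Concretely, it suffices to prove that $\hat{\om}(S)\subset K$ for each closed confining set $K$ satisfying $f^N(S)\subset K$ for some $N\ge 0$; intersecting over all such $K$ then yields $\hat{\om}(S)\subset\bigcap\frak{K}(S;f)=\om(S)$. So the whole argument reduces to a containment statement about an arbitrary fixed $K\in\frak{K}(S;f)$.

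First I would fix such a $K$ and exploit the fact that a confining set absorbs all forward iterates of anything it already contains. Since $f(K)\subset K$, an easy induction using Lemma \ref{lem:relninclusion}(1) gives $f^k(K)\subset K$ for all $k\ge 0$. Combining this with the hypothesis $f^N(S)\subset K$ and monotonicity of images (again Lemma \ref{lem:relninclusion}(1)), I obtain $f^{N+k}(S)=f^k\!\left(f^N(S)\right)\subset f^k(K)\subset K$ for every $k\ge 0$; that is, $f^n(S)\subset K$ for all $n\ge N$. This is the step that propagates a single hypothesis $f^N(S)\subset K$ to the entire tail $n\ge N$.

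Next I would pass to the tail unions and their closures. The previous step gives $\bigcup_{n\ge N} f^n(S)\subset K$, and because $K$ is closed this upgrades to $\ol{\bigcup_{n\ge N} f^n(S)}\subset K$. Since the family $\{\ol{\bigcup_{n\ge m} f^n(S)}\}_{m\ge 0}$ is nested and decreasing in $m$, the intersection $\hat{\om}(S)$ is contained in its $m=N$ term, whence $\hat{\om}(S)\subset\ol{\bigcup_{n\ge N} f^n(S)}\subset K$. As $K\in\frak{K}(S;f)$ was arbitrary, this finishes the inclusion. I would also note that $\frak{K}(S;f)$ is nonempty---the ambient space $X$ is closed, confining (since $f(X)\subset X$), and contains $f^0(S)=S$---so that the intersection defining $\om(S)$ is genuinely taken over a nonempty collection and the statement is not vacuous.

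The argument is essentially routine once this reformulation is in hand; the only real content is the absorption property of confining sets established in the second step. The main thing to be careful about is the interplay between the two nonequivalent definitions: $\om(S)$ ranges over closed confining supersets of some iterate of $S$, whereas $\hat{\om}(S)$ is built directly from the closures of tail unions. Consequently the closedness of $K$ must be invoked explicitly to pass from $\bigcup_{n\ge N} f^n(S)\subset K$ to $\ol{\bigcup_{n\ge N} f^n(S)}\subset K$, and the compact Hausdorff hypothesis on $X$ is what guarantees (via Lemma \ref{thm:closedImages}) that the iterated images and the family $\frak{K}(S;f)$ behave well; these should not be taken for granted even though the set-theoretic core of the proof is short.
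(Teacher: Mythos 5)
Your proof is correct. Note that the paper itself does not prove this statement---it is imported verbatim as Theorem 5.1 of \ref{bk:McGeheeAttractors}, with the proof deferred to that reference---so there is no in-paper argument to compare against; your argument (show $\hat{\om}(S)\subset K$ for an arbitrary $K\in\frak{K}(S;f)$ by propagating $f^N(S)\subset K$ along the tail via the confining property, then use closedness of $K$ to absorb the closure of the tail union) is the natural one and is essentially the proof in the cited source. One small correction to your closing remarks: nothing in your argument actually uses compactness, the Hausdorff property, or closedness of the relation $f$ (Lemma \ref{thm:closedImages} is never invoked); those hypotheses are simply standing assumptions inherited from the source's statement, and the inclusion $\hat{\om}(S)\subset\om(S)$ holds in any topological space for any relation. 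It is the reverse inclusion (as in Theorems \ref{thm:EquivOmLimRelnMap} and \ref{thm:EquivOmLimRelnClConf}) that genuinely needs those hypotheses.
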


There are no special properties of $S$; this inclusion holds for omega limit sets on all subsets of $X$.

\begin{thm}[Theorem 5.3 in \ref{bk:McGeheeAttractors}]\label{thm:EquivOmLimRelnMap}
If $g$ is a continuous map on a compact Hausdorff space $X$ and $S\subset X$, then
		\[ \om(S;g)=\hat{\om}(S;g) \]
\end{thm}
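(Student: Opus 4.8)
The plan is to prove the two inclusions separately. One direction, $\hat{\om}(S;g)\subset\om(S;g)$, is already handed to us by Theorem \ref{thm:InclOmLimReln}, which holds for every closed relation on a compact Hausdorff space and in particular for the (closed) graph of the continuous map $g$. So the whole content lies in the reverse inclusion $\om(S;g)\subset\hat{\om}(S;g)$, and it is here that I expect the hypothesis that $g$ is a genuine map (not merely a relation) to be indispensable.

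My strategy for the reverse inclusion is to exhibit the tails that define $\hat{\om}$ as members of the family $\konf(S;g)$ over which $\om$ is an intersection. Concretely, set
\[ B_m := \ol{\bigcup_{n\ge m} g^n(S)}, \qquad m\ge 0, \]
so that by definition $\hat{\om}(S;g)=\bigcap_{m\ge0} B_m$. Each $B_m$ is closed by construction, and $g^m(S)\subset B_m$, so to conclude $B_m\in\konf(S;g)$ it remains only to check that $B_m$ is confining, i.e.\ $g(B_m)\subset B_m$. Once this is done, every $B_m$ lies in the intersecting family, whence $\om(S;g)=\bigcap\konf(S;g)\subset\bigcap_{m\ge0}B_m=\hat{\om}(S;g)$, and combining with Theorem \ref{thm:InclOmLimReln} gives equality.

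The one real step, and the step I expect to be the crux, is confinement of $B_m$. Writing $A_m=\bigcup_{n\ge m}g^n(S)$, I would first use that $g$ (as a relation) preserves unions, by Lemma \ref{lem:relnUandInt}, to get the elementary identity $g(A_m)=\bigcup_{n\ge m}g^{n+1}(S)=A_{m+1}$, and then use continuity of $g$ in the form $g(\ol{A_m})\subset\ol{g(A_m)}$ (valid for any continuous single-valued map, via nets). This yields
\[ g(B_m)=g(\ol{A_m})\subset\ol{g(A_m)}=\ol{A_{m+1}}=B_{m+1}\subset B_m, \]
so $B_m$ is indeed confining. I want to emphasize where map-ness enters: the inclusion $g(\ol{A})\subset\ol{g(A)}$ is exactly the property that can fail for a general closed relation — Example \ref{ex:ClosureofConfNotConf} shows that the closure of a confining set need not be confining for the transpose relation considered there — so this is precisely the place the theorem uses that $g$ is continuous and single-valued rather than a mere relation. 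The remaining ingredients (closedness of the $B_m$, the monotonicity $B_{m+1}\subset B_m$, and $g^m(S)\subset B_m$) are routine, and compactness together with the Hausdorff property is invoked only to keep everything inside the standing hypotheses of Theorem \ref{thm:InclOmLimReln}.
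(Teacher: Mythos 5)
Your proof is correct: each tail closure $B_m=\ol{A_m}$ is closed, contains $g^m(S)$, and is confining because continuity of the single-valued map $g$ gives $g(\ol{A_m})\subset\ol{g(A_m)}=\ol{A_{m+1}}=B_{m+1}\subset B_m$, so $\{B_m\}_{m\ge0}\subset\konf(S;g)$, whence $\om(S;g)\subset\bigcap_{m\ge0}B_m=\hat{\om}(S;g)$, and Theorem \ref{thm:InclOmLimReln} supplies the opposite inclusion. The paper itself does not reprove this statement (it imports it as Theorem 5.3 of \ref{bk:McGeheeAttractors}), and your argument is the standard one for that result, correctly isolating the one step --- $g(\ol{A})\subset\ol{g(A)}$ --- where continuity and single-valuedness are essential and which, as you note via Example \ref{ex:ClosureofConfNotConf}, genuinely fails for closed relations.
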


This was a valuable result, as it lent weight to the notion that Definition \ref{defn:OmegaReln} is the correct object to consider. We will contend with resolving even more definitions when we encounter multiflows.

\begin{thm}[Theorem 5.4 in \ref{bk:McGeheeAttractors}]\label{thm:EquivOmLimRelnClConf}
If $S$ is a closed confining set for a closed relation $f$ on a compact Hausdorff space, then
	\[ \hat{\om}(S)=\om(S)=\bigcap\limits_{n\ge0}f^n(S).\]
\end{thm}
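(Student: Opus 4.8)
The plan is to exploit the fact that confinement collapses the nested structure of the iterated images. Since $S$ is confining, $f(S)\subset S$, and applying Lemma \ref{lem:relninclusion}(1) repeatedly gives $f^{n+1}(S)=f(f^n(S))\subset f^n(S)$, so the images form a decreasing nested sequence $S\supset f(S)\supset f^2(S)\supset\cdots$. Because $S$ is closed and $f$ is a closed relation on a compact space, Lemma \ref{thm:closedImages} guarantees that every $f^n(S)$ is closed. These two observations --- nestedness and closedness --- are what make all three sets coincide.

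First I would compute the strict omega limit set directly. For each $m$, nestedness gives $\bigcup_{n\ge m}f^n(S)=f^m(S)$, which is already closed, so $\ol{\bigcup_{n\ge m}f^n(S)}=f^m(S)$. Taking the intersection over $m$ yields
\[ \hat{\om}(S)=\bigcap_{m\ge0}f^m(S). \]

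Next I would show that this common intersection contains $\om(S)$. The key point is that each iterate $f^m(S)$ is itself a member of the collection $\konf(S;f)$: it is closed, it satisfies the defining condition $f^n(S)\subset f^m(S)$ trivially (take $n=m$), and it is confining precisely because $f(f^m(S))=f^{m+1}(S)\subset f^m(S)$ by nestedness. Hence $\{f^m(S):m\ge0\}\subset\konf(S;f)$, and Lemma \ref{lem:intersections}(1) (intersecting over a larger family gives a smaller set) yields
\[ \om(S)=\bigcap\konf(S;f)\subset\bigcap_{m\ge0}f^m(S)=\hat{\om}(S). \]
Combining this with the reverse inclusion $\hat{\om}(S)\subset\om(S)$ from Theorem \ref{thm:InclOmLimReln} forces $\hat{\om}(S)=\om(S)=\bigcap_{m\ge0}f^m(S)$.

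The main obstacle is not any single step but making sure each small ingredient is legitimate: the collapse of the union requires genuine nestedness (which in turn requires confinement, not merely eventual confinement), the passage $\ol{f^m(S)}=f^m(S)$ requires the closed-image lemma (and hence compactness), and the containment $f^m(S)\in\konf$ quietly reuses the confinement of $S$ propagated through the iterates. I expect the verification that each $f^m(S)$ is confining --- the engine behind $\om(S)\subset\bigcap_n f^n(S)$ --- to be the conceptual crux, since it is exactly where the hypothesis that $S$ is confining (rather than merely some weaker condition) is indispensable.
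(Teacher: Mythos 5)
Your proof is correct. Note, though, that the paper does not actually prove this statement at all: it is quoted as Theorem 5.4 of \ref{bk:McGeheeAttractors}, with the reader sent to that reference for the proof. What your argument buys, therefore, is a complete, self-contained derivation using only tools the paper itself states: confinement plus Lemma \ref{lem:relninclusion}(1) gives the nesting $f^{n+1}(S)\subset f^n(S)$; Lemma \ref{thm:closedImages} applied inductively gives closedness of each iterate (this is where compactness enters); nesting collapses $\bigcup_{n\ge m}f^n(S)$ to the already-closed set $f^m(S)$, so $\hat{\om}(S)=\bigcap_{m\ge0}f^m(S)$ with no closure operation needed; each $f^m(S)$ is then verified to lie in $\konf(S;f)$ (closed, confining by nesting, and containing the forward image $f^m(S)$ itself), so Lemma \ref{lem:intersections}(1) gives $\om(S)\subset\bigcap_{m\ge0}f^m(S)=\hat{\om}(S)$; and Theorem \ref{thm:InclOmLimReln} supplies the reverse inclusion $\hat{\om}(S)\subset\om(S)$, which is the only place the Hausdorff hypothesis is invoked. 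Every step checks out against the paper's definitions (in particular, membership of $f^m(S)$ in $\konf(S;f)$ uses exactly the clause ``$f^n(S)\subset K$ for some $n\ge0$'' with $n=m$), and you are right that full confinement, not eventual confinement, is what makes the union collapse; the eventually-confining case requires the longer bookkeeping carried out in the paper's Theorem \ref{thm:EvConfRel}, which in fact reduces that case to arguments very much like yours.
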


It turns out $S$ does not even need to be confining. It is enough that a subset is eventually confining.

\begin{thm}\label{thm:EvConfRel}If $f$ is a closed relation over a compact Hausdorff space $X$ and $U\cl{\subset} X$ is {\em eventually confining} with respect to $f$, then
	\[\hat{\om}(U;f)= \om(U;f).\]
\end{thm}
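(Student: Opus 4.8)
The plan is to reduce to the already-established confining case (Theorem \ref{thm:EquivOmLimRelnClConf}) by replacing $U$ with a closed confining set having the same forward ``tail'' behavior. Since $U$ is eventually confining, fix $p>0$ with $f^p(U)\subset U$ and set $B=\bigcup_{k=0}^{p-1}f^k(U)$. By Lemma \ref{lem:evConfUnion}, $B$ is a closed confining set, so Theorem \ref{thm:EquivOmLimRelnClConf} applies and gives $\hat{\om}(B;f)=\om(B;f)$. The whole argument then comes down to proving two transfer equalities, namely $\hat{\om}(U;f)=\hat{\om}(B;f)$ and $\om(U;f)=\om(B;f)$, after which the chain $\hat{\om}(U)=\hat{\om}(B)=\om(B)=\om(U)$ closes the proof.

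For the strict limit sets I would argue at the level of the index sets. Using that $f$ preserves unions (Lemma \ref{lem:relnUandInt}) together with the semigroup property, $f^k(B)=\bigcup_{j=0}^{p-1}f^{k+j}(U)$, so for each $m\ge0$ the forward tail collapses: $\bigcup_{k\ge m}f^k(B)=\bigcup_{i\ge m}f^i(U)$, because as $k$ runs over $\{k\ge m\}$ and $j$ over $\{0,\dots,p-1\}$ the exponent $k+j$ ranges over exactly $\{i\ge m\}$. Taking closures and intersecting over $m$ yields $\hat{\om}(U;f)=\hat{\om}(B;f)$ directly from the definition of the strict omega limit set.

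For the relation omega limit sets I would show that the two defining families of closed confining sets coincide, i.e. $\konf(U;f)=\konf(B;f)$. One inclusion is immediate: $U\subset B$ forces $f^n(U)\subset f^n(B)$, so any $K$ capturing some $f^n(B)$ also captures $f^n(U)$. For the reverse, suppose $K$ is closed confining with $f^n(U)\subset K$; since $K$ is confining, $f(K)\subset K$ and hence $f^\ell(U)\subset K$ for every $\ell\ge n$. Then $f^n(B)=\bigcup_{j=0}^{p-1}f^{n+j}(U)\subset K$ because every exponent $n+j$ is at least $n$, so $K\in\konf(B;f)$. Equality of the families gives equality of their intersections, i.e. $\om(U;f)=\om(B;f)$.

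The step I expect to require the most care is this second transfer equality — in particular making sure the confining hypothesis on the members $K$ of $\konf$ is used correctly to promote the single containment $f^n(U)\subset K$ to the whole shifted window $\{f^{n}(U),\dots,f^{n+p-1}(U)\}$ needed to capture $f^n(B)$. Everything else is bookkeeping with the union-preservation lemma and the semigroup property; I would also note that $\konf(U;f)$ is nonempty (it contains $B$, since $U\subset B$), so the intersections are genuine. Conveniently, this route does not even need the general inclusion $\hat{\om}\subset\om$ of Theorem \ref{thm:InclOmLimReln}, since the chain of equalities establishes the result on the nose.
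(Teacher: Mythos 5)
Your proof is correct, and although it constructs the same auxiliary set as the paper's proof---your $B=\bigcup_{k=0}^{p-1}f^k(U)$ is precisely the paper's $V_0=\bigcup_{0\le k<N}f^k(U)$---the deduction built around that set is genuinely different. The paper never reduces to Theorem \ref{thm:EquivOmLimRelnClConf}. Instead it computes $\hat{\om}(U;f)$ outright: using $f^{l+N}(U)=f^l(f^N(U))\subset f^l(U)$ (eventual confining) together with closedness of images of closed sets over a compact space, each tail closure $\ol{\bigcup_{k\ge n}f^k(U)}$ collapses to the finite union $f^n(V_0)$, so $\hat{\om}(U;f)=\bigcap_{k\ge0}f^k(V_0)$; since the family $\mathfrak{V}=\{f^k(V_0)\}_{k\ge0}$ sits inside $\konf(U;f)$, this intersection contains $\om(U;f)$, and the paper closes with the general inclusion $\hat{\om}(U;f)\subset\om(U;f)$ from Theorem \ref{thm:InclOmLimReln}. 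You run the reduction in the opposite direction: your two transfer equalities $\hat{\om}(U;f)=\hat{\om}(B;f)$ and $\konf(U;f)=\konf(B;f)$ push everything onto the closed confining set $B$, where Theorem \ref{thm:EquivOmLimRelnClConf} finishes. Your route buys three things: it bypasses Theorem \ref{thm:InclOmLimReln} altogether, as you note; your tail identity $\bigcup_{k\ge m}f^k(B)=\bigcup_{i\ge m}f^i(U)$ is purely combinatorial, needing only Lemma \ref{lem:relnUandInt} and the semigroup property (no compactness, no confining hypothesis, and no closures at that step); and the exact equality $\konf(U;f)=\konf(B;f)$ is sharper than the one-sided containment $\mathfrak{V}\subset\konf(U;f)$ that the paper gets by with. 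What the paper's route buys in exchange is independence from Theorem \ref{thm:EquivOmLimRelnClConf}, plus the explicit formula $\hat{\om}(U;f)=\om(U;f)=\bigcap_{k\ge0}f^k(V_0)$ as a byproduct. Both arguments lean on compactness in the same essential way---to know that forward images of the closed set $U$ are closed (Lemma \ref{thm:closedImages})---which enters your argument through Lemma \ref{lem:evConfUnion} and the hypotheses of Theorem \ref{thm:EquivOmLimRelnClConf}, and enters the paper's through the closedness of $V_0$ and of each $f^n(V_0)$.
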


\begin{proof}
Let $U\cl\subset X$ where $X$ is a compact Hausdorff space, $f$ a closed relation on $X$, and $U$ is eventually confining with respect to $f$. Recall that it is always true that $\hat{\om}(U;f)\subset \om(U;f)$. Let $N$ be a positive integer such that $f^n(U)\subset U$ for all $n\ge N$. Set
	\[V_0=\bigcup\limits_{0\le k <N} f^k(U)=U\cup \bigcup\limits_{1\le k< N}f^k(U)\hbox{ and }\mathfrak{V}=\{f^k(V_0)\}_{k\in\ZZp}\]
Note that $V_0$ is closed (a finite union of closed sets), as are all $V\in\mathfrak{V}$ (because $X$ is compact). Also, $V_0$ is confining:
	\[f(V_0)=\left(\bigcup\limits_{1\le k<N}f^k(U)\right)\cup f^N(U)\subset V_0\cup U\subset V_0,\]
and we have already established that the image of a confining set under a closed relation is confining, so every $V\in\mathfrak{V}$ is confining. Furthermore, $f(U)\subset V_0$, so $f^{k+1}(U)\subset f^k(V_0)$. Therefore, $\mathfrak{V}\subset\konf$, where $\konf$ is as defined in Definition \ref{defn:OmegaReln}:
	\begin{align*}\konf_f = \{& K : K\hbox{ is a closed confining set such that there exists some }n\ge0 \\
				  &\hbox{ for which }f^n(U)\subset K\}.\end{align*}
Because $\mathfrak{V}\subset\konf$, $\bigcap\limits_{V\in\mathfrak{V}}V\supset\bigcap\limits_{K\in\konf}K=\om(U;f)$. 

We wish to show that $\bigcap\limits_{V\in\mathfrak{V}}V\subset \hat{\om}(U;f)=\bigcap\limits_{n\ge0} \overline{\bigcup\limits_{k\ge n} f^k(U)}$. Let an arbitrary $\overline{\bigcup\limits_{k\ge n} f^k(U)}$ be given. Because $f^{l+N}(U)=f^l(f^N(U))\subset f^l(U),$
	\begin{align*}	\overline{\bigcup\limits_{k\ge n} f^k(U)}=&\overline{f^n(U)\cup f^{n+1}(U)\cup\ldots\cup 						f^{n+N-1}(U)\cup\ldots}\\
					=&\overline{f^n(U)\cup f^{n+1}(U)\cup\ldots\cup f^{n+N-1}(U)} \\
					=&f^n(U)\cup f^{n+1}(U)\cup\ldots\cup f^{n+N-1}(U)\\
					=&f^n\left(U\cup f(U)\cup \ldots f^{N-1}(U)\right)\\
					=&f^n(V_0)\in\mathfrak{V}
\end{align*}
Thus, $\hat{\om}(U;f)=\bigcap\limits_{V\in\mathfrak{V}}V\supset\bigcap\limits_{K\in\konf}K=\om(U;f).$
\end{proof}

The following do not really follow a narrative flow, outside of \ref{bk:McGeheeAttractors}, but they will also be used in Sections \ref{sec:OmLimSetsDisp} and \ref{sec:AttAndAttBlk}

\begin{lem}[Lemma 5.5 in \ref{bk:McGeheeAttractors}]\label{thm:KappaInclReln}
If $S$ is a subset of $X$ with $f$ and $g$ relations over $X$ then the following properties hold.
\begin{enumerate}
\item $S'\subset S\implies \frak{K}(S;f)\subset\frak{K}(S';f).$
\item $g\subset f\implies \frak{K}(S;f)\subset\frak{K}(S;g).$
\item $\frak{K}(f(S);f)=\frak{K}(S;f).$
\item $K\in\frak{K}(S;f)\implies f(K)\in\frak{K}(S;f).$
\item If $\frak{F}$ is a finite subset of $\frak{K}(S;f),$ then $\bigcap\frak{F}\in\frak{K}(S;f).$
\end{enumerate}
\end{lem}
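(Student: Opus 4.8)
The plan is to verify, for each of the five items, that the set in question meets the two defining requirements of membership in $\frak{K}(\cdot\,;f)$: being a closed confining set, and containing some forward iterate $f^n(\cdot)$. Two elementary facts will do essentially all the work. First, monotonicity of relations (Lemma \ref{lem:relninclusion}): $A\subset B$ gives $f^n(A)\subset f^n(B)$, while $g\subset f$ gives $g^n\subset f^n$ (by an easy induction on composition of relations) and hence $g^n(A)\subset f^n(A)$. Second, a confining set $K$ absorbs all of its forward iterates: from $f(K)\subset K$ one gets $f^j(K)\subset K$ for every $j\ge0$ by iterating Lemma \ref{lem:relninclusion}(1). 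I would record this absorption fact once at the outset and reuse it everywhere.

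For (1), given $K\in\frak{K}(S;f)$ with $f^n(S)\subset K$, monotonicity applied to $S'\subset S$ yields $f^n(S')\subset f^n(S)\subset K$; the set $K$ itself is unchanged, so it remains closed and confining and thus $K\in\frak{K}(S';f)$. For (2), given $K\in\frak{K}(S;f)$, the inclusion $g\subset f$ gives $g(K)\subset f(K)\subset K$ (so $K$ is confining for $g$) and $g^n(S)\subset f^n(S)\subset K$, so the same $n$ certifies $K\in\frak{K}(S;g)$.

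For (3), I prove both inclusions by shifting the index. If $K\in\frak{K}(S;f)$ with $f^n(S)\subset K$, then $f^{n+1}(S)=f^n(f(S))\subset K$ whenever $n\ge1$; the only subtle case is $n=0$, where $S\subset K$ and the confining property is needed to push $f(S)\subset f(K)\subset K$, giving $K\in\frak{K}(f(S);f)$. The reverse inclusion is immediate since $f^n(f(S))=f^{n+1}(S)$ already has the required form. For (4), $f(K)$ is confining because $f(f(K))\subset f(K)$ follows by monotonicity from $f(K)\subset K$, and $f^{n+1}(S)\subset f(K)$ follows from $f^n(S)\subset K$; the closedness of $f(K)$ is the one place where I must invoke Lemma \ref{thm:closedImages} (the image of a closed set under a closed relation on a compact space is closed). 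For (5), writing $\frak{F}=\{K_1,\dots,K_r\}$ with $f^{n_i}(S)\subset K_i$, the intersection is closed, it is confining by Lemma \ref{lem:relnUandInt}(2) since $f(\bigcap_i K_i)\subset\bigcap_i f(K_i)\subset\bigcap_i K_i$, and taking $m=\max_i n_i$ gives $f^m(S)\subset K_i$ for each $i$ through the absorption fact, hence $f^m(S)\subset\bigcap\frak{F}$.

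None of these steps is genuinely deep; the substance is careful bookkeeping against the definition of $\frak{K}$. The two places demanding attention are the $n=0$ edge case in (3), which is precisely where the confining hypothesis becomes indispensable, and the closedness claim in (4), which is the sole step that secretly depends on the compactness of $X$ (via Lemma \ref{thm:closedImages}) rather than on pure relation algebra. Finiteness in (5) matters only insofar as it guarantees that $\max_i n_i$ exists; I would flag that an infinite intersection, while still closed and confining, could fail the iterate condition, so the finiteness hypothesis cannot simply be dropped.
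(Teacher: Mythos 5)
Your proof is correct, but there is nothing in the paper to compare it against line by line: the paper does not prove this lemma at all, it imports it verbatim as Lemma 5.5 of \ref{bk:McGeheeAttractors}. The closest in-paper relative is the multiflow analogue of item (5) (the finite-intersection theorem for $\konf(U;\Phi)$ attributed to K.~J.~Meyer), and your argument for (5) is exactly that proof transposed to relations: closedness of a finite intersection, confining via Lemma \ref{lem:relnUandInt}(2), and a maximal witnessing index made to work by your absorption fact. Two further comments. First, your flag on item (4) is the substantive contribution: as transcribed here the lemma hypothesizes only that $f$ is a relation on $X$, and under that hypothesis alone $f(K)$ need not be closed; closedness genuinely requires $f$ to be a closed relation on a compact space (Lemma \ref{thm:closedImages}), which are standing hypotheses in \ref{bk:McGeheeAttractors} that the statement above silently drops, so your proof is really a proof of the lemma under those restored hypotheses. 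Second, a small wrinkle in your item (3): you assert $f^{n+1}(S)=f^{n}(f(S))\subset K$ ``whenever $n\ge1$,'' but that inclusion is not a consequence of $f^{n}(S)\subset K$ alone --- it uses the confining property, via $f^{n+1}(S)=f(f^{n}(S))\subset f(K)\subset K$, and then it holds for all $n\ge0$ with no case split; alternatively, the confining-free route is the index shift with witness $m=n-1$, i.e.\ $f^{n-1}(f(S))=f^{n}(S)\subset K$, and only under that choice is $n=0$ genuinely special. Since every $K\in\frak{K}(S;f)$ is confining by definition, either reading closes the argument, so this is a presentational conflation rather than a gap. Your remark that finiteness in (5) cannot be dropped also matches the paper's own observation (made in the multiflow setting) that infinite intersections may fail the forward-image condition.
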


\begin{cor}[Corollary 5.6 from \ref{bk:McGeheeAttractors}]
If $f$ is a relation over $X$, $S\subset X$, and $U$ is a neighborhood of $\om(S)$, then there exists a $K\in \frak{K}(S;f)$ such that $K\subset U$.
\end{cor}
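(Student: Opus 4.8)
The plan is to run a standard compactness argument on the defining intersection $\om(S)=\bigcap\frak{K}(S;f)$ and then invoke the finite-intersection stability from Lemma~\ref{thm:KappaInclReln}(5). First I would reduce to the case of an open neighborhood: since $U$ is a neighborhood of $\om(S)$, its interior $O=\Int(U)$ is open and still contains $\om(S)$, so it suffices to produce $K\in\frak{K}(S;f)$ with $K\subset O$.

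Next I would pass to complements. By definition $\om(S)=\bigcap_{K\in\frak{K}(S;f)}K\subset O$, so the closed set $X\setminus O$ is disjoint from $\bigcap_{K\in\frak{K}(S;f)}K$; equivalently, the open sets $\{X\setminus K\}_{K\in\frak{K}(S;f)}$ cover $X\setminus O$. Here I would use that $X$ is compact (as in the surrounding theorems), so the closed subset $X\setminus O$ is compact, and extract a finite subcover $X\setminus O\subset\bigcup_{i=1}^m(X\setminus K_i)$. Taking complements turns this into $\bigcap_{i=1}^m K_i\subset O$.

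Finally I would assemble the answer. The collection $\{K_1,\dots,K_m\}$ is a finite subset of $\frak{K}(S;f)$, so by Lemma~\ref{thm:KappaInclReln}(5) the intersection $K:=\bigcap_{i=1}^m K_i$ again lies in $\frak{K}(S;f)$; and by the previous step $K\subset O\subset U$, which is exactly what we want. Note that $\frak{K}(S;f)$ is nonempty, since $X$ itself is a closed confining set containing $f^0(S)=S$, so both the cover and the degenerate empty-subcover case cause no trouble.

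The only real content is the compactness step -- everything else is bookkeeping -- so the hard part is recognizing that $\om(S)$ is presented as an intersection of \emph{closed} sets in a compact space, which is precisely what lets a neighborhood of the intersection already be witnessed by a finite subintersection; Lemma~\ref{thm:KappaInclReln}(5) is then what upgrades that finite subintersection back into a genuine member of $\frak{K}(S;f)$. I would also want to confirm that the ambient space is assumed compact (Hausdorff) here, as in Theorems~\ref{thm:InclOmLimReln}--\ref{thm:EquivOmLimRelnClConf}, since the argument collapses without it.
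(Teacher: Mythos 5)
Your proposal is correct and follows essentially the same route as the source of this cited result: the compactness step you carry out by hand (complements of the closed sets $K\in\frak{K}(S;f)$ covering $X\setminus\Int(U)$, finite subcover, complement back) is exactly Lemma~\ref{thm:finIntnNbhd} (Theorem 2.8 of \ref{bk:McGeheeAttractors}), and combining that with Lemma~\ref{thm:KappaInclReln}(5) to upgrade the finite subintersection to a member of $\frak{K}(S;f)$ is precisely how the corollary is obtained there. Your two side remarks are also well placed: the statement as quoted omits the hypothesis that $X$ is compact, which the argument genuinely needs, and nonemptiness of $\frak{K}(S;f)$ holds because $X$ itself is a closed confining set containing $f^0(S)=S$.
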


\begin{lem}[Lemma 7.6 from \ref{bk:McGeheeAttractors}]\label{thm:RelnsOmCandInt}
If $B$ is an attractor block ($f(\ol{B})\subset\Int(B)$), then the following statements are true. 
\begin{enumerate}
	\item $B$ is a confining set.
	\item $\Int(B)$ and $\ol{B}$ are attractor blocks.
	\item $\om(\Int(B)) = \om(B) = \om(\ol{B})\subset \ol{B}$.
\end{enumerate}
\end{lem}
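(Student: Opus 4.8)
The plan is to dispatch parts (1) and (2) by pure monotonicity of $f$ together with the idempotence of closure and interior, and then to reduce part (3) to the bookkeeping lemmas for $\frak{K}$. For (1), since $B\subset\ol{B}$, Lemma \ref{lem:relninclusion}(1) gives $f(B)\subset f(\ol{B})\subset\Int(B)\subset B$, so $B$ is confining with nothing further to check.

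For (2), I would treat the two sets separately. For $\ol{B}$, note that $\ol{\ol{B}}=\ol{B}$, and $B\subset\ol{B}$ forces $\Int(B)\subset\Int(\ol{B})$, so $f(\ol{B})\subset\Int(B)\subset\Int(\ol{B})$, which is exactly the attractor-block condition $f(\ol{\ol{B}})\subset\Int(\ol{B})$. For $\Int(B)$, since $\Int(B)\subset\ol{B}$ and $\ol{B}$ is closed we have $\ol{\Int(B)}\subset\ol{B}$, whence $f(\ol{\Int(B)})\subset f(\ol{B})\subset\Int(B)=\Int(\Int(B))$ by idempotence of the interior. Thus both $\Int(B)$ and $\ol{B}$ are attractor blocks, and in particular both are confining by part (1) applied to them.

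The interesting part is (3). First I would record that $\om$ is monotone: combining Lemma \ref{thm:KappaInclReln}(1) (namely $S'\subset S\implies\frak{K}(S;f)\subset\frak{K}(S';f)$) with Lemma \ref{lem:intersections}(1) yields $S'\subset S\implies\om(S')\subset\om(S)$. Applied to $\Int(B)\subset B\subset\ol{B}$, this gives the one-directional chain $\om(\Int(B))\subset\om(B)\subset\om(\ol{B})$. To close the loop I would invoke the shift invariance $\frak{K}(f(S);f)=\frak{K}(S;f)$ of Lemma \ref{thm:KappaInclReln}(3), which immediately gives $\om(f(S))=\om(S)$; taking $S=\ol{B}$ and using the attractor-block hypothesis $f(\ol{B})\subset\Int(B)$ together with monotonicity yields $\om(\ol{B})=\om(f(\ol{B}))\subset\om(\Int(B))$. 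This collapses the chain into a cycle, forcing $\om(\Int(B))=\om(B)=\om(\ol{B})$. Finally, $\ol{B}$ is a closed confining set satisfying $f^0(\ol{B})=\ol{B}\subset\ol{B}$, so $\ol{B}\in\frak{K}(\ol{B};f)$ and hence $\om(\ol{B})=\bigcap\frak{K}(\ol{B};f)\subset\ol{B}$; alternatively one may cite Theorem \ref{thm:EquivOmLimRelnClConf} to write $\om(\ol{B})=\bigcap_{n\ge0}f^n(\ol{B})\subset\ol{B}$.

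The main obstacle is the reverse inclusion in (3); parts (1), (2), and the forward chain in (3) are all routine monotonicity. The crux is recognizing that the attractor-block hypothesis $f(\ol{B})\subset\Int(B)$ lets a single application of $f$ (which leaves $\om$ unchanged, by the shift lemma) carry the largest set $\ol{B}$ down inside the smallest set $\Int(B)$, so that the a priori one-directional chain of inclusions becomes a cycle of equalities.
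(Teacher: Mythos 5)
Your proof is correct, but note that the paper does not actually prove this lemma: it is imported verbatim as Lemma~7.6 of \ref{bk:McGeheeAttractors}, with the reader sent to that reference for the proof. So what your argument buys is a self-contained derivation from results the paper does quote, namely Lemma \ref{lem:relninclusion} (monotonicity of images), Lemma \ref{lem:intersections} (intersection bookkeeping), and Lemma \ref{thm:KappaInclReln} (the $\frak{K}$-set properties), together with the definition $\om(S;f)=\bigcap\frak{K}(S;f)$. Parts (1) and (2) are indeed pure monotonicity plus idempotence of closure and interior, and your key move in (3) is exactly the right one: the shift invariance $\frak{K}(f(S);f)=\frak{K}(S;f)$ gives $\om(f(\ol{B}))=\om(\ol{B})$, and the attractor-block hypothesis $f(\ol{B})\subset\Int(B)$ then closes the chain $\om(\Int(B))\subset\om(B)\subset\om(\ol{B})\subset\om(\Int(B))$ into equalities. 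The membership $\ol{B}\in\frak{K}(\ol{B};f)$ (closed, confining since $f(\ol{B})\subset\Int(B)\subset\ol{B}$, and $f^0(\ol{B})\subset\ol{B}$) settles the final inclusion $\om(\ol{B})\subset\ol{B}$. One caveat: prefer this $\frak{K}$-membership route over your alternative ending via Theorem \ref{thm:EquivOmLimRelnClConf}, since that theorem carries the standing hypotheses of a \emph{closed} relation on a \emph{compact Hausdorff} space, which the lemma as stated here does not assume; your primary argument needs no compactness or closedness of $f$ at all, and is therefore both more elementary and more general than the cited source's setting.
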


In studying the structure of a dynamical system, it is useful to look at isolated invariant sets. More precisely, we identify the {\it isolating neighborhoods} associated to those invariant sets, but the invariant sets themselves are the motivation. 
To properly define isolated invariant sets, in general, we need to understand what it is to be a {\it maximal invariant set}.


\begin{defn}\label{maxinvset}
An invariant set can also be {\bf maximal}, relative to the set containing it. Let $U\subset X$, where $X$ is the topological space. Let $S\subset U$ be an invariant set. Then $S$ is maximal if there is no invariant set $T$ such that $S\subsetneq T\subset U$.
\end{defn}

\begin{defn}\label{defn:IsoInvSet}
A compact set $S$ in a space $X$ is an {\bf isolated invariant set} if it is invariant and if there exists some neighborhood $U$ of $S$ in which $S$ is the maximal invariant set. In this case, $U$ is an {\bf isolating neighborhood} of $S$
\end{defn}

Particular examples include attractors (which are the omega limit set of a neighborhood $U$), repellers (which are the alpha limit set of a neighborhood $U$), and saddle points. An isolating neighborhood will have only one maximal invariant set, but an isolated invariant set may have many isolating neighborhoods.


\begin{ex}\label{ex:NoIso}
Not all invariant sets - even compact ones - can be isolated. For instance, let $\vp: \RR\times \RR^2 \rarrow \RR^2$ be the solution to
	\[ \left[\begin{array}{c} \dot{x} \\ \dot{y} \end{array}\right] = \left[\begin{array}{cc} 0 & 1 \\ -1 & 0 \end{array}\right]\left[\begin{array}{c} x \\ y \end{array}\right],\] 
as in Example \ref{ex:rotnPerOrb} (a rotation with period $2\pi$ about the origin). Then,
	\[\vp^t(x_0,y_0)= \left[\begin{array}{cc} \cos t & \sin t \\ -\sin t & \cos t \end{array}\right]\left[\begin{array}{c} x_0 \\ y_0 \end{array}\right]\] 
for any $(x_0,y_0)\in\RR^2$. See Figure \ref{fig:rotnPerOrb} for a visual representation of a few orbits.

Let $S$ be any nonempty compact invariant set for $\vp$. In this system, any compact invariant set is a union of periodic orbits. Then, $S\subset B_r(0,0)$ for some $r\ge0$. Note that any neighborhood $U$ of $S$ will contain some larger closed ball $\overline{B_s}(0,0)$ (with $s>r$). This new ball contains orbits, which were not in $S$. Explicitly, $\bigcup\limits_{t\in\RR}(s\cos t, s\sin t)\subset \overline{B_s}(0,0)\setminus S$, making $\overline{B_s}(0,0)$ a new, larger invariant set in $U$. Therefore, $S$ is not maximal in $U$, and it cannot be isolated.

\begin{figure}\centering
  \includegraphics[width=.4\linewidth]{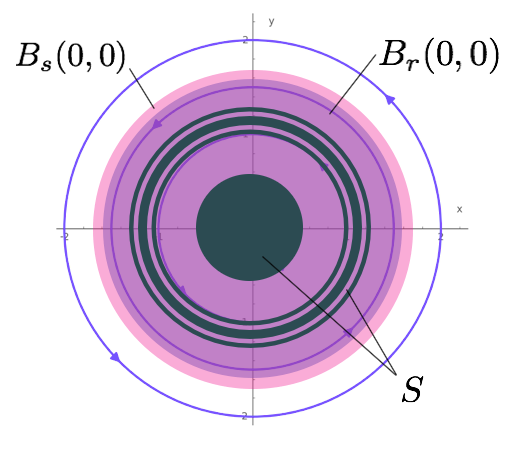}
  \caption{Illustration of $S$ and the surrounding balls for Example \ref{ex:NoIso}.}
  \label{fig:NoIso}
\end{figure}

\FloatBarrier
\end{ex}

\begin{defn}\label{defn:Att} Let $\phi$ (or $f$) be a flow (or map) over $X$. Then, $A\subset X$ is an {\bf attractor}, provided it is compact, and
	\begin{itemize}
		\item $A$ is invariant with respect to $\phi$ (or $f$),
		\item $A=\om(B)$ for some neighborhood $B$ of $A$.
	\end{itemize}
\end{defn}

The neighborhood spoken of in Definition \ref{defn:Att} is an example of an isolating neighborhood, or more specifically, an {\bf attracting neighborhood} for $A$. This is actually one definition of an attracting neighborhood, and one which we will use to construct similar definitions for multiflows.

\begin{defn}
	Let $\phi$ (or $f$) be a flow (or map) over $X$, and let $B\subset X$ be a compact set such that $\om(B)\subset \Int(B)$. Then $B$ is an {\bf attracting neighborhood}. Moreover, we say $B$ is an attracting neighborhood associated to $\om(B)$, and $\om(B)$ is the attractor associated with $B$.
\end{defn}

Another way to talk about attractors is to speak of {\it Lyapunov stable} and {\it asymptotically stable} sets, but we will not explore those ideas here.

One of the reasons we consider attracting neighborhoods is that for maps and flows there is a continuation property, and an upper-semicontinuity. That is, if we have a flow $\phi$ with an isolating neighborhood $N$ (attracting neighborhoods are a type of isolating neighborhood), and we consider any flow $\psi$ which is ``close enough" to $\phi$, then $N$ serves as an isolating neighborhood for $\psi$.

\begin{ex}\label{ex:InvSetFlow}
Let $x\in \RR$ and define $\vp$ to be a solution to
\[\dot{x}=(x-1)(x+1) = x^2-1.\]
So, $\vp(t,x)=\frac{ke^{2t+1}}{1-ke^{2t}}$ for any $k\in \RR$. The minimal invariant sets are the equilibria $S_1=\{-1\}$ and $S_2=\{1\}.$ In fact, $S_1$ is a sink (an attracting equilibrium), and $S_2$ is a source (a repelling equilibrium). We can choose an attracting neighborhood $B = [-2,0]$, which contains the attractor $S_1$. Let's call $\dot{x}_\ep = x^2-1+\ep$, and so long as $|\ep|<1$, $B$ is still an attracting neighborhood containing a sink, which behaves similarly to $S_1$. That is, $B$ remains an attracting neighborhood for systems near $\dot{x}=x^2-1$.
\end{ex}

We now generalize these notions to cover invariant sets, which are not in general attractors.

\begin{defn}\label{defn:inv}
Let $\vp$ be a flow and $f$ be a map on $X$, with $N\subset X$. Then, the {\bf maximally invariant subset of $N$} is defined in the following ways:
\[\Inv(N,\vp):=\{x\in N:~\phi(\RR,x)\subset N\},\]
and likewise
\[\Inv(N,f):=\{x\in N:~ f^n(x)\subset N,~n\in\NN\}.\]
\end{defn}



\begin{defn}
A nonempty compact set $N\subset \RR^n$ is an {\bf isolating set / isolating neighborhood} if
\[\Inv(N,\vp)\subset N^\circ.\]
\end{defn}

\begin{defn}\label{isoblock}
Let $S$ be a compact invariant set. An {\bf isolating neighborhood associated to $S$} is a set $N$ such that
\[S\subset N^\circ, ~ \Inv(N)= S.\]
If such a neighborhood exists, $S$ is called an {\bf isolated invariant set}.
\end{defn}

An attractor is a particular type of invariant set, and an attracting neighborhood is a particular type of isolating neighborhood. 

\subsection{Confining and Invariant Sets for Multiflows}

Recall that a set $U$ is confining under relation $f$ if $f(U)\subset U$. This notion is similar to forward invariance for maps. We now extend this and related notions to continuous time. Naturally, the terms in Definition \ref{defn:invariance} already carry over to any particular fixed time relation.

\begin{defn}\label{defn:DispInvariance} If $\Phi$ is a multiflow over $X$ and $S\subset X$, then we use the following to describe $S$:
	\begin{enumerate}
		\item If $\Phi^t(S)\subset S$ for all $t\ge0$, then $S$ is called a {\bf confining} set for $\Phi$;
		\item If $(\Phi^t)^*(S)\subset S$ for all $t\ge0$, then $S$ is called a {\bf rejecting} set for $\Phi$;
		\item If $\Phi^t(S)\supset S$ for all $t\ge0$, then $S$ is called a {\bf backward complete} set for $\Phi$;
		\item If $(\Phi^t)^*(S)\supset S$ for all $t\ge0$, then $S$ is called a {\bf forward complete} set for $\Phi$;
		\item If $\Phi^t(S) = S$ for all $t\ge0$, then $S$ is called a {\bf invariant} set for $\Phi$;
		\item If $(\Phi^t)^*(S) = S$ for all $t\ge0$, then $S$ is called a {\bf *-invariant} set for $\Phi$;
	\end{enumerate}
\end{defn}

For discussion on why these terms were chosen for relations, refer to \ref{bk:McGeheeAttractors}. We also have the expanded notions of eventually confining, strictly confining, etc.

\begin{defn}
Let $\Phi$ be a multiflow over $X$. Then for $U\subset X$,
\begin{itemize}
	\item $U$ is {\bf confining} if $\Phi^t(U)\subset U$ for all $t>0.$
	\item $U$ is {\bf strictly confining} if $\Phi^t(U)\subset \Int(U)$ for all $t>0.$
	\item $U$ is {\bf eventually confining} if there exists some $T>0$ for which $\Phi^t(U)\subset U$ for all $t\ge T$.
	\item $U$ is {\bf eventually strictly confining} if there exists some $T>0$ for which $\Phi^t(U)\subset \Int(U)$ for all $t\ge T$.
\end{itemize}
\end{defn}

It is possible to have a set, which is immediately confining, but only eventually strictly confining, as this example demonstrates.

\begin{ex}\label{ex:evStrConf}
Let $X=\RR^2$, and let
	\[ \dot{\left[ \begin{array}{c}x \\ y\end{array}\right]} = \left[ \begin{array}{cc} -1 & -1 \\ 1 & -1 \end{array}\right] \left[ \begin{array}{c}x \\ y\end{array}\right] \]
define a vector field, as in Example \ref{ex:evStrConfReln}. The solutions are of the form 
	\[ \left[ \begin{array}{c}x(t) \\ y(t)\end{array}\right] = e^{-t}\left[ \begin{array}{cc}\cos t & -\sin t\\ \sin t & \cos t \end{array}\right] \left[ \begin{array}{c}x_0 \\ y_0\end{array}\right],\]
where $\left[ \begin{array}{c}x_0 \\ y_0\end{array}\right]$ is the initial point.

Take $S$ to be the closed ``heart-shaped" region bounded by the curves $\left[ \begin{array}{c} x\\y \end{array}\right] = e^{-\theta} \left[ \begin{array}{c} \cos \theta \\ \pm\sin \theta\end{array}\right]$, pictured in Figure \ref{fig:RegionConfEvStrConf}. This region will be confining for $0\le t$, but it is only strictly confining for $t>\pi$. Thus, $S$ is confining and eventually strictly confining, but it fails to be strictly confining.

\begin{figure}
    \centering
    \subfigure[A look at the whole set described in Example \ref{ex:evStrConf}.]
    {
        \includegraphics[width=.35\linewidth]{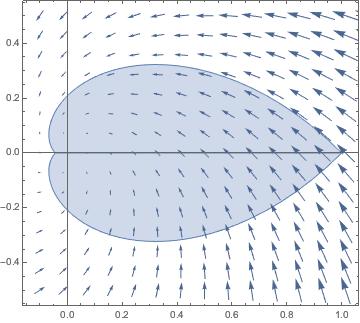}
        \label{fig:ConfEvStrConf}
    }
    \subfigure[A view nearer the origin in Example \ref{ex:evStrConf}.]
    {
        \includegraphics[width=.35\linewidth]{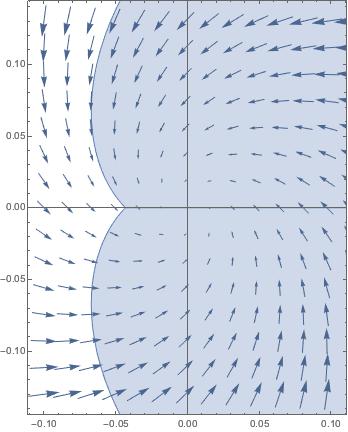}
        \label{fig:ConfEvStrConfZoom}
    }
    \caption{A set which is confining but only eventually strictly confining.}
    \label{fig:RegionConfEvStrConf}
\end{figure}
\end{ex}

\begin{ex}
Example \ref{ex:NoIso} gives a vector field for which any closed circular region centered at the origin is confining (and in fact invariant) but never strictly confining.
\end{ex}

It is good to know how the properties of a set under a multiflow carry over to the properties of that set under the fixed time relations, and how this affects forward images, as Theorems \ref{ConfDispConfRel} and \ref{confAndImages} address.

\begin{thm}\label{ConfDispConfRel}
Let $\Phi$ be a multiflow over a space $X$ and $U\subset X$. Then the following properties hold true for any fixed time relations $\Phi^t$, $t>0$:
\begin{enumerate}
\item $U$ is confining for $\Phi\implies U$ is confining for $\Phi^t$;
\item $U$ is strictly confining for $\Phi\implies U$ is strictly confining for $\Phi^t$;
\item $U$ is eventually confining for $\Phi\implies U$ is eventually confining for $\Phi^t$;
\item $U$ is eventually strictly confining for $\Phi\implies U$ is eventually strictly confining for $\Phi^t$.
\end{enumerate}
\end{thm}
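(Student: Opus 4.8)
The plan is to observe that each of the four implications reduces, via the semigroup property, to a statement about sampling the multiflow at a fixed time step. For parts (1) and (2) there is essentially nothing to do: fixing any $t>0$, the hypothesis that $U$ is confining (respectively strictly confining) for $\Phi$ asserts $\Phi^t(U)\subset U$ (respectively $\Phi^t(U)\subset\Int(U)$) for \emph{all} $t>0$, so in particular it holds for the single $t$ we fixed, and that is precisely the statement that $U$ is confining (respectively strictly confining) for the relation $\Phi^t$. These two parts therefore follow immediately by specializing the quantifier over $t$ to one value.

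The substance of the theorem is in parts (3) and (4), and the key identity is that iterating the fixed-time relation samples the multiflow at integer multiples of $t$. Concretely, the semigroup property $\Phi^{s+t}=\Phi^s\circ\Phi^t$ gives, by a one-line induction on $n$, that $(\Phi^t)^n=\Phi^{nt}$ for every $n\in\ZZp$. I would state and verify this first. Then, assuming $U$ is eventually confining for $\Phi$, fix a threshold $T>0$ with $\Phi^s(U)\subset U$ for all $s\ge T$, and fix any $t>0$. By the Archimedean property choose an integer $N\ge T/t$. For every $n\ge N$ we then have $nt\ge Nt\ge T$, so $(\Phi^t)^n(U)=\Phi^{nt}(U)\subset U$, which is exactly eventual confinement of $U$ for the relation $\Phi^t$. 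Part (4) is identical, replacing the target $U$ by $\Int(U)$ on the right-hand side throughout.

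The only point requiring care, and it is minor, is the bookkeeping that converts the continuous threshold $T$ for the multiflow into an integer iteration threshold $N$ for the relation; this is handled by the choice $N=\lceil T/t\rceil$. No compactness or topological hypotheses on $X$ are needed, since the argument is purely algebraic in the semigroup parameter, and the strict versions require no extra work because $\Int(U)$ enters only as a fixed target set. Thus the main (modest) obstacle is simply recognizing and recording the identity $(\Phi^t)^n=\Phi^{nt}$, after which all four parts are routine.
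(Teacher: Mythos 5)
Your proposal is correct and follows essentially the same route as the paper's proof: the paper also fixes the multiflow threshold $\tau$, chooses an integer $N$ with $Nt\ge\tau$, and uses the identity $f^n(U)=\Phi^{nt}(U)$ to conclude, treating the remaining cases as ``mutatis mutandis'' just as you treat (1)--(2) as immediate specializations. Your explicit verification of $(\Phi^t)^n=\Phi^{nt}$ by induction is a small point of added rigor, but the argument is the same.
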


\begin{proof}

Let $U\subset X$ and $\Phi$ be as described in the theorem. We start with (4) and assume that $U$ is eventually strictly confining in the multiflow sense for $\Phi$. So, there is some $\tau\in(0,\infty)$ such that $\Phi^t(U)\subset \Int(U)$ for all $t\ge\tau$. Let $f=\Phi^t$ for some fixed $t\in(0,\infty)$. 
We find $N$ such that $Nt\ge \tau$. Then, for all $n\ge N$,
	\[f^n(U)=\Phi^{nt}(U)\subset \Int(U)\]
	because $nt\ge Nt\ge \tau$.
	
The other cases follow suit, mutatis mutandis.
\end{proof}

In fact, similar notions are true for the various classifications: if $U$ is eventually rejecting for $\Phi$, then $U$ is eventually rejecting for each $\Phi^t$, etc. 

\begin{thm}\label{confAndImages}
Let $\Phi$ be a multiflow over a space $X$. Then, consider $U\subset X$ and all forward images $\Phi^s(U)$, where $s\ge0$. The following are true:
	\begin{enumerate}
		\item If $U$ is confining for $\Phi$, then $\Phi^s(U)$ is confining for $\Phi$.
		\item If $U$ is eventually confining for $\Phi$, then $\Phi^s(U)$ is eventually confining for $\Phi$.
	\end{enumerate}
\end{thm}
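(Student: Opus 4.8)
The plan is to reduce both statements to two facts that are already available in the excerpt: the semigroup property of a multiflow, which gives $\Phi^{s+t}=\Phi^s\circ\Phi^t=\Phi^t\circ\Phi^s$ and hence $\Phi^t(\Phi^s(U))=\Phi^{s+t}(U)=\Phi^s(\Phi^t(U))$ for all $s,t\ge0$; and the monotonicity of relation images from Lemma \ref{lem:relninclusion}(1), namely $S\subset T\implies \Phi^s(S)\subset\Phi^s(T)$. The whole proof is essentially the observation that one can ``pull $\Phi^s$ to the outside'' of an image and then feed the hypothesis on $U$ into the monotone relation $\Phi^s$.

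For part (1), I would fix $s\ge0$ and an arbitrary $t>0$, and rewrite the relevant image using the semigroup property as $\Phi^t(\Phi^s(U))=\Phi^s(\Phi^t(U))$. Since $U$ is confining we have $\Phi^t(U)\subset U$, and applying $\Phi^s$ to both sides via Lemma \ref{lem:relninclusion}(1) yields $\Phi^s(\Phi^t(U))\subset\Phi^s(U)$. Chaining the equality and the inclusion gives $\Phi^t(\Phi^s(U))\subset\Phi^s(U)$, and since $t>0$ was arbitrary, $\Phi^s(U)$ is confining for $\Phi$. For part (2) the argument is identical, except the hypothesis now only supplies a threshold $T>0$ with $\Phi^t(U)\subset U$ for all $t\ge T$; the same chain of steps then gives $\Phi^t(\Phi^s(U))\subset\Phi^s(U)$ for every $t\ge T$, so $\Phi^s(U)$ is eventually confining with the very same threshold $T$.

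I do not expect a genuine obstacle here: the result is a direct corollary of the semigroup axiom together with monotonicity of images. The one point that requires a little care is the commutativity step, where I must invoke \emph{both} forms of the semigroup identity so that $\Phi^s$ ends up on the outside of the composition rather than the inside; writing $\Phi^{s+t}(U)=\Phi^s(\Phi^t(U))$ (and not $\Phi^t(\Phi^s(U))$) is what lets the hypothesis $\Phi^t(U)\subset U$ enter the estimate cleanly. It is also worth flagging that the threshold $T$ is preserved unchanged under taking forward images, which is why no new lower bound on time needs to be manufactured in part (2).
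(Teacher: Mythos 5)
Your proposal is correct and is essentially identical to the paper's own proof: both use the semigroup identity $\Phi^t(\Phi^s(U))=\Phi^{t+s}(U)=\Phi^s(\Phi^t(U))$ and then monotonicity of relation images (Lemma \ref{lem:relninclusion}) applied to $\Phi^t(U)\subset U$, with the threshold $T$ carried over unchanged in the eventually confining case. No gaps; your write-up merely makes the citation of Lemma \ref{lem:relninclusion}(1) explicit where the paper leaves it implicit.
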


\begin{proof}
	Let $U$ be (eventually) confining. Then, $\Phi^t(U)\subset U$ for all $t\ge0$ (or $t\ge T$ for some $T>0$ in the case of {\it eventually} confining). Consider the forward image $\Phi^s(U)$ for $s>0$. Then,
		\begin{align*}
			\Phi^t(\Phi^s(U)) = \Phi^{t+s}(U) = \Phi^s(\Phi^t(U))\subset \Phi^s(U),
		\end{align*}
for $t\ge0$ (or $t\ge T$ in the case of eventually confining).
\end{proof}

\subsection{Defining the $\konf$-Sets for Multiflows}

In creating omega limit sets for relations, we needed to define $\konf(U;f)$. The same is true for multiflows. We actually have two sets to define - one which functions on fixed-time relations, and one which links to multiflows in a more natural way. The former need not be re-defined; we just adapt it, using fixed-time relations.
\begin{rmk}
Let $\Phi$ be a multiflow over topological space $X$, and let $U\subset X$. Then for $t\ge0$
	\begin{align*}\konf(U;\Phi^t) =& \{K\subset X : K \hbox{ is closed and confining with respect to }\Phi^t,\\
						     & \hbox{ and there is some $n>0$ such that }\Phi^{nt}(U)\subset K\}.
	\end{align*}
\end{rmk}
For the latter - the omega limit set from a continuous time standpoint - we emphasize that $K$ being confining in a multiflows sense means: $\Phi^t(K)\subset K$ for {\it all} $t>0$.
\begin{defn}
Let $\Phi$ be a multiflow over topological space $X$, and let $U\subset X$. Then
	\begin{align*}
	\konf(U;\Phi) =& \{K\subset X : K \hbox{ is closed and confining with respect to }\Phi,\\
	                       &\hbox{ and there is some $t>0$ such that }\Phi^t(U)\subset K\}.
	\end{align*}
\end{defn}
When $U$ is understood, the shorthand of $\konf_{\Phi^t}$ or $\konf_t$ may be used to identify the $\konf$-set for a fixed time relation, and $\konf_\Phi$ or $\konf$ may be used for the $\konf$-set in a multiflow sense.

\begin{lem}
If $K\subset\konf(U;\Phi)$ for multiflow $\Phi$ over topological space $X$ and $U\subset X$, and $T>0$ is a number such that $\Phi^T(U)\subset K$, then $\Phi^t(U)\subset K$ for all $t>T$.
\end{lem}

\begin{proof}
This comes almost immediately from the definition of $K\in\konf(U;\Phi)$. Let $t=T+s>T$ be given, and observe:
	\[	\Phi^{t}(U)=\Phi^s(\Phi^T(U))\subset \Phi^s(K)\subset K,\]
	with the first inequality coming from $K\subset \konf(U;\Phi)$ and the second from the fact that $K$ is confining.
\end{proof}

\begin{thm}{(First proved by K.J. Meyer \ref{pc:KJMeyer})}
If $\fonf\subset\konf(U;\Phi)$ is a finite collection of $\konf$-sets, then $\bigcap\fonf\subset\konf(U;\Phi)$. That is, the intersection of a finite number of $\konf$-sets is itself a $\konf$-set.
\end{thm}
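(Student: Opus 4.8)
The plan is to verify directly the three defining conditions of $\konf(U;\Phi)$ for the intersection. Write $\fonf=\{K_1,\dots,K_m\}$ and set $K=\bigcap\fonf=\bigcap_{i=1}^m K_i$. I must check that $K$ is (i) closed, (ii) confining with respect to $\Phi$, and (iii) absorbing for $U$, i.e.\ that there is some $t>0$ with $\Phi^t(U)\subset K$.

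Conditions (i) and (ii) are the routine part. Closedness is immediate, since $K$ is a finite intersection of closed sets. For confinement I would use monotonicity of the fixed-time relations: since $K\subset K_i$ for each $i$, Lemma \ref{lem:relninclusion}(1) applied to the relation $\Phi^t$ gives $\Phi^t(K)\subset\Phi^t(K_i)$, and because each $K_i$ is confining, $\Phi^t(K_i)\subset K_i$. Hence $\Phi^t(K)\subset K_i$ for every $i$ and every $t>0$, so $\Phi^t(K)\subset\bigcap_i K_i=K$, which is precisely confinement.

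The crux is condition (iii), where I need a \emph{single} time that absorbs $U$ into every $K_i$ at once. Membership $K_i\in\konf(U;\Phi)$ only supplies, for each $i$ separately, some $t_i>0$ with $\Phi^{t_i}(U)\subset K_i$, and a priori these times differ. Here the preceding lemma is decisive: since each $K_i$ is confining, once $\Phi^{t_i}(U)\subset K_i$ holds the same inclusion holds for all larger times as well. I would therefore set $t^\ast=\max_{1\le i\le m} t_i$, which exists precisely because the collection is finite; then for each $i$ either $t^\ast=t_i$ (inclusion holds by hypothesis) or $t^\ast>t_i$ (inclusion holds by the preceding lemma), so in all cases $\Phi^{t^\ast}(U)\subset K_i$, and consequently $\Phi^{t^\ast}(U)\subset K$.

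The main obstacle is exactly this synchronization in (iii), and it is where finiteness is essential: for an infinite family the absorbing times $t_i$ could be unbounded, so no common $t^\ast$ need exist. This is the continuous-time analog of the relation statement Lemma \ref{thm:KappaInclReln}(5), and the argument runs parallel to it, with the time-monotonicity lemma playing the role that semigroup iteration played in the discrete setting.
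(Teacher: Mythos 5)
Your proposal is correct and follows essentially the same route as the paper: both verify closedness, confinement via monotonicity of the fixed-time relations, and then synchronize the absorbing times by taking the maximum over the finite collection. The only cosmetic difference is that you invoke the preceding lemma to push $\Phi^{t_i}(U)\subset K_i$ forward to the common time $t^\ast$, whereas the paper reproduces that lemma's computation inline (writing $t^\ast=\alpha_i+t_i$ and using the semigroup property together with confinement of $K_i$).
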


\begin{proof}
Let $\fonf = \{K_1,\ldots,K_n\}\subset \konf$. Three properties are necessary for $\bigcap\fonf=K\in\konf$:
\begin{itemize}
\item {\bf closed:} The intersection of closed sets is closed.
\item {\bf confining:} Each $K_i$ is confining, so
	$\Phi^t(K)\subset\Phi^t(K_i)\subset K_i$
	for all $i$, meaning $\Phi^t(K)\subset \bigcap\limits_{1\le i \le n}K_i = K$ for all $t>0$.
\item {\bf contains a forward image of $U$:}
For each $i$, $K_i\in\konf$, meaning there is some $t_i>0$ such that $\Phi^{t_i}(U)\subset K_i$. Set $T = \max\{t_i\}$, and there will be an $\alpha_i\ge0$ for each $i$ such that $T=\alpha_i+t_i$. Then,
	\[\Phi^T(U) = \Phi^{\alpha_i}( \Phi^{t_i}(U)) \subset \Phi^{\alpha_i}(K_i)\subset K_i\]
for each $K_i$. This demonstrates the number $T>0$ such that $\Phi^T(U)\subset K.$
\end{itemize}
Thus, $K=\bigcap\fonf\in\konf$.
\end{proof}

\begin{rmk}
It is the last property - that $K$ contains a forward image of $U$, $\Phi^t(U)$, with finite $t$ - which might prevent general intersections of $\konf$-sets from being in $\konf$.
\end{rmk}

\begin{thm}\label{thm:KonfNonEmpty}
Let $\Phi$ be a multiflow over a compact space $X$. Let $U\subset X$. Then $\konf(U;\Phi)\neq\es$; in fact, $X\in\konf(U;\Phi)$.
If in addition $B\overset{cl}{\subset}X$ is an eventually confining set with respect to $\Phi$, then 
	\[K=\bigcap\limits_{s\in[0,T]}\Phi^s(B)\in\konf(B;\Phi),\] 
where $T$ is a number such that $\Phi^t(B)\subset B$ for all $t\ge T$.
\end{thm}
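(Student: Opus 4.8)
The statement splits into the trivial membership $X\in\konf(U;\Phi)$ and the substantive claim about $K$. For the first part, I would simply check the three defining conditions of $\konf(U;\Phi)$ against $X$: the whole space is closed; it is confining, since $\Phi^t(X)\subset X$ holds automatically for a relation on $X$; and $\Phi^t(U)\subset X$ for every $t>0$, so any $t>0$ witnesses the final condition. Hence $X\in\konf(U;\Phi)$ and the set is nonempty.

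For the second part, set $K=\bigcap_{s\in[0,T]}\Phi^s(B)$ and verify the same three conditions. Closedness is the easiest: each $\Phi^s(B)$ is the image of the closed set $B$ under the closed relation $\Phi^s$ over the compact space $X$, hence closed by Lemma \ref{thm:closedImages}, and an intersection of closed sets is closed. For the ``contains a forward image of $B$'' condition, I would show $\Phi^{2T}(B)\subset K$: for each $s\in[0,T]$ the gap $2T-s\ge T$ lets me write $\Phi^{2T}(B)=\Phi^s(\Phi^{2T-s}(B))\subset\Phi^s(B)$, using $\Phi^{2T-s}(B)\subset B$ (eventual confinement) together with monotonicity of images (Lemma \ref{lem:relninclusion}). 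Intersecting over $s$ gives $\Phi^{2T}(B)\subset K$, so $t=2T$ witnesses the condition.

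The real work is showing $K$ is confining, i.e. $\Phi^\tau(K)\subset K$ for every $\tau>0$. The first move is to push $\Phi^\tau$ through the intersection: by Lemma \ref{lem:relnUandInt} and the semigroup property, $\Phi^\tau(K)\subset\bigcap_{s\in[0,T]}\Phi^{\tau+s}(B)=\bigcap_{w\in[\tau,\tau+T]}\Phi^w(B)=:J$. It then remains to prove $J\subset\Phi^{r}(B)$ for every $r\in[0,T]$, which is where the main obstacle lies: the index window $[\tau,\tau+T]$ has been shifted away from $[0,T]$, and for small $\tau$ it need not contain the reference time $r$. The observation that resolves this is that the times congruent to $r$ modulo $T$ are spaced exactly $T$ apart, so the length-$T$ window $[\tau,\tau+T]$ always captures one of them, say $w_0=r+mT$ with integer $m\ge0$. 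Then $J\subset\Phi^{w_0}(B)$, and iterating the nesting $\Phi^{a+T}(B)=\Phi^a(\Phi^T(B))\subset\Phi^a(B)$ a total of $m$ times gives $\Phi^{w_0}(B)\subset\Phi^{r}(B)$. Intersecting over $r\in[0,T]$ yields $J\subset K$, hence $\Phi^\tau(K)\subset K$.

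I expect this last step---controlling the shifted intersection $J$ for arbitrarily small $\tau$---to be the crux. The naive attempt to land a time in $[\tau,\tau+T]$ beyond $r+T$ fails precisely when $\tau<r$, and it is the periodic-spacing argument (equivalently, splitting into the cases $r\ge\tau$, where one uses $K\subset\Phi^{r-\tau}(B)$ directly, and $r<\tau$, where one uses the gap property) that makes confinement go through for all $\tau>0$ rather than merely for $\tau\ge T$. With the three conditions verified, $K\in\konf(B;\Phi)$ follows directly from the definition.
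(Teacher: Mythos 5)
Your proposal is correct and follows essentially the same route as the paper's proof: the trivial verification for $X$, closedness of $K$ via closed images under closed relations, the containment $\Phi^{2T}(B)\subset K$ via the semigroup property, and confinement by matching each reference time $r\in[0,T]$ with a time in a shifted length-$T$ window congruent to $r$ modulo $T$. Your set-level formulation (pushing $\Phi^\tau$ through the intersection and locating $w_0=r+mT$ in $[\tau,\tau+T]$) is just a repackaging of the paper's pointwise two-case argument, where the paper's choice $k=nT+s-t\in[0,T]$ corresponds exactly to your $w_0=\tau+k$.
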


\begin{proof}
Let $X$ and $\Phi$ be as stated in the theorem. Let $U\subset X$. Because $X$ is our ambient space, $\Phi^t(U)\subset X$ for all $t\in[0,\infty).$ Also, $X$ is closed in the relative topology, and $\Phi^t(X)\subset X$, because images under $\Phi$ are only defined on $X$.

More interestingly, let us consider  $T, B$, and $K$ as stated in the theorem. We note first that $K$ is closed because it is the intersection of closed sets. Next, let $s\in[0,T]$ be given. Then
\begin{align*}
	\Phi^{2T}(B)=&\Phi^{s+\alpha}(B),\hbox{ where }\alpha>T \\
			 =&\Phi^s(\Phi^\alpha)(B) \\
			 \subset & \Phi^s(B).
\end{align*}
So, $\Phi^{2T}(B)\subset \bigcap\limits_{s\in[0,T]}\Phi^s(B)=K$.

Finally, we show that $K$ is confining. Let $s\in[0,T]$, $t\in[0,\infty)$ and $x\in K$ be given. We consider two cases:

{\bf $t>s$:} Set $k=nT+s-t$ for some $n\in\ZZ_{>0}$ which gives $k\in[0,T]$. So $t+k=nT+s$. The image of $x$ satisfies:
	\begin{align*}
		\Phi^t(x)\subset&\Phi^t(\Phi^k(B)) \\
				=&\Phi^{t+k}(B)=\Phi^{nT+s}(B)\\
				=&\Phi^s(\Phi^{nT}(B))\subset\Phi^s(B).
	\end{align*}
	
{\bf $t\le s$:} Set $k=t-s\in[0,T]$. Then $x\in\bigcap\limits_{\tau\in[0,T]}\Phi^\tau(B)\subset\Phi^k(B)$, giving us
	\[ \Phi^t(x)\subset\Phi^t(\Phi^k(B)) = \Phi^{t+k}(B) = \Phi^s(B).\]
So, no matter the value of $t\in[0,\infty)$, $\Phi^t(K)\subset\bigcap\limits_{s\in[0,T]}\Phi^s(B)=K$, making $K$ confining. Thus, $K\in\konf(B;\Phi).$
\end{proof}

It is worth noting that an individual element of $\konf$, $K=\bigcap\limits_{s\in[0,T]}\Phi^s(B)$, can be the empty set, in which case $\es\in\konf$ (in fact, the forward image $\Phi^t(B)$ may be $\es$ for some $t>0$).

\begin{cor}\label{thm:ImgKonfInKonf}
Let $\Phi$ be a multiflow over a compact space $X$, and let $B\overset{cl}{\subset}X$ be eventually confining, with $T>0$ a number such that $\Phi^t(B)\subset B$ for all $t\ge T.$ Then all images $\Phi^s(K)$ where $K=\bigcap\limits_{\tau\in[0,T]}\Phi^\tau(B)$ ($s>0$) are also in $\konf(B;\Phi)$.
\end{cor}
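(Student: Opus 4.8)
The plan is to verify directly the three conditions that define membership in $\konf(B;\Phi)$ for the set $\Phi^s(K)$: it must be (i) closed, (ii) confining with respect to $\Phi$, and (iii) it must contain some forward image $\Phi^t(B)$ with $t>0$. Each follows from results already established, the workhorse being Theorem \ref{thm:KonfNonEmpty}, which tells us not only that $K=\bigcap_{\tau\in[0,T]}\Phi^\tau(B)$ lies in $\konf(B;\Phi)$ but, more usefully for us, that $\Phi^{2T}(B)\subset K$.

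Conditions (i) and (ii) are essentially free. For closedness, note $K$ is closed as an intersection of closed sets, and $\Phi^s$ is a closed relation over the compact space $X$; hence Lemma \ref{thm:closedImages} yields $\Phi^s(K)\cl{\subset}X$. For confinement, $K$ is confining for $\Phi$ (shown in Theorem \ref{thm:KonfNonEmpty}), and Theorem \ref{confAndImages}(1) says that the forward image of a confining set is again confining; therefore $\Phi^s(K)$ is confining for $\Phi$.

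The only step with any content is (iii). Here I would begin from the inclusion $\Phi^{2T}(B)\subset K$ supplied by Theorem \ref{thm:KonfNonEmpty}, apply the monotonicity of image under the fixed relation $\Phi^s$ (Lemma \ref{lem:relninclusion}(1)) to obtain $\Phi^s\!\left(\Phi^{2T}(B)\right)\subset\Phi^s(K)$, and then rewrite the left-hand side via the semigroup property as $\Phi^{s+2T}(B)$. Setting $t=s+2T$, which is positive since $s,T>0$, gives $\Phi^t(B)\subset\Phi^s(K)$, exactly the required forward-image condition. With all three conditions satisfied, $\Phi^s(K)\in\konf(B;\Phi)$.

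I do not anticipate a genuine obstacle: this corollary is bookkeeping layered on the preceding theorem. The only place demanding care is the time arithmetic in (iii) — applying monotonicity to the single fixed relation $\Phi^s$ (rather than mixing different time slices) and confirming that the chosen $t=s+2T$ is strictly positive so that it qualifies as a legitimate forward time in the definition of $\konf$.
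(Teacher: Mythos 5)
Your proposal is correct and follows essentially the same route as the paper's own proof: closedness via the image of a closed set under a closed fixed-time relation on a compact space, the forward-image condition via $\Phi^{2T}(B)\subset K$ (from Theorem \ref{thm:KonfNonEmpty}) pushed forward to $\Phi^{s+2T}(B)\subset\Phi^s(K)$, and confinement via the commutation $\Phi^u(\Phi^s(K))=\Phi^s(\Phi^u(K))\subset\Phi^s(K)$. The only cosmetic difference is that you cite Theorem \ref{confAndImages}(1) and Lemma \ref{lem:relninclusion}(1) where the paper inlines those one-line arguments directly.
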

\begin{proof}
Let $X, B, \Phi,$ and $T$ be as stated in the corollary. Recall from Theorem \ref{thm:KonfNonEmpty} that $K\in \konf(B;\Phi)$. Let $s\ge0$.
	\begin{itemize}
		\item $\Phi^s(K)$ is closed (the image of a closed set under a closed fixed time relation over a compact space).
		\item Recall from the proof of Theorem \ref{thm:KonfNonEmpty} that $\Phi^{2T}(B)\subset K$, which implies \[\Phi^{2T+s}(B)=\Phi^s(\Phi^{2T}(B))\subset \Phi^s(K).\]
		\item Finally, let $u>0$. Then $\Phi^u(\Phi^s(K))=\Phi^s(\Phi^u(K))\subset \Phi^s(K)$ because $K$ was confining for $\Phi$ (again, as shown in the proof for Theorem \ref{thm:KonfNonEmpty}). Thus, $\Phi^t(K)$ is confining for $\Phi$. 
	\end{itemize}
Thus, $\Phi^s(K)\in\konf(B;\Phi)$.
\end{proof}

\begin{lem}\label{thm:kSetInclusion}
Let $\Phi$ be a multiflow over a topological space $X$, and let $U\subset V\subset X$. Then $\konf(U;\Phi) \supset \konf(V;\Phi)$. \end{lem}

\begin{proof}
Let $X, U, V,$ and $\Phi$ be as described. Let $\konf_U=\konf(U;\Phi)$, $\konf_V=\konf(V;\Phi)$. Consider some $K\in\konf_V$. Then, it already satisfies the need to be closed and confining. Also, there is some $t>0$ for which $K\supset\Phi^t(V)\supset\Phi^t(U)$ (by Lemma \ref{lem:relninclusion}). So, $K\in\konf_U$, making $\konf_U\supset\konf_V$.
\end{proof}

\subsection{Omega Limit Sets for Multiflows}\label{sec:OmLimSetsDisp}

Now that we have laid out the $\konf$-sets, we may define omega limit sets for multiflows. With the relation $f$ over $X$, there are two objects to consider; with multiflows, that grows to four. There are two, which come naturally from fixed time relations: $\om(U;\Phi^t)$ and $\hat{\om}(U;\Phi^t)$. Recall that the latter is called the {\it strict} omega limit set. Then, there are the related definitions for $\Phi$, which use all values of $t>0$. All of the sets in the table are closed, as they are all intersections of closed sets.

\[\begin{array}{ c || l | l}
  & \hbox{{\bf fixed time}} & \hbox{{\bf multiflow}} \\
  & \hbox{{\bf relation }} \Phi^t                                       & \Phi \\
  \hline\hline &&\\
  \hbox{{\bf strict omega}} &\hat{\omega}(U;\Phi^t) \hspace{.65in}&\hat{\omega}(U;\Phi) \hspace{.5in}\\ \hbox{{\bf limit set}} & =\bigcap\limits_{m\ge0}\ol{\bigcup\limits_{n\ge m}\Phi^{n t}(U)} &= \bigcap\limits_{t\ge0}\ol{\Phi^{[t,\infty)}(U)} \\ &&\\
  \hline &&\\
  \hbox{{\bf omega limit}} & \om(U;\Phi^t)\hspace{.45in} & \om(U;\Phi)\hspace{.45in} \\ \hbox{{\bf set}} & =\bigcap \konf(U;\Phi^t) & = \bigcap \konf(U;\Phi) \\ &&
   \end{array}\label{table:omLimSets} \]
  \vspace{.3in}


Once again, the strict omega limit sets are not necessarily invariant.

\begin{ex}\label{ex:strOmNotInvt}
Consider the space $X$ is shown in Figure \ref{fig:strOmNotInvt}. We act upon it by a multiflow $\Phi$. The points in $U$ (the straight line pictured in Figure \ref{fig:strOmNotInvt}) move asymptotically toward the point $x$. The action on the point $x$ is such that one can remain at $x$ in future time, or one can move off the point $x$ onto the rest of the circle, which has period $T$. We consider $\hat{\om}(U;\Phi) = \{x\}$, but $\Phi^{[0,\infty)}(x)$ is the whole circle. That is, $\{x\}$ is not an invariant set (it is not confining). If we instead calculate $\om(U;\Phi) = \bigcap \konf(U;\Phi)$, then then we take the intersection of closed confining sets, which contain forward images of $U$. All such sets contain $x$, but they also hold the whole circle, because they must be confining. For any point $y\in U$, there is at least one $K\in\konf(U;\Phi)$, which does not contain $y$. So, $\om(U;\Phi)$ is the whole circle, which is invariant.

				\begin{figure}\centering
					\includegraphics[width=.5\linewidth]{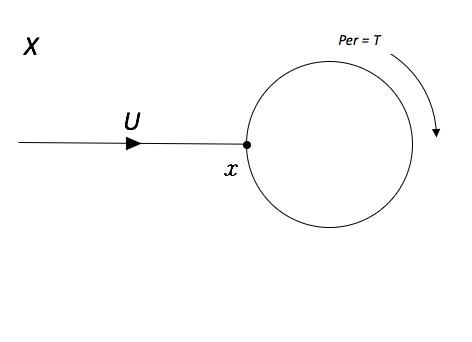}
					\caption{$\hat{\om}(U;\Phi)$ from Example \ref{ex:strOmNotInvt} is not invariant.}\label{fig:strOmNotInvt}
				\end{figure}

\end{ex}

Just as maps are useful in determining the end behavior of flows and semiflows, we will use the omega limit sets for relations to shed light on those defined directly through multiflows.

\begin{thm}\label{thm:OmHRiOmR}
If $\Phi$ is a multiflow over a compact Hausdorff space $X$ and $U\subset X$, then for any $t>0$,
	\[\hat{\om}(U;\Phi^t)\subset\om(U;\Phi^t).\]
\end{thm}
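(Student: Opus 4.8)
The plan is to recognize this statement as nothing more than the relation-level inclusion (Theorem \ref{thm:InclOmLimReln}) applied to a single fixed-time relation. Since $t>0$ is fixed throughout, I would set $f:=\Phi^t$ and treat it as an ordinary closed relation on $X$. Then both constructions appearing in the statement are exactly the relation-theoretic strict and ordinary omega limit sets of $U$ under $f$, and the desired containment follows immediately from the earlier theorem.

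First I would verify that $f=\Phi^t$ is genuinely a closed relation on the compact Hausdorff space $X$, so that Theorem \ref{thm:InclOmLimReln} is applicable. This is built into the definition of a multiflow: under the set-valued description $\Phi:[0,\infty)\to\clrel(X)$, we have $\Phi(t)=\Phi^t\in\clrel(X)$ by definition, i.e.\ $\Phi^t$ is a closed relation. (Alternatively, viewing $\Phi\cl{\subset}[0,\infty)\times X\times X$ as closed, the slice $\Phi\cap(\{t\}\times X\times X)$ is closed, and projecting away the constant first coordinate shows $\Phi^t$ is closed in $X\times X$.)

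Next I would check that the notation lines up with the relation definitions. Because $(\Phi^t)^n=\Phi^{nt}$, the strict omega limit set of the relation $f$ is
\[ \hat{\om}(U;f)=\bigcap_{m\ge 0}\ol{\bigcup_{n\ge m} f^n(U)}=\bigcap_{m\ge 0}\ol{\bigcup_{n\ge m}\Phi^{nt}(U)}=\hat{\om}(U;\Phi^t), \]
and likewise $\om(U;f)=\bigcap\konf(U;f)=\bigcap\konf(U;\Phi^t)=\om(U;\Phi^t)$, where the relevant $\konf$-set is the fixed-time-relation version (sets closed and confining with respect to $\Phi^t$), not the multiflow-level version. With these identifications in place, Theorem \ref{thm:InclOmLimReln} gives $\hat{\om}(U;f)\subset\om(U;f)$, which is precisely the claim.

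There is no substantial obstacle here: the content lives entirely in the already-proved relation theorem, and the only step requiring care is the bookkeeping---making sure we invoke the fixed-time relation $\Phi^t$ together with its associated $\konf(U;\Phi^t)$, rather than the multiflow-level objects. The analogous inclusion for the genuinely continuous-time sets $\hat{\om}(U;\Phi)$ and $\om(U;\Phi)$ is a separate, and harder, statement, so keeping the two families of definitions distinct is the one thing worth flagging in the write-up.
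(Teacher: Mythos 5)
Your proposal is correct and is exactly the paper's proof: the paper's entire argument is ``Apply Lemma \ref{thm:InclOmLimReln} with $f=\Phi^t$,'' and your additional verifications (that $\Phi^t$ is a closed relation by the definition of a multiflow, and that the fixed-time-relation $\konf(U;\Phi^t)$ rather than the multiflow-level $\konf(U;\Phi)$ is the object being intersected) are the right bookkeeping that the paper leaves implicit.
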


\begin{proof}
Apply Lemma \ref{thm:InclOmLimReln} with $f=\Phi^t$.
\end{proof}

\begin{thm}\label{thm:ECOmRiOmHR}
If $\Phi$ is a multiflow over a compact Hausdorff space $X$ and $U\subset X$ is {\em eventually confining} with respect to $\Phi$, then for any $t>0$,
	\[\hat{\om}(U;\Phi^t)=\om(U;\Phi^t).\]
\end{thm}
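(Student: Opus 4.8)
The plan is to recognize this as the fixed-time specialization of the relation-level result Theorem \ref{thm:EvConfRel}, and simply reduce to it. Fix an arbitrary $t>0$ and set $f=\Phi^t$. Since $\Phi$ is a multiflow, each fixed-time slice $\Phi^t$ is by definition a closed relation on $X$, and $X$ is compact Hausdorff, so the hypotheses on the ambient space and on $f$ in Theorem \ref{thm:EvConfRel} are met. The only remaining hypothesis of that theorem is that the set be eventually confining with respect to the relation $f$, so the crux of the argument is verifying that $U$ is eventually confining for $\Phi^t$ in the relation sense.

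That verification is exactly Theorem \ref{ConfDispConfRel}(3): since $U$ is assumed eventually confining for the multiflow $\Phi$, it is eventually confining for each fixed time relation $\Phi^t$. Concretely, there is a $\tau>0$ with $\Phi^s(U)\subset U$ for all $s\ge\tau$; choosing $N$ with $Nt\ge\tau$ gives $(\Phi^t)^n(U)=\Phi^{nt}(U)\subset U$ for all $n\ge N$, which is the relation notion of eventually confining.

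Next I would observe that the two objects in the statement are literally the relation-level omega limit sets for $f=\Phi^t$. Reading off the table of omega limit sets and using $(\Phi^t)^n=\Phi^{nt}$, we have
\[
\hat{\om}(U;\Phi^t)=\bigcap_{m\ge0}\ol{\bigcup_{n\ge m}\Phi^{nt}(U)}=\hat{\om}(U;f),
\qquad
\om(U;\Phi^t)=\bigcap\konf(U;\Phi^t)=\om(U;f),
\]
so there is no discrepancy of definitions to reconcile. With $f=\Phi^t$ a closed relation on a compact Hausdorff space and $U$ eventually confining for $f$, Theorem \ref{thm:EvConfRel} yields $\hat{\om}(U;f)=\om(U;f)$, i.e. $\hat{\om}(U;\Phi^t)=\om(U;\Phi^t)$. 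Since $t>0$ was arbitrary, this holds for every $t>0$.

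I do not expect a genuine obstacle here; the theorem is designed precisely so that the continuous-time hypothesis collapses onto the already-established discrete result. The one point requiring care is the direction of the implication in the hypothesis translation: eventual confinement must be inherited from $\Phi$ down to $\Phi^t$ (not the reverse), and this is guaranteed by Theorem \ref{ConfDispConfRel}(3). If anything, the subtlety worth flagging in the write-up is that the index $N$ depends on $t$, so the same $U$ serves as eventually confining for every $\Phi^t$ but with a $t$-dependent threshold; this causes no trouble since $t$ is held fixed throughout each application.
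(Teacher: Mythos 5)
Your proposal is exactly the paper's proof, just written out in full: the paper's entire argument is ``Apply Lemma \ref{ConfDispConfRel} and Theorem \ref{thm:EvConfRel} with $f=\Phi^t$,'' and your verification that eventual confinement passes from $\Phi$ down to $\Phi^t$ (with a $t$-dependent threshold $N$) plus the identification $(\Phi^t)^n=\Phi^{nt}$ is precisely what that citation compresses. One small caveat you share with the paper itself: Theorem \ref{thm:EvConfRel} is stated for $U\overset{cl}{\subset}X$ \emph{closed} (its proof needs closedness of the iterates $f^k(U)$), whereas the statement here assumes only $U\subset X$, so strictly speaking both your argument and the paper's require adding that hypothesis or a separate justification.
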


\begin{proof}
Apply Lemma \ref{ConfDispConfRel} and Theorem \ref{thm:EvConfRel} with $f=\Phi^t$.
\end{proof}

\begin{thm}\label{thm:OmHats} Let $\Phi$ be a multiflow over a space $X$. Let $U\subset X$. Fix any $t>0$, and
\[\hat{\om}(U;\Phi^t)\subset \hat{\om}(U;\Phi).\]
If, in addition, $U$ is eventually confining (in the multiflow sense), then we gain equality:
\[\hat{\om}(U;\Phi^t) = \hat{\om}(U;\Phi).\]
\end{thm}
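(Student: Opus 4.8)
First I would write out the two sets explicitly so the comparison is transparent. The fixed-time strict omega limit set is
\[
\hat{\om}(U;\Phi^t)=\bigcap_{m\ge0}\ol{\bigcup_{n\ge m}\Phi^{nt}(U)},
\]
which only samples the discrete times $\{0,t,2t,\ldots\}$, whereas the continuous-time strict omega limit set is
\[
\hat{\om}(U;\Phi)=\bigcap_{s\ge0}\ol{\Phi^{[s,\infty)}(U)},
\]
which sees all of $[0,\infty)$. The plan for the inclusion $\hat{\om}(U;\Phi^t)\subset\hat{\om}(U;\Phi)$ is to observe that sampling fewer times gives a larger (or equal) tail union, hence a larger set after intersecting. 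Concretely, for each fixed $m$ the discrete tail $\bigcup_{n\ge m}\Phi^{nt}(U)$ is contained in the continuous tail $\Phi^{[mt,\infty)}(U)$, so taking closures and then intersecting over the cofinal family $\{mt\}_{m\ge0}\subset[0,\infty)$ yields the claim. I would phrase this using Lemma \ref{lem:intersections}(2): the family of continuous tails indexed by $s\ge0$ refines the family of discrete tails, and since $\{mt:m\ge0\}$ is cofinal in $[0,\infty)$ the intersection over it equals $\hat{\om}(U;\Phi)$.

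**The equality under eventual confinement.**

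For the reverse inclusion under the hypothesis that $U$ is eventually confining, I would fix $T>0$ with $\Phi^\sigma(U)\subset U$ for all $\sigma\ge T$ and aim to show every continuous tail closure $\ol{\Phi^{[s,\infty)}(U)}$ contains $\hat{\om}(U;\Phi^t)$, or equivalently that the discrete sampling loses nothing in the limit. The key mechanism is that eventual confinement lets me absorb the ``in-between'' times back into sampled times: for any real $\tau\ge T$, writing $\tau=nt+r$ with $0\le r<t$, I want to compare $\Phi^\tau(U)$ with $\Phi^{nt}(U)$. The useful direction is $\Phi^{nt}(U)=\Phi^{nt-\tau}\big(\Phi^\tau(U)\big)$ is not directly available since $nt-\tau\le0$; instead I would go forward: once $U$ is confining past time $T$, each $\Phi^\tau(U)\subset U$, and more usefully $\Phi^{(n+1)t}(U)=\Phi^{(n+1)t-\tau}(\Phi^\tau(U))$ with the exponent nonnegative, so each continuous-time image at time $\tau$ maps forward into a sampled image within one step $t$. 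This should force $\ol{\Phi^{[s,\infty)}(U)}\subset\ol{\bigcup_{n\ge m}\Phi^{nt}(U)}$ for suitably chosen $m$, giving the reverse inclusion.

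**Main obstacle and the route around it.**

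The hard part will be controlling the continuous-time images $\Phi^\tau(U)$ for $\tau$ strictly between consecutive sample points by a single sampled image, because a multiflow image can genuinely grow as $t$ increases (sliding regions mean $\Phi^{s+t}\supset\Phi^s$ fails in general). The leverage I expect to need is precisely that confinement controls growth from above: since $K$-type confining sets trap forward images, and since by Theorem \ref{thm:EvConfRel} the fixed-time strict and non-strict omega sets already agree for the relation $\Phi^t$, I can reduce the continuous-time object to the $\konf$-set description. The cleanest route may therefore be to prove $\hat{\om}(U;\Phi)=\om(U;\Phi)$ indirectly: combine the already-established equality $\hat{\om}(U;\Phi^t)=\om(U;\Phi^t)$ (Theorem \ref{thm:ECOmRiOmHR}) with the first inclusion and a sandwiching argument showing $\hat{\om}(U;\Phi)\subset\hat{\om}(U;\Phi^t)$ via eventual confinement. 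I anticipate the eventual-confinement hypothesis is exactly what guarantees each continuous tail is eventually contained, up to closure, in a discrete tail, so that the two nested intersections collapse to the same limit; verifying that containment rigorously, accounting for the non-monotonicity of images, is where the real work lies.
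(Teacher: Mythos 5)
Your first inclusion is correct and is essentially the paper's own argument: each discrete tail $\bigcup_{n\ge m}\Phi^{nt}(U)$ sits inside the continuous tail $\Phi^{[mt,\infty)}(U)$, the continuous tails are nested decreasing in the starting time, and $\{mt\}_{m\ge0}$ is cofinal in $[0,\infty)$, so Lemma \ref{lem:intersections} gives $\hat{\om}(U;\Phi^t)\subset\hat{\om}(U;\Phi)$.

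The gap is in the reverse inclusion, and you concede it yourself (``verifying that containment rigorously \dots is where the real work lies''). The mechanism you sketch uses the wrong decomposition: writing $\Phi^{(n+1)t}(U)=\Phi^{(n+1)t-\tau}\bigl(\Phi^\tau(U)\bigr)$ expresses the \emph{next sampled image} as a forward image of the continuous-time image, and for a multiflow this yields no containment between $\Phi^\tau(U)$ and $\Phi^{(n+1)t}(U)$ in either direction, since images under relations are not monotone along orbits. Your fallback ``indirect route'' is circular: it invokes a sandwiching argument showing $\hat{\om}(U;\Phi)\subset\hat{\om}(U;\Phi^t)$, which is precisely the statement to be proved. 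The missing idea is to put the sampled time on the \emph{outside} of the composition and let eventual confinement absorb the leftover on the \emph{inside}: if $t_0>0$ satisfies $\Phi^\sigma(U)\subset U$ for all $\sigma\ge t_0$, then for any $m$ and any $\tau\ge mt+t_0$,
\[
\Phi^\tau(U)=\Phi^{mt}\bigl(\Phi^{\tau-mt}(U)\bigr)\subset\Phi^{mt}(U),
\]
since $\tau-mt\ge t_0$. Hence the entire continuous tail $\Phi^{[mt+t_0,\infty)}(U)$ collapses into the single sampled image $\Phi^{mt}(U)$, which lies in the discrete tail at index $m$; taking closures and applying Lemma \ref{lem:intersections} to the two families of tails gives $\hat{\om}(U;\Phi)\subset\hat{\om}(U;\Phi^t)$, and thus equality. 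This one-line containment is exactly how the paper completes the proof.
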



\begin{proof}
Let $\Phi$ be a multiflow over $X$ and $U\subset X$. Fix some $\tau>0$ and consider $\hat{\om}(U;\Phi^\tau)$ and $\hat{\om}(U;\Phi)$. Let 
	\[\mathcal{S}=\left\{ \ol{\bigcup\limits_{n\ge m} \Phi^{n\tau}(U)}\right\}_{m\ge0},\]
	\[\mathcal{T}=\left\{ \ol{\bigcup\limits_{s\ge t} \Phi^{s}(U)}\right\}_{t\ge 0},\]
Consider any $S=\bigcup\limits_{n\ge m} \Phi^{n\tau}(U)=\Phi^{m\tau}(U)\cup\Phi^{(m+1)\tau}(U)\cup\ldots,$ so $\ol{S}\in\mathcal{S}$ There is a $T$ with $\ol{T}\in\mathcal{T}$ such that
	\[ T=\bigcup\limits_{s\ge m\tau}\Phi^s(U) \supset S,\]
and therefore $\ol{S}\subset\ol{T}$. By Lemma \ref{lem:intersections},
	\[ \hat{\om}(U;\Phi^\tau)=\bigcap\limits_{\ol{S}\in\mathcal{S}} \ol{S} \subset \bigcap\limits_{\ol{T}\in\mathcal{T}} \ol{T} = \hat{\om}(U;\Phi).\]
	
To prove the other inclusion, we add the assumption that $U$ is eventually confining. Let $t_0>0$ be such that $\Phi^t(U)\subset U$ for all $t\ge t_0$. Again, fix some $\tau>0$ and some $m>0$. Set $s\ge m\tau+t_0$. Then, $s=m\tau+t_0+\alpha$, and so
	\[\Phi^s(U)=\Phi^{m\tau+(t_0+\alpha)}(U)\subset \Phi^{m\tau}(U),\]
Thus, $\bigcup\limits_{s\ge m\tau+t_0}\Phi^s(U)\subset\Phi^{m\tau}(U)\subset\bigcup\limits_{k\ge m}\Phi^{k\tau}(U).$ The same inclusion holds for their closures. So, for each $\ol{S}\in\mathcal{S}$, there is some $\ol{T}\in\mathcal{T}$ with $T\subset S$. By Lemma \ref{lem:intersections},
	\[ \hat{\om}(U;\Phi^\tau)=\bigcap\limits_{\ol{S}\in\mathcal{S}} \ol{S} \supset \bigcap\limits_{\ol{T}\in\mathcal{T}} \ol{T} = \hat{\om}(U;\Phi),\]
giving us equality.
\end{proof}

These sets are not necessarily equal if $U$ is not confining with respect to $\Phi$, as will be shown in Example \ref{ex:NotEvConfOmNotEqual}.

\begin{thm}\label{thm:OmRiOmD} Let $\Phi$ be a multiflow over a space $X$. Let $U\subset X$. Then  \[\om(U;\Phi^t)\subset \om(U;\Phi)\] for any $t>0$.
\end{thm}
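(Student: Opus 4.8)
The plan is to reduce the claimed inclusion of intersections to a containment of the defining families and then invoke Lemma \ref{lem:intersections}. Recall that $\om(U;\Phi^t)=\bigcap\konf(U;\Phi^t)$ and $\om(U;\Phi)=\bigcap\konf(U;\Phi)$. By Lemma \ref{lem:intersections}(1), if I can show that $\konf(U;\Phi)\subset\konf(U;\Phi^t)$, then $\bigcap\konf(U;\Phi^t)\subset\bigcap\konf(U;\Phi)$, which is exactly $\om(U;\Phi^t)\subset\om(U;\Phi)$. So the entire argument reduces to checking that every $\Phi$-$\konf$-set is also a $\Phi^t$-$\konf$-set.

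To that end, fix $t>0$ and take an arbitrary $K\in\konf(U;\Phi)$. By definition $K$ is closed, $K$ is confining with respect to $\Phi$ (that is, $\Phi^\tau(K)\subset K$ for all $\tau>0$), and there is some $s>0$ with $\Phi^s(U)\subset K$. I must verify the three requirements for membership in $\konf(U;\Phi^t)$: closedness, confinement with respect to the fixed-time relation $\Phi^t$, and the existence of some $n>0$ with $\Phi^{nt}(U)\subset K$. Closedness is immediate. Confinement with respect to $\Phi^t$ is also immediate, since $\Phi$-confinement gives $\Phi^t(K)\subset K$ in particular, and $\Phi^t$ is precisely the relation whose iterates define $\konf(U;\Phi^t)$.

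The one step requiring a short argument is upgrading the absorbing time $s$ to an integer multiple of $t$. I would choose $n\in\ZZ_{>0}$ large enough that $nt\ge s$, and write $nt=s+\alpha$ with $\alpha\ge0$. Then, using the semigroup property together with Lemma \ref{lem:relninclusion}(1),
\[
\Phi^{nt}(U)=\Phi^{\alpha}\big(\Phi^{s}(U)\big)\subset\Phi^{\alpha}(K)\subset K,
\]
where the last inclusion is confinement of $K$ under $\Phi$ (and is a trivial equality when $\alpha=0$). This exhibits the required $n$, so $K\in\konf(U;\Phi^t)$, the containment $\konf(U;\Phi)\subset\konf(U;\Phi^t)$ holds, and the theorem follows.

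I do not anticipate a genuine obstacle here; the content is essentially bookkeeping. The one point worth flagging is conceptual rather than technical: the argument crucially uses that $K$ absorbs forward images at \emph{all} positive times, not merely at multiples of $t$, and this is exactly what lets me slide the absorbing time $s$ up to a multiple of $t$ in the last display. I do \emph{not} expect the reverse containment $\konf(U;\Phi^t)\subset\konf(U;\Phi)$ to hold, since a set confining only for the single relation $\Phi^t$ need not be confining for the whole multiflow; this is consistent with the theorem asserting only one inclusion.
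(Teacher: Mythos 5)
Your proposal is correct and follows essentially the same route as the paper's proof: both reduce the statement to the containment $\konf(U;\Phi)\subset\konf(U;\Phi^t)$ and verify it by noting that closedness and $\Phi^t$-confinement are inherited immediately, then upgrading the absorbing time $s$ to an integer multiple $nt\ge s$ via the semigroup property and confinement of $K$. The only cosmetic difference is that the paper chooses $n$ strictly greater than $s/t$ so that $\alpha>0$, while you allow $\alpha=0$ and handle it as a trivial equality; both are fine.
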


\begin{proof}
Let $\Phi$ and $U\subset X$ be as described, and let $t>0$ be fixed. Let
\begin{align*}\konf=&\{K\subset X: K\hbox{ is closed and confining for }\Phi,\\
		 &\hbox{ and there is some }\tau>0\hbox{ for which }\Phi^\tau(U)\subset K\},\\
		\konf_t=&\{K_t\subset X: K_t\hbox{ is closed and confining for }\Phi^t,\\ 
		&\hbox{ and there is some }n\in\ZZ_{>0}\hbox{ for which }\Phi^{nt}(U)\subset K_t\}.
\end{align*}
Consider some $K\in\konf$. We now show $K\in\konf_t$. Because $K$ is confining for $\Phi$ (in the multiflow sense), $\Phi^t(K)\subset K$, making $K$ confining for $\Phi^t$. Let $\tau>0$ be a number such that $\Phi^\tau(U)\subset K$. Then, find some integer $n>\frac{\tau}{t}$. Let $\alpha=nt-\tau>0$, and 
\begin{align*}
				\Phi^{nt}(U)=&\Phi^\alpha(\Phi^{\tau}(U))\\
				\subset& \Phi^\alpha(K)\\
				\subset&K.
\end{align*}
Thus $K\in K_t$, and 
	\[\konf\subset\konf_t \implies  \om(U;\Phi^t)=\bigcap\limits_{K_t\in \konf_t}K_t\subset\bigcap\limits_{K\in\konf}K=\om(U;\Phi)\]
	for any $t>0$.
\end{proof}

\begin{thm}
If $\Phi$ is a multiflow over a compact space $X$ and $U\subset X$, then $\hat{\om}(U;\Phi)\subset\om(U;\Phi)$.
\end{thm}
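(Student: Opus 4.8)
The plan is to show that $\hat{\om}(U;\Phi)$ is contained in every member of $\konf(U;\Phi)$, whence it lies in their intersection $\om(U;\Phi)$. This mirrors the strategy behind the analogous inclusion for relations (Theorem \ref{thm:InclOmLimReln}), but I would carry it out directly in continuous time rather than passing through a fixed-time relation. The point is that the ``strict'' side is built from closures of forward images, while the ``$\om$'' side is built from closed confining sets, so the natural bridge is to trap a tail of forward images inside an arbitrary confining set.

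First I would fix an arbitrary $K\in\konf(U;\Phi)$. By definition $K$ is closed, confining for $\Phi$ (so $\Phi^t(K)\subset K$ for all $t>0$), and there is some $\tau>0$ with $\Phi^\tau(U)\subset K$. The key step is to propagate this single containment forward for all later times: for $s\ge\tau$ write $s=\tau+\alpha$ with $\alpha\ge0$ and apply the semigroup property, $\Phi^s(U)=\Phi^\alpha(\Phi^\tau(U))\subset\Phi^\alpha(K)\subset K$, where the last inclusion is confinement (and holds trivially when $\alpha=0$, since $\Phi^0=\id$). Hence $\Phi^{[\tau,\infty)}(U)\subset K$, and because $K$ is closed, $\overline{\Phi^{[\tau,\infty)}(U)}\subset K$.

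Now, since $\hat{\om}(U;\Phi)=\bigcap_{t\ge0}\overline{\Phi^{[t,\infty)}(U)}$ is an intersection over all $t\ge0$, it is contained in the single term indexed by $t=\tau$, giving $\hat{\om}(U;\Phi)\subset\overline{\Phi^{[\tau,\infty)}(U)}\subset K$. As $K$ was an arbitrary element of $\konf(U;\Phi)$, intersecting over all such $K$ yields $\hat{\om}(U;\Phi)\subset\bigcap\konf(U;\Phi)=\om(U;\Phi)$. Compactness of $X$ enters only to guarantee that $\konf(U;\Phi)$ is nonempty (indeed $X\in\konf(U;\Phi)$ by Theorem \ref{thm:KonfNonEmpty}), so that the intersection defining $\om(U;\Phi)$ is meaningful.

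I do not expect a serious obstacle here. The monotone tail structure of $\Phi^{[\tau,\infty)}(U)$ makes the forward-image containment immediate once confinement is invoked, and the only points needing care are the boundary case $\alpha=0$ and the observation that closedness of $K$ permits passage to the closure. The genuinely delicate comparisons—the reverse inclusions and the equalities that hold only under eventual confinement—are where the real work lies (as in Theorems \ref{thm:OmHats} and \ref{thm:OmRiOmD}); this particular inclusion is the easy direction and should follow in a few lines.
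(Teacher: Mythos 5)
Your proof is correct and follows essentially the same route as the paper's: fix an arbitrary $K\in\konf(U;\Phi)$, propagate $\Phi^\tau(U)\subset K$ forward via the semigroup property and confinement, pass to the closure using closedness of $K$, and then note that $\hat{\om}(U;\Phi)$ sits inside that single tail-closure term, with compactness invoked only (via Theorem \ref{thm:KonfNonEmpty}) to ensure $\konf(U;\Phi)\neq\es$. No substantive differences to report.
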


\begin{proof}
Let $X,U$, and $\Phi$ be as described. Recall that by Theorem \ref{thm:KonfNonEmpty}, $\konf(U;\Phi)\neq\es.$ Consider any $K\in\konf(U;\Phi)$. Then, $K$ is closed and confining, and there is some $T\ge0$ for which $\Phi^t(U)\subset K$ or all $t\ge T$. Thus,
	\[ \Phi^t(U)=\Phi^{t-T}(\Phi^T(U))\subset \Phi^{t-T}(K)\subset K \]
for all $t\ge T$, which implies $\bigcup\limits_{t\ge T} \Phi^t(U)\subset K$, but $K$ is closed, so $\ol{\bigcup\limits_{t\ge T} \Phi^t(U)}\subset K$. Finally, we see that
	\[\hat{\om}(U;\Phi)=\bigcap\limits_{s\ge0}\ol{\bigcup\limits_{t\ge s}\Phi^t(U)}\subset \ol{\bigcap\limits_{t\ge T}\Phi^t(U)}\subset K.\]
This is true for all $K\in \konf(U;\Phi)$, so $\hat{\om}(U;\Phi)\subset \bigcap \konf(U;\Phi)=\om(U;\Phi).$
\end{proof}


\begin{thm}\label{thm:ECOmDiOmHD}
If $\Phi$ is a multiflow over a compact space $X$ with $B\overset{cl}{\subset}X$ eventually confining with respect to $\Phi$, then $\om(B;\Phi)\subset\hat{\om}(B;\Phi).$
\end{thm}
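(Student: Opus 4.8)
The plan is to prove the inclusion by comparing the two intersections that define $\om(B;\Phi)=\bigcap\konf(B;\Phi)$ and $\hat{\om}(B;\Phi)=\bigcap_{t\ge0}\ol{\Phi^{[t,\infty)}(B)}$ through Lemma \ref{lem:intersections}(2). That lemma gives $\bigcap\mathfrak{S}\subset\bigcap\mathfrak{T}$ as soon as every member of $\mathfrak{T}$ contains some member of $\mathfrak{S}$. Taking $\mathfrak{S}=\konf(B;\Phi)$ and $\mathfrak{T}=\{\ol{\Phi^{[t,\infty)}(B)}:t\ge0\}$, the theorem reduces to a single claim: for each $t\ge0$ there is a set $K\in\konf(B;\Phi)$ with $K\subset\ol{\Phi^{[t,\infty)}(B)}$.

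To produce such a $K$ I would lean on the explicit $\konf$-elements already built from the eventual-confining hypothesis. Let $T>0$ satisfy $\Phi^s(B)\subset B$ for all $s\ge T$. By Theorem \ref{thm:KonfNonEmpty} the set $K_0=\bigcap_{s\in[0,T]}\Phi^s(B)$ lies in $\konf(B;\Phi)$, and since the $s=0$ term of this intersection is $\Phi^0(B)=B$, we have $K_0\subset B$. By Corollary \ref{thm:ImgKonfInKonf} every forward image $\Phi^r(K_0)$ with $r>0$ is again a member of $\konf(B;\Phi)$, and these images are the candidates. Given $t\ge0$, choose any $r\ge t$ with $r>0$. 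Monotonicity of images (Lemma \ref{lem:relninclusion}(1)) applied to $K_0\subset B$ gives $\Phi^r(K_0)\subset\Phi^r(B)$, and because $r\ge t$ we have $\Phi^r(B)\subset\bigcup_{s\ge t}\Phi^s(B)\subset\ol{\Phi^{[t,\infty)}(B)}$. Thus $K=\Phi^r(K_0)$ is a $\konf$-element sitting inside $\ol{\Phi^{[t,\infty)}(B)}$, and Lemma \ref{lem:intersections}(2) then yields $\om(B;\Phi)\subset\hat{\om}(B;\Phi)$. The eventual-confining assumption enters exactly through Theorem \ref{thm:KonfNonEmpty} and Corollary \ref{thm:ImgKonfInKonf}, which is where it is genuinely needed.

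The hard part will be deciding how to handle the closure in $\ol{\Phi^{[t,\infty)}(B)}$. The tempting route---mimicking the relation argument of Theorem \ref{thm:EvConfRel} by showing each $\ol{\Phi^{[t,\infty)}(B)}$ is itself a closed confining set and hence lies in $\konf$---runs into two continuous-time difficulties: the union $\bigcup_{s\ge t}\Phi^s(B)$ is not finite, so it need not be closed and cannot be truncated to finitely many iterates the way $\ol{\bigcup_{k\ge n}f^k(U)}=f^n(V_0)$ was in the discrete case, and Example \ref{ex:ClosureofConfNotConf} warns that the closure of a confining set need not be confining. The construction above sidesteps both issues: rather than proving that the sets defining $\hat{\om}$ are confining, it only asks that each of them \emph{contain} a known $\konf$-element, so the single inclusion $\Phi^r(K_0)\subset\Phi^r(B)$ is all that is required and no confining property is demanded of $\ol{\Phi^{[t,\infty)}(B)}$ itself.
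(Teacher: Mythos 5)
Your proposal is correct and follows essentially the same route as the paper's own proof: both construct $K_B=\bigcap_{s\in[0,T]}\Phi^s(B)$ via Theorem \ref{thm:KonfNonEmpty}, use Corollary \ref{thm:ImgKonfInKonf} to place the forward images $\Phi^r(K_B)$ in $\konf(B;\Phi)$, and exploit $K_B\subset B$ to fit $\Phi^r(K_B)$ inside each $\ol{\Phi^{[t,\infty)}(B)}$ before comparing intersections. The only cosmetic difference is that you invoke Lemma \ref{lem:intersections}(2) explicitly, where the paper chains the two inclusions through the intermediate family $\{\Phi^t(K_B)\}_{t\ge0}$.
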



\begin{proof}

Let $K_B=\bigcap\limits_{s\in[0,T]}\Phi^s(B)$ where $T$ is a number such that $\Phi^t(B)\subset B$ for all $t>T$. Then $\{\Phi^t(K_B)\}_{t\in[0,\infty)}\subset \konf(B;\Phi)$, as shown in Corallary \ref{thm:ImgKonfInKonf}. Note also that $K_B\subset B$. Thus,
	\[	\Phi^t(K_B)\subset \Phi^t(B)\subset \ol{\bigcup\limits_{s\ge t}\Phi^s(B)},\]
and so
	\[ 	\hat{\om}(B;\Phi)=\bigcap\limits_{t\ge0}\ol{\bigcup\limits_{s\ge t}\Phi^s(B)}\supseteq \bigcap\limits_{t\ge0}\Phi^t(K_B)\supseteq \bigcap\konf(B;\Phi)= \om(B;\Phi). \]

\end{proof}

To summarize the relationships, refer to the following corollaries.
\begin{cor}
 Let $\Phi$ be a multiflow over a compact Hausdorff space $X$ with $U\subset X$. Let $t>0$ be a fixed time. Then
\[ \begin{array}{ccc}  \hat{\om}(U;\Phi^t) & \subset                 &\hat{\om}(U;\Phi) \\ 
                                \rotatebox[origin=c]{270}{$\subset$} &   & \rotatebox[origin=c]{270}{$\subset$}        \\
                                \om(U;\Phi^t)          &   \subset                &\om(U;\Phi) .    \end{array} \]
\end{cor}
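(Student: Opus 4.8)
The plan is to recognize that this corollary is purely organizational: it is nothing more than an assembly of four set inclusions, each of which has already been established as a separate theorem above, so no new argument is required. First I would read off the four arrows in the displayed diagram and match each to its source. The top horizontal arrow $\hat{\om}(U;\Phi^t)\subset\hat{\om}(U;\Phi)$ is exactly the first (unconditional) inclusion of Theorem \ref{thm:OmHats}, which holds for every $t>0$ with no extra hypothesis on $U$. The left vertical arrow $\hat{\om}(U;\Phi^t)\subset\om(U;\Phi^t)$ is Theorem \ref{thm:OmHRiOmR}. The bottom horizontal arrow $\om(U;\Phi^t)\subset\om(U;\Phi)$ is Theorem \ref{thm:OmRiOmD}. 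Finally, the right vertical arrow $\hat{\om}(U;\Phi)\subset\om(U;\Phi)$ is the (unnamed) theorem stated immediately before Theorem \ref{thm:ECOmDiOmHD}.

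Next I would verify that the standing hypotheses of the corollary are strong enough to invoke each cited result. Here $X$ is assumed compact Hausdorff and $U\subset X$ is arbitrary. Theorem \ref{thm:OmHats} and Theorem \ref{thm:OmRiOmD} require only that $X$ be a space; Theorem \ref{thm:OmHRiOmR} requires compact Hausdorff; and the right-column inclusion requires compactness. Since compact Hausdorff implies all of these, every arrow is licensed. I would also note explicitly that \emph{none} of the four arrows relies on $U$ being eventually confining: that hypothesis is only what upgrades some of these inclusions to equalities in the earlier results, and the diagram deliberately records only the inclusions.

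The proof therefore reduces to four one-line citations arranged to match the diagram, together with the trivial observation that the square ``commutes'' automatically because all four maps are containments among the same four fixed sets. I do not expect any genuine obstacle. The only points worth double-checking are bookkeeping ones: that both vertical arrows point from the strict omega limit set \emph{down} to the omega limit set (consistent with the principle $\hat{\om}\subset\om$ holding at both the fixed-time and multiflow levels), and that the top arrow invokes the \emph{unconditional} half of Theorem \ref{thm:OmHats} rather than its conditional equality, so that the statement genuinely holds for arbitrary $U$.
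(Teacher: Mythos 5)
Your proposal is correct and takes essentially the same approach as the paper: the paper's proof is a one-line collection of Theorems \ref{thm:OmHRiOmR}, \ref{thm:OmHats}, and \ref{thm:OmRiOmD}. In fact you are slightly more careful than the paper, since you also explicitly cite the unlabeled theorem giving the right vertical inclusion $\hat{\om}(U;\Phi)\subset\om(U;\Phi)$ (which needs compactness), an arrow the paper's proof silently omits, and you correctly observe that only the unconditional half of Theorem \ref{thm:OmHats} is needed so that no eventual-confining hypothesis on $U$ is required.
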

\begin{proof}
                               This is achieved by collecting Theorems \ref{thm:OmHRiOmR}, \ref{thm:OmHats}, and \ref{thm:OmRiOmD}. \end{proof}

\begin{cor}\label{thm:EvConfOmegaEqual}
If $\Phi$ is a multiflow over a compact Hausdorff space $X$ and $U\overset{cl}{\subset}X$ is eventually confining with respect to $\Phi$, then
	\[      \hat{\om}(U;\Phi^t) = \hat{\om}(U;\Phi)= \om(U;\Phi) = \om(U;\Phi^t) \]
for any fixed time $t>0$.
\end{cor}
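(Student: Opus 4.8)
The plan is to assemble the four equalities from the inclusions already established, using the eventual-confinement hypothesis to upgrade each inclusion to an equality. Since $X$ is compact Hausdorff and $U$ is closed and eventually confining with respect to $\Phi$, every hypothesis required by the cited results is in force, so the argument is purely a matter of chaining them in the right order — exactly as the preceding corollary collects its three inclusions.

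First I would dispatch the two ``vertical'' equalities, one in the fixed-time column and one in the multiflow column. For the fixed-time column, Theorem \ref{thm:ECOmRiOmHR} gives $\hat{\om}(U;\Phi^t)=\om(U;\Phi^t)$ directly: by Theorem \ref{ConfDispConfRel} the eventual confinement of $U$ for $\Phi$ passes to each fixed-time relation $\Phi^t$, and $\Phi^t$ is a closed relation on the compact Hausdorff space $X$. For the multiflow column, I would combine the two opposing inclusions, namely $\hat{\om}(U;\Phi)\subset\om(U;\Phi)$ (the unlabeled theorem valid over any compact space) and $\om(U;\Phi)\subset\hat{\om}(U;\Phi)$ (Theorem \ref{thm:ECOmDiOmHD}, applied with $B=U$, which is precisely where the closedness of $U$ is consumed), to conclude $\hat{\om}(U;\Phi)=\om(U;\Phi)$.

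The remaining ``horizontal'' link is $\hat{\om}(U;\Phi^t)=\hat{\om}(U;\Phi)$, which is exactly the equality case of Theorem \ref{thm:OmHats} under eventual confinement. Stitching the three equalities together closes the square:
\[ \om(U;\Phi^t)=\hat{\om}(U;\Phi^t)=\hat{\om}(U;\Phi)=\om(U;\Phi), \]
so all four sets coincide for every fixed $t>0$.

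I do not expect a genuine analytic obstacle, since the substantive work was already spent in proving Theorems \ref{thm:ECOmRiOmHR}, \ref{thm:OmHats}, and \ref{thm:ECOmDiOmHD}. The one thing to watch is hypothesis hygiene: Theorem \ref{thm:ECOmDiOmHD} and the transfer of confinement through Theorem \ref{ConfDispConfRel} both need $U$ to be closed rather than an arbitrary subset; the inclusion $\hat{\om}(U;\Phi)\subset\om(U;\Phi)$ needs compactness of $X$; and the equality case of Theorem \ref{thm:OmHats} requires eventual confinement in the full multiflow sense (for all large $t$), not merely for a single $\Phi^t$. All of these are granted in the statement, so the corollary follows by citation, and I would present it as the same kind of one-line assembly used just above.
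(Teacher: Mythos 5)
Your proposal is correct and is essentially the paper's own proof: the corollary is obtained purely by assembling the previously established results (Theorems \ref{thm:ECOmRiOmHR}, \ref{thm:OmHats}, and \ref{thm:ECOmDiOmHD}) under the eventual-confinement hypothesis, exactly as you describe. The only cosmetic difference is the choice of which redundant inclusion closes the square --- you use the unlabeled theorem $\hat{\om}(U;\Phi)\subset\om(U;\Phi)$ to finish the multiflow column, whereas the paper's citation list instead includes Theorems \ref{thm:OmHRiOmR} and \ref{thm:OmRiOmD} and closes the loop $\om(U;\Phi)\subset\hat{\om}(U;\Phi)=\hat{\om}(U;\Phi^t)=\om(U;\Phi^t)\subset\om(U;\Phi)$; both assemblies are valid.
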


\begin{proof}
Let everything be as described in the corallary. Then by Theorems \ref{thm:OmHRiOmR}, \ref{thm:ECOmRiOmHR}, \ref{thm:OmHats}, \ref{thm:OmRiOmD}, and \ref{thm:ECOmDiOmHD},  

\[ \begin{array}{ccc}  \hat{\om}(U;\Phi^t) & =                 &\hat{\om}(U;\Phi) \\ 
                                \rotatebox[origin=c]{270}{$=$} &   & \rotatebox[origin=c]{90}{$=$}        \\
                                \om(U;\Phi^t)          &                   &\om(U;\Phi) .    \end{array} \]
\end{proof}

It is useful to be able to switch the definition of omega limit set, depending on the application. So long as $U$ is eventually confining, they will all agree.

\begin{thm}\label{OmFTRelEq}
If $\Phi$ is a multiflow over a compact Hausdorff space $X$ and $U\subset X$ is {\em eventually confining} with respect to $\Phi$, then for any $s,t>0$,
	\[\om(U;\Phi^s)=\om(U;\Phi^t).\]
\end{thm}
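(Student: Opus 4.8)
The plan is to deduce this directly from Corollary \ref{thm:EvConfOmegaEqual}, which already does almost all of the work. That corollary asserts that whenever $U$ is eventually confining with respect to $\Phi$, the four omega limit sets collapse to a single set; in particular $\om(U;\Phi^t) = \om(U;\Phi)$ for \emph{every} fixed time $t>0$. The crucial observation is that the right-hand side, $\om(U;\Phi)$, is defined purely in terms of the multiflow $\Phi$ and carries no dependence on the choice of fixed time. So, under the eventual-confinement hypothesis, the fixed-time omega limit set does not actually depend on $t$ at all, and the desired equality must follow.

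First I would fix arbitrary $s,t>0$ and note that $U$ being eventually confining with respect to $\Phi$ is exactly the hypothesis needed to invoke Corollary \ref{thm:EvConfOmegaEqual}. Applying it once with fixed time $s$ gives $\om(U;\Phi^s) = \om(U;\Phi)$, and applying it again with fixed time $t$ gives $\om(U;\Phi^t) = \om(U;\Phi)$. Chaining these two equalities through the common set $\om(U;\Phi)$ then yields $\om(U;\Phi^s) = \om(U;\Phi^t)$, which is precisely the claim. This is a two-line argument.

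There is essentially no obstacle here, because the substantive content has been moved upstream into the chain of Theorems \ref{thm:OmHRiOmR}, \ref{thm:ECOmRiOmHR}, \ref{thm:OmHats}, \ref{thm:OmRiOmD}, and \ref{thm:ECOmDiOmHD} that assemble Corollary \ref{thm:EvConfOmegaEqual}. If instead one wanted a self-contained argument that bypasses $\om(U;\Phi)$, the natural route would be to show directly that both $\om(U;\Phi^s)$ and $\om(U;\Phi^t)$ agree with their respective strict omega limit sets (via Theorem \ref{thm:ECOmRiOmHR}, noting that eventual confinement for $\Phi$ descends to each fixed-time relation by Theorem \ref{ConfDispConfRel}), and then to prove $\hat{\om}(U;\Phi^s) = \hat{\om}(U;\Phi^t)$ by comparing the two families of tail closures. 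The only place demanding care in that alternate approach is verifying that the tails $\ol{\bigcup_{n\ge m}\Phi^{ns}(U)}$ and $\ol{\bigcup_{n\ge m}\Phi^{nt}(U)}$ are mutually cofinal and applying Lemma \ref{lem:intersections}, which is exactly the cofinality bookkeeping already performed in the proof of Theorem \ref{thm:OmHats}. Given the corollary, however, the shortest and cleanest path is the double invocation described above.
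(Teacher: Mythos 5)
Your proposal is correct and is essentially identical to the paper's own proof: the paper likewise proves Theorem \ref{OmFTRelEq} by invoking Corollary \ref{thm:EvConfOmegaEqual} at the two fixed times and chaining the equalities through the common set $\om(U;\Phi)$. The alternate self-contained route you sketch is not needed, and the double-invocation argument is exactly what the paper does.
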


\begin{proof}
This is a result of Corollary \ref{thm:EvConfOmegaEqual}: Let $U\subset X$ and $\Phi$ be as described. Then, no matter what $s,t>0$ are,
	\[	\om(U;\Phi^t)=\om(U;\Phi)=\om(U;\Phi^s).\]
\end{proof}

As the following example demonstrates, this is not necessarily the case when $U$ is not eventually confining.

\begin{ex}\label{ex:NotEvConfOmNotEqual} Let $X=\RR^2$, and consider the flow (and therefore a multiflow) we've already considered a few times. The vector field, which defines it is
		\[ \left[\begin{array}{c} \dot{x} \\ \dot{y} \end{array}\right] = \left[\begin{array}{cc} 0 & 1 \\ -1 & 0 \end{array}\right]\left[\begin{array}{c} x \\ y \end{array}\right],\] 
which is simply the rotation matrix. So, the solution $\Phi$ is the flow, which rotates about the origin with period $2\pi$. Let $U$ be an ellipse, say $4x^2+y^2 \le 16.$ It is not eventually confining. Set $s=\pi$ and $t = \pi/3$. Then $\hat{\om}(U;\Phi^s)=\om(U; \Phi^s) = U$ is the ellipse in Figure \ref{fig:omUPi}, while $\hat{\om}(U;\Phi^t)=\om(U;\Phi^t)$ is the flower shape in Figure \ref{fig:omUPiThirds}, and $\hat{\om}(U;\Phi)=\om(U;\Phi)$ is the circular region in Figure \ref{fig:omU}.

In this case,
	\[   \om(U;\Phi^s)\subsetneq\om(U;\Phi^t)\subsetneq\om(U;\Phi).\]
	
\begin{figure}[h]
    \centering
    \subfigure[$\om(U;\Phi^{\pi}).$]
    {
        \includegraphics[width=.25\linewidth]{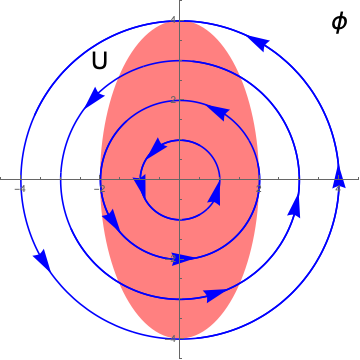}
        \label{fig:omUPi}
    }
    \subfigure[$\om(U;\Phi^{\pi/3}).$]
    {
        \includegraphics[width=.25\linewidth]{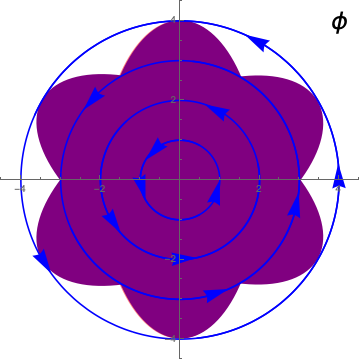}
        \label{fig:omUPiThirds}
    }
    \subfigure[$\om(U;\Phi).$]
    {
        \includegraphics[width=.25\linewidth]{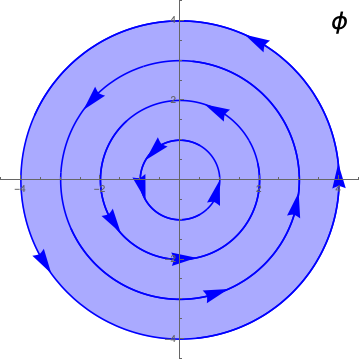}
        \label{fig:omU}
    }
    \label{fig:NotEvConfOmNotEqual}    
    \caption{Demonstrates the omega limit sets of $U$, as described in Example \ref{ex:NotEvConfOmNotEqual}.}
\end{figure}
	
\end{ex}

The omega limit set and the strict omega limit set for multiflows preserve inclusion. 

\begin{thm}\label{thm:DispOmPresIncn}
Let $\Phi$ be a multiflow over a space topological $X$. Let $U,V\subset X$. Then 
\begin{enumerate}	
	\item $U\subset V\implies \om(U;\Phi)\subset\om(V;\Phi)$, and
	\item	$U\subset V\implies \hat{\om}(U;\Phi)\subset\hat{\om}(V;\Phi).$
\end{enumerate}
\end{thm}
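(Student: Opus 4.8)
The plan is to treat each part as a monotonicity statement and reduce it to lemmas already established, while keeping careful track of the inclusion directions, which run opposite ways in the two cases.

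For part (2), I would work directly from the definition $\hat{\om}(U;\Phi)=\bigcap_{t\ge0}\ol{\Phi^{[t,\infty)}(U)}$. Since $U\subset V$, Lemma \ref{lem:relninclusion}(1) applied to each fixed-time relation $\Phi^s$ gives $\Phi^s(U)\subset\Phi^s(V)$ for every $s\ge0$. Taking the union over $s\ge t$ and then the closure---both of which preserve inclusion---yields $\ol{\Phi^{[t,\infty)}(U)}\subset\ol{\Phi^{[t,\infty)}(V)}$ for each fixed $t\ge0$. A point belonging to every set in the $U$-indexed family therefore belongs to every set in the $V$-indexed family, so $\hat{\om}(U;\Phi)\subset\hat{\om}(V;\Phi)$; equivalently, one may cite Lemma \ref{lem:intersections}(2) with the two families indexed identically by $t$. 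I expect no real obstacle here: it is a term-by-term comparison, and notably no compactness is invoked---only monotonicity of images and of closure---which matches the hypothesis that $X$ is merely topological.

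For part (1), the plan is to combine Lemma \ref{thm:kSetInclusion} with Lemma \ref{lem:intersections}(1), and the one step I would flag is the direction reversal. From $U\subset V$, Lemma \ref{thm:kSetInclusion} gives $\konf(V;\Phi)\subset\konf(U;\Phi)$---enlarging the set shrinks the collection of closed confining sets that eventually contain a forward image. Then Lemma \ref{lem:intersections}(1), which asserts that a larger family of sets has the smaller intersection, produces $\bigcap\konf(U;\Phi)\subset\bigcap\konf(V;\Phi)$, that is, $\om(U;\Phi)\subset\om(V;\Phi)$. The two reversals (first of the $\konf$-families, then of their intersections) compose to give the correct covariant direction, so the only hazard is miscounting a reversal; there is no deeper difficulty, and here too compactness is not required.
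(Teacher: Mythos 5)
Your proposal is correct and follows essentially the same route as the paper: part (1) via Lemma \ref{thm:kSetInclusion} reversing the $\konf$-families and then Lemma \ref{lem:intersections} reversing the intersections, and part (2) by term-by-term monotonicity of images, unions, and closures followed by intersection over the common index. Your explicit flagging of the double direction-reversal in (1) and the observation that no compactness is needed are both accurate and consistent with the paper's argument.
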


\begin{proof}
Assume $\Phi$ to be muliflow over a topological space $X$ with $U\subset V\subset X$. We prove the implications in order.
When we construct the omega limit set of $U$ or $V$, we take $\om(U;\Phi)=\bigcap\limits_{K\in\konf}K$ and $\om(V;\Phi)=\bigcap\limits_{L\in\lonf}L$, where
\begin{align*}
	\konf(U)=\{&K: K\hbox{ is closed and confining (in a multiflow sense), and there exists}\\						&\hbox{some }t_K\hbox{ for which }\Phi^{t_K}(U)\subset K\},\\
	\lonf(V)=\{&L: L\hbox{ is closed and confining (in a multiflow sense), and there exists}\\							&\hbox{some }t_L\hbox{ for which }\Phi^{t_L}(V)\subset L\}.
\end{align*}

Lemma \ref{thm:kSetInclusion} already guarantees that $\lonf\subset\konf$. By Lemma \ref{lem:intersections},
	\[\om(U;\Phi)=\bigcap\limits_{K\in\konf}K\subset\bigcap\limits_{L\in\lonf}L=\om(V;\Phi).\]
	
For the other inclusion, we have $\Phi^t(U)\subset\Phi^t(V)$ for each $t\ge0$ because relations preserve inclusion. Then,
	\begin{align*}
	\bigcup\limits_{t\ge s}\Phi^t(U)\subset&~ \bigcup\limits_{t\ge s}\Phi^t(V) \hbox{      for all }s\ge0\\
	\implies \ol{\bigcup\limits_{t\ge s}\Phi^t(U)}\subset& ~ \ol{\bigcup\limits_{t\ge s}\Phi^t(V)} \hbox{      for all }s\ge0\\
	\implies \hat{\om}(U;\Phi)=\bigcap\limits_{s\ge0}\ol{\bigcup\limits_{t\ge s}\Phi^t(U)}\subset& ~\bigcap\limits_{s\ge0}\ol{\bigcup\limits_{t\ge s}\Phi^t(V)}=\hat{\om}(V;\Phi).
	\end{align*}

\end{proof}

\begin{lem} \label{lem:ftRelnConf_Inc}
Let $\Phi$ be a multiflow over a compact Hausdorff space $X$ with confining set $B\subset X$. Consider the fixed times $s<t\in[0,\infty)$. The corresponding fixed time relations $\Phi^s$ and $\Phi^t$ satisfy the following:
\begin{enumerate}
\item $\Phi^t(B)\subset\Phi^s(B)$
\item $\Phi^t\big|_{B\times X}\subset\Phi^s\big|_{B\times X}$
\item $\om(B;\Phi^t)\subset\om(B;\Phi^s)$
\end{enumerate}
\end{lem}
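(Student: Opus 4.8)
The plan is to treat the three claims in order, deriving the first from the semigroup property together with monotonicity of images, isolating the second as the delicate point, and obtaining the third from the confining characterization of omega limit sets.

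For (1), I would write $t=s+(t-s)$ with $t-s>0$ and invoke the semigroup property to get $\Phi^t(B)=\Phi^s\bigl(\Phi^{t-s}(B)\bigr)$. Since $B$ is confining for $\Phi$ and $t-s>0$, we have $\Phi^{t-s}(B)\subset B$; applying monotonicity of relations (Lemma \ref{lem:relninclusion}(1)) to $\Phi^s$ then yields $\Phi^t(B)=\Phi^s(\Phi^{t-s}(B))\subset \Phi^s(B)$. The same computation shows the more general monotonicity $\Phi^\tau(B)\subset\Phi^{\tau'}(B)$ whenever $0\le \tau'\le\tau$, which I will reuse in (3).

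The step I expect to be the main obstacle is (2), the inclusion of restricted graphs $\Phi^t\big|_{B\times X}\subset \Phi^s\big|_{B\times X}$, because this is a \emph{pointwise} assertion $\Phi^t(x)\subset\Phi^s(x)$ for each $x\in B$, rather than a statement about the image of the whole set $B$ as in (1). The natural attempt is again to factor $\Phi^t=\Phi^s\circ\Phi^{t-s}$: given $(x,y)\in\Phi^t$ with $x\in B$, there is an intermediate $z\in\Phi^{t-s}(x)$ with $(z,y)\in\Phi^s$, and confining of $B$ guarantees $z\in B$. The difficulty is that this produces a pair $(z,y)\in\Phi^s$ based at $z$, not the required pair $(x,y)\in\Phi^s$ based at $x$; carrying the inclusion back to the original basepoint is exactly where the confining hypothesis must do real work, and this is the part of the argument I would scrutinize most carefully, reconciling it against the precise reading of the restriction notation before committing to it.

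For (3), I would deliberately avoid routing through the delicate (2) and instead use the confining structure directly. By Theorem \ref{ConfDispConfRel}(1), a set confining for $\Phi$ is confining for each fixed-time relation $\Phi^t$ and $\Phi^s$; assuming $B$ closed, Theorem \ref{thm:EquivOmLimRelnClConf} then gives $\om(B;\Phi^t)=\bigcap_{n\ge0}\Phi^{nt}(B)$ and $\om(B;\Phi^s)=\bigcap_{m\ge0}\Phi^{ms}(B)$. To compare these intersections I would apply Lemma \ref{lem:intersections}(2): given any $\Phi^{ms}(B)$, choose an integer $n$ with $nt\ge ms$, so that the general monotonicity established in (1) yields $\Phi^{nt}(B)\subset\Phi^{ms}(B)$. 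Hence every member of the family $\{\Phi^{ms}(B)\}_m$ dominates some member of $\{\Phi^{nt}(B)\}_n$, and Lemma \ref{lem:intersections}(2) delivers $\om(B;\Phi^t)=\bigcap_n\Phi^{nt}(B)\subset\bigcap_m\Phi^{ms}(B)=\om(B;\Phi^s)$, as required. Alternatively, if (2) is established in the relational sense $\Phi^t\subset\Phi^s$ on the relevant domain, then Lemma \ref{thm:KappaInclReln}(2) would give $\konf(B;\Phi^s)\subset\konf(B;\Phi^t)$ and hence (3) upon taking intersections.
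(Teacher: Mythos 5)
Your part (1) is exactly the paper's argument: write $t=s+\alpha$ with $\alpha=t-s>0$, use the semigroup property and the confining hypothesis to get $\Phi^t(B)=\Phi^s(\Phi^{\alpha}(B))\subset\Phi^s(B)$. No issues there, and the general time-monotonicity $\Phi^\tau(B)\subset\Phi^{\tau'}(B)$ for $\tau'\le\tau$ that you extract from it is correct.

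Your refusal to commit to part (2) is the most important thing in the proposal, and you should follow that instinct to its conclusion: under the standard reading of restriction ($\Phi^t\big|_{B\times X}=\Phi^t\cap(B\times X)$), claim (2) is \emph{false}, and the basepoint mismatch you describe is precisely why it cannot be repaired. Concretely, let $X=[-1,1]$, let $\Phi$ be the multiflow obtained from the flow of $\dot{x}=-x$, and let $B=X$ (or $B=[-\tfrac12,\tfrac12]$), which is confining. Then $\Phi^t\big|_{B\times X}=\{(x,e^{-t}x):x\in B\}$ and $\Phi^s\big|_{B\times X}=\{(x,e^{-s}x):x\in B\}$ intersect only at the origin, so neither contains the other when $s<t$; indeed, for any system with forward uniqueness, containment of graphs of functions over a common domain forces the functions to be equal, so (2) could only ever hold in degenerate situations. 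The paper's own proof of (2) does not survive scrutiny: as written it argues the \emph{reverse} inclusion $\Phi^s\big|_{B\times X}\subset\Phi^t\big|_{B\times X}$, it invokes $\Phi^s(B)\subset\Phi^t(B)$, which is the opposite of the just-proven part (1), and it concludes $(x,y)\in\Phi^t$ from ``$x\in B$ and $y\in\Phi^t(B)$,'' a non sequitur for relations, since $y$ lying in the image of the set $B$ does not place $y$ in the image of the particular point $x$. So the ``gap'' here is in the lemma, not in your proposal.

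For part (3), your route is genuinely different from the paper's and is the one that actually works. The paper deduces (3) from (2) together with Theorem 5.15 of \ref{bk:McGeheeAttractors} (that $\om(B;f)=\om(B;f\big|_{B\times X})$) and monotonicity of $\om$ in the relation; since (2) fails, that derivation collapses. Your argument is self-contained: $B$ confining for $\Phi$ is confining for each $\Phi^t$ (Theorem \ref{ConfDispConfRel}), each $\Phi^t$ is a closed relation because $\Phi$ is closed in $[0,\infty)\times X\times X$, so Theorem \ref{thm:EquivOmLimRelnClConf} gives $\om(B;\Phi^t)=\bigcap_{n\ge0}\Phi^{nt}(B)$ and $\om(B;\Phi^s)=\bigcap_{m\ge0}\Phi^{ms}(B)$; then for each $m$ choose $n$ with $nt\ge ms$, apply the monotonicity from (1), and conclude by Lemma \ref{lem:intersections}(2). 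The one caveat is the one you flagged: Theorem \ref{thm:EquivOmLimRelnClConf} requires $B$ closed, a hypothesis the lemma omits, so either add closedness to the statement (every application in the paper is to closed confining sets, so nothing is lost) or supply a separate argument for non-closed $B$. Your fallback via Lemma \ref{thm:KappaInclReln}(2) is logically fine but moot, since it is contingent on (2).
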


\begin{proof}
First, we rewrite $t=s+\alpha$ where $\alpha>0$. Because $B$ is a confining set,
		\[\Phi^t(B)=\Phi^{s}(\Phi^\alpha(B))\subset\Phi^s(B).\]
This gives us (1). Now, if $(x,y)\in\Phi^s\big|_{B\times X}$, then $x\in B$ and $y\in \Phi^s(B)\subset\Phi^t(B)$. So, $(x,y)\in\Phi^t\big|_{B\times X}$; we have (2). 
Finally, we use Theorem 5.15 from \ref{bk:McGeheeAttractors}, and get (3).
\end{proof}

\subsection{Attractors and Attracting Neighborhoods for Multiflows}\label{sec:AttAndAttBlk}

It is worth noting that an attractor is a compact set, which is invariant, but which also pulls neighboring objects in toward it. A set qualifying as an attractor depends on having an attracting neighborhood. So, we define an attractor and an attracting neighborhood almost in the same breath. The main theorems of the section - Theorems \ref{thm:AttBlkGivesAtt} and \ref{thm:AttImpliesAttBlk} -
 link these two ideas.
 
\begin{defn}
Let $\Phi$ be a multiflow over a space $X$. Then an {\bf attractor} $A$ for $\Phi$ is a compact invariant set, which has a compact neighborhood $U$ such that $\om(U;\Phi)=A$.
\end{defn}

\begin{defn}
Let $\Phi$ be a multiflow over a space $X$. Then an {\bf attracting neighborhood $B$} associated to $A$ if it is a compact neighborhood of $A$ satisfying
\begin{enumerate}
	\item $\om(B;\Phi)=A$
	\item $B$ is eventually strictly confining.
\end{enumerate}
\end{defn}

\begin{thm}\label{thm:AttMaxInvt}
Let $\Phi$ be a multiflow over a compact Hausdorff space $X$, and $A=\om(U;\Phi)$, where $A\subset U\subset X$. Then $A$ is maximally invariant in $U$.
\end{thm}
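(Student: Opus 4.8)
The plan is to read off the two obligations hidden in Definition~\ref{maxinvset}: to call $A$ \emph{maximally invariant in $U$} I must check that $A$ is an invariant set and that no invariant set $T$ satisfies $A \subsetneq T \subset U$. Throughout I will use only the representation $A = \om(U;\Phi) = \bigcap \konf(U;\Phi)$, where each $K \in \konf(U;\Phi)$ is closed, confining in the multiflow sense, and contains some forward image $\Phi^{t_K}(U)$. I would settle maximality first, since it is purely set-theoretic, and then work for invariance, which is where the real content lies.

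For maximality I would in fact prove the stronger claim that \emph{every} invariant $T \subset U$ satisfies $T \subset A$. If $T$ is invariant then $\Phi^t(T) = T$ for all $t \ge 0$, and since $T \subset U$ monotonicity of images (Lemma~\ref{lem:relninclusion}(1)) gives $T = \Phi^t(T) \subset \Phi^t(U)$ for every $t > 0$. Fixing any $K \in \konf(U;\Phi)$ and choosing $t_K$ with $\Phi^{t_K}(U) \subset K$, we get $T \subset \Phi^{t_K}(U) \subset K$; intersecting over all $K$ yields $T \subset \bigcap \konf(U;\Phi) = A$. In particular no invariant set can properly contain $A$ while lying inside $U$, so once $A$ is known to be invariant it is the maximum, hence maximal, invariant subset of $U$.

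It remains to prove $A$ invariant, and this is the step I expect to be the main obstacle. The confining inclusion $\Phi^t(A) \subset A$ is immediate: for each $K \in \konf(U;\Phi)$ we have $A \subset K$, so $\Phi^t(A) \subset \Phi^t(K) \subset K$ by Lemma~\ref{lem:relninclusion}(1) and confinement of $K$, and intersecting gives $\Phi^t(A) \subset A$. The reverse inclusion $A \subset \Phi^t(A)$ is the heart of the matter, and I would obtain it by a compactness/finite-intersection argument. First I would record that $\Phi^t$ carries $\konf(U;\Phi)$ into itself: for $K \in \konf(U;\Phi)$, the image $\Phi^t(K)$ is closed (Lemma~\ref{thm:closedImages}, using compactness of $X$), is confining (Theorem~\ref{confAndImages}(1)), and contains $\Phi^{t+t_K}(U)$, so $\Phi^t(K) \in \konf(U;\Phi)$. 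Consequently, for every $x \in A$ and every $K \in \konf(U;\Phi)$ we have $x \in A \subset \Phi^t(K)$, which means the set $(\Phi^t)^*(x) = \{y : x \in \Phi^t(y)\}$ meets $K$.

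Now comes the crux. The family $\{(\Phi^t)^*(x) \cap K\}_{K \in \konf(U;\Phi)}$ consists of closed subsets of the compact space $X$, and it has the finite intersection property: given $K_1,\dots,K_n \in \konf(U;\Phi)$, their intersection $K'$ again lies in $\konf(U;\Phi)$ (the finite-intersection theorem for $\konf$-sets, first proved by K.~J.~Meyer), and the argument just given shows $(\Phi^t)^*(x) \cap K' \neq \es$. By compactness the total intersection is nonempty, i.e.\ $(\Phi^t)^*(x) \cap A \neq \es$; picking $a$ in it gives $a \in A$ with $x \in \Phi^t(a) \subset \Phi^t(A)$. Hence $A \subset \Phi^t(A)$, and combined with the confining inclusion, $\Phi^t(A) = A$ for every $t > 0$, so $A$ is invariant. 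The one place to be careful is verifying that $(\Phi^t)^*(x)$ is genuinely closed, so that the finite-intersection property forces a nonempty limit; I would get this by applying Lemma~\ref{thm:closedImages} to the transpose relation $(\Phi^t)^*$ and the closed singleton $\{x\}$.
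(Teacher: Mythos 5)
Your proposal is correct, and it takes a genuinely different route from the paper. The paper gets invariance of $A$ by citing Lemma~\ref{thm:AttractorInvt}, which routes through the attractor machinery (Theorem~\ref{thm:AttImpliesAttBlk} to produce an eventually confining neighborhood, then Corollary~\ref{thm:EvConfOmegaEqual} to identify $\om(U;\Phi)$ with the fixed-time $\om(U;\Phi^t)$ and inherit invariance from the relation theory); it then gets maximality by contradiction, enlarging a putative bigger invariant set $A'$ via Lemma~\ref{thm:InvtUnionIntersection} and squeezing $A\subset A'\subset A$ with Theorem~\ref{thm:DispOmPresIncn} and Lemma~\ref{thm:InvtEqOwnOm}. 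You instead work directly from the definition $A=\bigcap\konf(U;\Phi)$: you show $\Phi^t$ maps $\konf(U;\Phi)$ into itself and then run a finite-intersection-property argument on the closed sets $(\Phi^t)^*(x)\cap K$, using K.~J.~Meyer's finite-intersection theorem for $\konf$-sets and compactness of $X$ to produce a preimage of $x$ inside $A$; this is essentially the relation-level invariance argument of \ref{bk:McGeheeAttractors} redone at the multiflow level. Your approach buys two things. First, it proves the stronger \emph{maximum} statement (every invariant $T\subset U$ satisfies $T\subset A$) by the one-line observation $T=\Phi^{t_K}(T)\subset\Phi^{t_K}(U)\subset K$, rather than arguing by contradiction; this also sidesteps the question of whether the competing invariant set $A'$ is compact, which the paper's appeal to Lemma~\ref{thm:InvtEqOwnOm} tacitly requires. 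Second, it is faithful to the hypotheses as stated: the theorem only assumes $A\subset U\subset X$, whereas the paper's invocation of Lemma~\ref{thm:AttractorInvt} implicitly treats $A$ as an attractor, i.e.\ treats $U$ as a compact neighborhood of $A$. The price is length: the paper's proof is a few lines once its machinery is in place, while yours rebuilds the compactness argument from scratch.
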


\begin{lem}\label{thm:InvtEqOwnOm}
If $\Phi$ is a multiflow over a compact Hausdorff space $X$, and $S$ is a compact invariant set, then \[\om(S;\Phi) = S.\]
\end{lem}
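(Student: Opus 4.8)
The plan is to prove the two inclusions $\om(S;\Phi) \subset S$ and $S \subset \om(S;\Phi)$ directly from the definition $\om(S;\Phi) = \bigcap\konf(S;\Phi)$, exploiting the fact that invariance of $S$ means $\Phi^t(S) = S$ for every $t \ge 0$. This is exactly the multiflow analogue of Theorem \ref{thm:EquivOmLimRelnClConf} specialized to the case where the confining set is already invariant, and the argument should be correspondingly short.

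First I would establish that $S$ itself belongs to $\konf(S;\Phi)$. Since $X$ is Hausdorff and $S$ is compact, $S$ is closed. Invariance gives $\Phi^t(S) = S \subset S$ for all $t > 0$, so $S$ is confining in the multiflow sense, and that same equality (for any fixed $t > 0$) witnesses the requirement that $\Phi^t(S) \subset S$ for some $t > 0$. Hence $S \in \konf(S;\Phi)$, and since $\om(S;\Phi)$ is the intersection of all members of $\konf(S;\Phi)$, it follows immediately that $\om(S;\Phi) \subset S$.

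For the reverse inclusion, I would take an arbitrary $K \in \konf(S;\Phi)$ and show $S \subset K$. By definition of $\konf(S;\Phi)$ there is some $t > 0$ with $\Phi^t(S) \subset K$; invariance collapses $\Phi^t(S)$ to $S$, so $S \subset K$. As $K$ was arbitrary, $S$ is contained in every set of the collection, hence in their intersection, giving $S \subset \om(S;\Phi)$. Combining the two inclusions yields $\om(S;\Phi) = S$.

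The proof carries no serious obstacle. The only step requiring genuine care is verifying that $S$ is a \emph{legitimate} element of $\konf(S;\Phi)$ — in particular that compactness together with the Hausdorff hypothesis supplies the closedness demanded by the definition of a $\konf$-set — and recognizing that invariance is precisely what lets one replace $\Phi^t(S)$ by $S$ in both directions. Without the Hausdorff assumption one could not in general guarantee that $S$ is closed, which is why that hypothesis is present in the statement.
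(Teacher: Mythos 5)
Your proof is correct, but it takes a genuinely different route from the paper's. The paper proves this lemma in one line by invoking Corollary \ref{thm:EvConfOmegaEqual}: since an invariant set is in particular eventually confining (and closed, by compactness plus Hausdorff), that corollary gives $\om(S;\Phi)=\hat{\om}(S;\Phi)$, and the strict omega limit set collapses immediately because $\bigcup_{t\ge s}\Phi^t(S)=S$ and $\ol{S}=S$. You instead argue directly from the definition $\om(S;\Phi)=\bigcap\konf(S;\Phi)$: one inclusion because $S$ itself is a legitimate member of $\konf(S;\Phi)$ (closed, confining, containing its own forward image), the other because invariance forces $\Phi^t(S)=S\subset K$ for every $K\in\konf(S;\Phi)$. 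Both arguments are sound, but they buy different things. The paper's proof is shorter on the page yet rests on the full weight of Corollary \ref{thm:EvConfOmegaEqual}, which in turn chains together Theorems \ref{thm:OmHRiOmR}, \ref{thm:ECOmRiOmHR}, \ref{thm:OmHats}, \ref{thm:OmRiOmD}, and \ref{thm:ECOmDiOmHD}, all of which require $X$ compact. Your proof is self-contained and slightly more general: it never uses compactness of the ambient space $X$, only compactness of $S$ together with the Hausdorff hypothesis to guarantee $S$ is closed, exactly as you note in your final paragraph. That observation is accurate and is in fact the only point in your argument where the hypotheses on $X$ and $S$ enter at all.
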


\begin{proof}
Because $S$ is invariant, it is also eventually confining, so by Corollary \ref{thm:EvConfOmegaEqual}
	\[\om(S;\Phi)=\hat{\om}(S;\Phi) = \bigcap\limits_{s\ge0}\ol{\bigcup\limits_{t\ge s}\Phi^t(S)} = \bigcap\limits_{s\ge0}\ol{S} = \bigcap\limits_{s\ge0}S = S.				\]
\end{proof}

\begin{lem}\label{thm:InvtUnionIntersection}
Say $\Phi$ is a multiflow over a space $X$ and $\frak{A}$ is a set of subsets $A\subset X$, which are invariant (with respect to $\Phi$), then
	\begin{itemize}
		\item $\bigcup \frak{A}$ is invariant, and
		\item $\bigcap \frak{A}$ is confining.
	\end{itemize}
\end{lem}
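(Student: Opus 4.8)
The plan is to peel off the quantifier over time and reduce everything to the fixed-time relations $\Phi^t$, where the corresponding facts are already recorded in Lemma~\ref{lem:relnUandInt} (equivalently Theorem~\ref{thm:RelnInvcUandInt}). By Definition~\ref{defn:DispInvariance}, a set $S$ is invariant for $\Phi$ exactly when $\Phi^t(S)=S$ for \emph{every} $t\ge0$, and confining for $\Phi$ exactly when $\Phi^t(S)\subset S$ for every $t\ge0$; in particular, the hypothesis that each $A\in\frak{A}$ is invariant means $\Phi^t(A)=A$ for all $t\ge0$. So it suffices to fix an arbitrary $t\ge0$, prove the two set relations for the single relation $\Phi^t$, and then observe that $t$ was arbitrary.

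For the union, I would fix $t\ge0$ and apply Lemma~\ref{lem:relnUandInt}(1) to the relation $\Phi^t$:
\[
\Phi^t\!\left(\bigcup\frak{A}\right)=\bigcup_{A\in\frak{A}}\Phi^t(A)=\bigcup_{A\in\frak{A}}A=\bigcup\frak{A},
\]
where the middle equality uses the invariance of each $A$. Since this holds for every $t\ge0$, $\bigcup\frak{A}$ is invariant. For the intersection, I would again fix $t\ge0$ and apply Lemma~\ref{lem:relnUandInt}(2), which only yields an inclusion:
\[
\Phi^t\!\left(\bigcap\frak{A}\right)\subset\bigcap_{A\in\frak{A}}\Phi^t(A)=\bigcap_{A\in\frak{A}}A=\bigcap\frak{A}.
\]
As $t\ge0$ was arbitrary, $\bigcap\frak{A}$ is confining.

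There is essentially no obstacle here beyond invoking the relation-level lemma slice by slice; the content is inherited entirely from the fact that relations preserve unions but only semi-preserve intersections. The one point worth flagging is \emph{why} the intersection cannot be promoted from confining to invariant: Lemma~\ref{lem:relnUandInt}(2) furnishes only the inclusion $\Phi^t(\bigcap\frak{A})\subset\bigcap_{A}\Phi^t(A)$, and Example~\ref{ex:RelnsDoNotPresIntn} shows this inclusion can be strict even for a finite family of invariant sets. Hence ``confining'' is the best general conclusion for the intersection, exactly mirroring the relation case in Theorem~\ref{thm:RelnInvcUandInt}.
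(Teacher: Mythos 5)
Your proposal is correct and follows the same route as the paper: reduce invariance for $\Phi$ to invariance for each fixed-time relation $\Phi^t$, apply the relation-level result for each $t\ge0$, and conclude since $t$ was arbitrary. The only cosmetic difference is that you unpack the computation via Lemma~\ref{lem:relnUandInt} where the paper simply cites Theorem~\ref{thm:RelnInvcUandInt}, which is that same computation packaged as a theorem.
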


\begin{proof}
Let $\Phi$, $X$ and $\frak{A}$ be as described. Then for each $A\in\frak{A}$, $A$ is invariant for each fixed time relation $\Phi^t$, $t\ge0$. Thus, for each $t\ge0$, 
	\[   \Phi^t\left(\bigcup\frak{A}\right)=\bigcup\frak{A}\hbox{ and }\Phi^t\left(\bigcap\frak{A}\right)\subset\bigcap\frak{A}\] 
by Theorem \ref{thm:RelnInvcUandInt}.

\end{proof}

\begin{lem}\label{thm:AttractorInvt}
Let $\Phi$ be a multiflow over a compact Hausdorff space $X$. Then if $A$ is an attractor for $\Phi$, then it is an invariant set, which has a neighborhood $U$ such that $\om(U;\Phi^t)=A$ for all fixed time relations $\Phi^t$, $t>0$.
\end{lem}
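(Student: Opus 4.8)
The plan is to separate the statement into its two assertions and dispatch the first one immediately. Invariance of $A$ is not really something to prove here: an attractor is \emph{defined} to be a compact invariant set, so $A$ is invariant by hypothesis. The substantive claim is the existence of a neighborhood witnessing the stronger, fixed-time property $\om(U;\Phi^t)=A$ for \emph{every} $t>0$. The key observation that keeps this short is that I would not construct a new (e.g.\ eventually confining) neighborhood from scratch; instead I claim that the very neighborhood $U$ supplied by the attractor definition, namely the compact neighborhood with $\om(U;\Phi)=A$, already does the job. I would then establish $\om(U;\Phi^t)=A$ by proving the two inclusions separately.

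For the inclusion $\om(U;\Phi^t)\subset A$, I would simply invoke Theorem \ref{thm:OmRiOmD}, which gives $\om(U;\Phi^t)\subset\om(U;\Phi)$ for any $t>0$. Since $\om(U;\Phi)=A$ by the definition of $A$ being an attractor, this direction is finished with no extra work.

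For the reverse inclusion $A\subset\om(U;\Phi^t)$, the idea is to pass through the omega limit set of $A$ itself under the fixed-time relation. Because $A$ is compact and invariant, it is immediately (hence eventually) confining for $\Phi$, so Lemma \ref{thm:InvtEqOwnOm} gives $\om(A;\Phi)=A$, and Corollary \ref{thm:EvConfOmegaEqual} upgrades this to $\om(A;\Phi^t)=\om(A;\Phi)=A$ for every fixed time $t>0$. Now $A\subset U$ since $U$ is a neighborhood of $A$, and omega limit sets of a \emph{relation} are monotone in their argument: applying Lemma \ref{thm:KappaInclReln}(1) with $f=\Phi^t$ and $A\subset U$ yields $\konf(U;\Phi^t)\subset\konf(A;\Phi^t)$, whence $\om(A;\Phi^t)=\bigcap\konf(A;\Phi^t)\subset\bigcap\konf(U;\Phi^t)=\om(U;\Phi^t)$. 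Combining with $\om(A;\Phi^t)=A$ gives $A\subset\om(U;\Phi^t)$, and together with the first inclusion we conclude $\om(U;\Phi^t)=A$ for all $t>0$.

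The only subtle point, and the place I would be most careful, is this reverse inclusion: one must momentarily stop thinking of $\Phi^t$ as a slice of the multiflow and treat it as a plain relation, so that the relation-level tools (monotonicity of $\konf$ via Lemma \ref{thm:KappaInclReln}(1), together with the fixed-time equality from Corollary \ref{thm:EvConfOmegaEqual}) apply. Recognizing that the same neighborhood $U$ serves for every $t$, rather than attempting to manufacture an eventually strictly confining block — which is properly the content of the later attractor/attracting-neighborhood correspondence theorems — is what makes the argument a short two-inclusion proof.
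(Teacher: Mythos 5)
Your proof is correct, and it takes a genuinely different route from the paper's. The paper does not keep the neighborhood supplied by the definition of attractor: it first invokes Theorem \ref{thm:AttImpliesAttBlk} (whose proof only appears later in the section) to shrink $U'$ to an eventually strictly confining neighborhood $U$, then applies Corollary \ref{thm:EvConfOmegaEqual} to that single eventually confining set to obtain $A=\om(U;\Phi)=\om(U;\Phi^t)$ for all $t>0$ in one stroke, and finally reads off invariance of $A$ from the fact that $A$ is then an omega limit set of each closed relation $\Phi^t$. You instead retain the original neighborhood and prove two inclusions: $\om(U;\Phi^t)\subset\om(U;\Phi)=A$ via Theorem \ref{thm:OmRiOmD}, which needs no hypothesis on $U$, and $A=\om(A;\Phi^t)\subset\om(U;\Phi^t)$ by applying Lemma \ref{thm:InvtEqOwnOm} and Corollary \ref{thm:EvConfOmegaEqual} to $A$ itself (legitimate: $A$ is compact in a Hausdorff space, hence closed, and invariance makes it trivially confining) together with monotonicity from Lemma \ref{thm:KappaInclReln}(1) and Lemma \ref{lem:intersections}. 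Your dismissal of the invariance clause is also sound, since the paper's definition of an attractor already demands a compact invariant set, so the paper's re-derivation of invariance is redundant. What your route buys: it eliminates the forward reference to Theorem \ref{thm:AttImpliesAttBlk} --- whose proof needs the finite-intersection machinery of Lemma \ref{thm:finIntnNbhd} --- making the lemma self-contained among results already established, and it yields the slightly stronger conclusion that \emph{every} compact neighborhood $U$ with $\om(U;\Phi)=A$ witnesses the fixed-time property, not merely some shrunken sub-neighborhood. What the paper's route buys: the neighborhood it constructs is eventually strictly confining, i.e.\ an attracting neighborhood, which is the structurally stronger object the section is building toward, and its argument establishes invariance directly from the relation-level representation rather than by appeal to the definition.
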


\begin{proof}
Let's first assume that $A$ is an attractor. That ensures the existence of a neighborhood $U'$ of $A$ such that $\om(U';\Phi)=A$. By Theorem \ref{thm:AttImpliesAttBlk}, there is an eventually confining neighborhood $U$ of $A$ (with $U\subset U'$). By Corollary \ref{thm:EvConfOmegaEqual}, $A=\om(U;\Phi)=\om(U;\Phi^t)$ for all $t>0$. Since we are able to define $A$ by the relations definition of the omega limit set, we have that $A$ is invariant for $\Phi^t$ for all $t>0$, meaning $A$ is invariant in the multiflow sense.

\end{proof}

\begin{proof}{\it(of Theorem \ref{thm:AttMaxInvt})}

First, Lemma \ref{thm:AttractorInvt} guarantees that $A$ is invariant. Next, let us assume $A$ is not the maximal invariant set in $U$. Then, there is some other invariant set $A'\subset U$, which contains elements not in $A$. Lemma \ref{thm:InvtUnionIntersection} states that $A\cup A'$ is also invariant, so we may as well assume $A\subset A'$. Then, we have
	\begin{align*} A\subset A' \subset U \implies&  \om(A) \subset \om(A')\subset \om(U) \hbox{ (Theorem \ref{thm:DispOmPresIncn})}  \\
								\implies& A\subset A' \subset A \hbox{ (Lemma \ref{thm:InvtEqOwnOm})}.
	\end{align*}
Thus, $A=A'$, but we assumed $A'$ had elements, which were not in $A$, giving us our contradiction. Therefore, $A$ must be maximally invariant.
\end{proof}

\begin{thm}\label{thm:AttBlkGivesAtt}
Let $\Phi$ be a multiflow over a compact Hausdorff space $X$, and let $B$ be a closed attracting neighborhood. Then, $B$ contains a unique attractor, $A=\om(B;\Phi)$.
\end{thm}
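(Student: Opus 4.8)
The plan is to take $A := \om(B;\Phi)$ and verify each clause of the definition of an attractor, using the fact that a closed attracting neighborhood is in particular compact and eventually strictly confining (hence eventually confining). Compactness of $A$ is immediate: it is an intersection of closed subsets of the compact Hausdorff space $X$. The real work is invariance and the interior containment $A\subset\Int(B)$, after which uniqueness is essentially bookkeeping.

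For invariance, I would pass to the fixed-time relations. Since $B$ is eventually confining, Corollary \ref{thm:EvConfOmegaEqual} identifies all four notions of omega limit set; in particular $A=\om(B;\Phi^t)$ for every $t>0$. The relations omega limit set $\om(B;\Phi^t)$ is invariant under the relation $\Phi^t$ (a result of \ref{bk:McGeheeAttractors}), so $\Phi^t(A)=A$ for every $t>0$, and trivially $\Phi^0(A)=A$. This is exactly invariance of $A$ in the multiflow sense. Note that this argument does not presuppose that $A$ is an attractor, so there is no circularity with Lemma \ref{thm:AttractorInvt}.

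Next I would show $A\subset\Int(B)$, which is where eventual \emph{strict} confinement is used. Let $T>0$ satisfy $\Phi^t(B)\subset\Int(B)$ for all $t\ge T$. Writing $A=\hat{\om}(B;\Phi)=\bigcap_{s\ge0}\ol{\bigcup_{t\ge s}\Phi^t(B)}$ (again by Corollary \ref{thm:EvConfOmegaEqual}) and taking the term $s=T$, the union $\bigcup_{t\ge T}\Phi^t(B)$ lies in $\Int(B)\subset B$, so its closure lies in the closed set $B$; hence $A\subset B$. Combining this with invariance and Lemma \ref{lem:relninclusion} yields $A=\Phi^T(A)\subset\Phi^T(B)\subset\Int(B)$, so $A\subset\Int(B)$ and $B$ is genuinely a neighborhood of $A$. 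Together with the definitional equality $\om(B;\Phi)=A$, this shows $A$ is a compact invariant set admitting the compact neighborhood $B$ with $\om(B;\Phi)=A$, i.e.\ an attractor having $B$ as an attracting neighborhood.

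Finally, for uniqueness I would invoke Theorem \ref{thm:AttMaxInvt}: since $A=\om(B;\Phi)$ with $A\subset B$, the set $A$ is the maximal invariant subset of $B$, which is unique by the definition of maximality. Any attractor for which $B$ serves as an attracting neighborhood must, by definition, equal $\om(B;\Phi)=A$, so the attractor associated to $B$ is exactly $A$ and no other. The main obstacle is the invariance step: it hinges on importing the invariance of relation omega limit sets from \ref{bk:McGeheeAttractors} and on the bridge provided by Corollary \ref{thm:EvConfOmegaEqual}, which lets the multiflow object $\om(B;\Phi)$ be computed through a single fixed-time relation. The strict-confinement upgrade from $A\subset B$ to $A\subset\Int(B)$ is the only place where strictness (as opposed to mere eventual confinement) is actually required.
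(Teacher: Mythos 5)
Your proof is correct, and its first half --- invariance --- is exactly the paper's argument: both pass through Corollary \ref{thm:EvConfOmegaEqual} to identify $\om(B;\Phi)$ with $\om(B;\Phi^t)$ for every $t>0$ and then import the invariance of relation omega limit sets from \ref{bk:McGeheeAttractors}. Where you genuinely diverge is the containment $A\subset\Int(B)$. The paper stays inside the $\konf$-machinery: by Theorem \ref{thm:KonfNonEmpty} the set $K_T=\bigcap_{t\in[0,T]}\Phi^t(B)$ belongs to $\konf(B;\Phi)$, so $A=\bigcap\konf(B;\Phi)\subset K_T\subset\Phi^T(B)\subset\Int(B)$, a chain that never uses invariance of $A$. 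You instead use the strict omega limit description $A=\hat{\om}(B;\Phi)$ to extract the weaker inclusion $A\subset B$ from the $s=T$ term, and then upgrade it using the already-established invariance: $A=\Phi^T(A)\subset\Phi^T(B)\subset\Int(B)$ via Lemma \ref{lem:relninclusion}. Your route trades Theorem \ref{thm:KonfNonEmpty} for monotonicity plus invariance; it is slightly more elementary (no need for the $K_T$ construction), at the cost of making the neighborhood step logically dependent on the invariance step, whereas in the paper the two halves are independent. You also make uniqueness explicit via Theorem \ref{thm:AttMaxInvt} (maximality of $\om(B;\Phi)$ in $B$), which the paper leaves implicit; this is a worthwhile addition, and it introduces no circularity, since Theorem \ref{thm:AttMaxInvt} rests on Lemma \ref{thm:AttractorInvt} and Theorem \ref{thm:AttImpliesAttBlk}, not on the present theorem.
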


\begin{proof}
Let $\Phi$, $X$, and $B$ be as described. Set $A=\om(B;\Phi)$. Because $B$ is eventually confining $A=\om(B;\Phi)=\om(B;\Phi^t)$ for all $t>0$ (by Corollary \ref{thm:EvConfOmegaEqual}). Recall that the latter is the omega limit set for the fixed-time relation $\Phi^t$, which is invariant under $\Phi^t$. Thus $\Phi^t(A)=A$ for all $t>0$, making $A$ invariant.

For the other half of the definition, consider $B$, which we will show is a neighborhood of $A$. Because $B$ is eventually strictly confining, there is some $T>0$ for which $\Phi^t(B)\subset \Int(B)$ for all $t>T$. In particular, $\Phi^T(B)\subset\Int(B)$. 
By Theorem \ref{thm:KonfNonEmpty}, the set $K_T = \bigcap\limits_{t\in[0,T]}\Phi^t(B)\in\konf(B;\Phi)$. Thus,
	\[ A = \bigcap\konf(B;\Phi) \subset K_T \subset \Phi^T(B)\subset \Int(B).\]

\end{proof}

There was another version of this proof, constructed before Corollary \ref{thm:EvConfOmegaEqual} (on which this proof relies) was proven. It is provided in Appendix \ref{sec:AppendixA}. The proof is no longer necessary, but there are techniques included there, which may prove useful for studying multiflows in the future. Also, using that proof means one can take for granted that $A=\om(B)$ is invariant. We do not get that for free here. It is true, but this fact requires a bit of preamble.

\begin{thm}\label{thm:AttImpliesAttBlk}
Let $\Phi$ be a multiflow over a compact Hausdorff space $X$. Let $A$ be an attractor for $\Phi$ and let $V$ be a neighborhood of $A$. Then, there exists a closed attracting neighborhood $B\subset V$ associated with $A$.
\end{thm}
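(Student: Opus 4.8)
The plan is to construct the attracting neighborhood directly as the closure of a small open neighborhood of $A$ squeezed inside $V$, and then verify the two defining conditions ($\om(B;\Phi)=A$ and eventual strict confinement) using monotonicity of $\om$ together with the finite-intersection structure of the $\konf$-sets. The only nonroutine ingredient is a multiflow analogue of Corollary~5.6, which I would extract from compactness.

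First I would fix the geometry. Since $A$ is an attractor there is a compact neighborhood $U$ with $\om(U;\Phi)=A$, and $A\subset U^\circ\cap V^\circ$, an open set. As $X$ is compact Hausdorff, hence normal, and $A$ is closed, I would choose an open $W$ with $A\subset W\subset\ol{W}\subset U^\circ\cap V^\circ$ and set $B:=\ol{W}$. By construction $B$ is compact, $A\subset W\subset\Int(B)$ so $B$ is a neighborhood of $A$, and $B\subset V$. Because $A\subset B\subset U$, Theorem~\ref{thm:DispOmPresIncn} (monotonicity) and Lemma~\ref{thm:InvtEqOwnOm} ($\om(A;\Phi)=A$ for the compact invariant set $A$) give $A=\om(A;\Phi)\subset\om(B;\Phi)\subset\om(U;\Phi)=A$, so condition (1), $\om(B;\Phi)=A$, holds for free.

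The heart of the matter is eventual strict confinement of $B$, for which I need the forward images of $U$ to land inside the open set $W$ after a finite time. Here I would use that $\om(U;\Phi)=\bigcap\konf(U;\Phi)=A\subset W$; equivalently, the closed (hence compact) sets $\{K\setminus W: K\in\konf(U;\Phi)\}$ have empty total intersection. By the finite intersection property in the compact space $X$, finitely many of them already intersect emptily, so $K_1\cap\cdots\cap K_n\subset W$ for some $\konf$-sets $K_i$; and by the theorem that a finite intersection of $\konf$-sets is again a $\konf$-set, $K:=\bigcap_i K_i\in\konf(U;\Phi)$ with $K\subset W$. This compactness step is the single place where real work happens — it is exactly the multiflow version of Corollary~5.6 and is what prevents the argument from collapsing to ``the intersection is in $W$, therefore some forward image is.''

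Given such a $K$, the definition of $\konf(U;\Phi)$ supplies $t_0>0$ with $\Phi^{t_0}(U)\subset K$, and since $K$ is confining the lemma following the definition of $\konf(U;\Phi)$ yields $\Phi^t(U)\subset K$ for all $t\ge t_0$. Then, using $B=\ol{W}\subset U$ and that relations preserve inclusion (Lemma~\ref{lem:relninclusion}), for every $t\ge t_0$
\[
	\Phi^t(B)\subset\Phi^t(U)\subset K\subset W\subset\Int(B),
\]
so $B$ is eventually strictly confining with threshold $T=t_0$. Collecting this with the neighborhood property, $B\subset V$, compactness, and $\om(B;\Phi)=A$, the set $B$ is the desired closed attracting neighborhood associated with $A$. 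Everything outside the finite-intersection step is bookkeeping with monotonicity of $\om$ and inclusion of images.
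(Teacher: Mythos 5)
Your proof is correct and takes essentially the same route as the paper's: the same monotonicity sandwich $A=\om(A;\Phi)\subset\om(B;\Phi)\subset\om(U;\Phi)=A$ gives condition (1), and the same finite-intersection compactness argument on $\konf$-sets gives eventual strict confinement. The differences are cosmetic: you build $B$ by a normality squeeze inside $U^\circ\cap V^\circ$ rather than the paper's $B=N\cap V$, and you run the compactness step in $\konf(U;\Phi)$ (re-deriving Lemma \ref{thm:finIntnNbhd} from the finite intersection property and packaging the finite intersection as a single $\konf$-set) where the paper applies Lemma \ref{thm:finIntnNbhd} directly to $\konf(B;\Phi)$ and finishes by taking the maximum of the finitely many entry times $t_i$.
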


\begin{lem}\label{thm:ClosedNbdofAtt}
If $\Phi$ is a multiflow over a compact Hausdorff space $X$ and $A$ is an attractor for $\Phi$, then there is a closed neighborhood $N$ of $A$ such that $\om(N)=A$.
\end{lem}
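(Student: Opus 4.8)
The plan is to start from exactly the data the definition of attractor hands us and do essentially one topological upgrade. By definition $A$ is a compact invariant set possessing a compact neighborhood $U$ with $\om(U;\Phi)=A$; what the lemma asks for is a \emph{closed} neighborhood with the same omega limit set. The guiding observation is that in a compact Hausdorff space every compact set is automatically closed, so $U$ is already a closed neighborhood and in principle one could take $N=U$. However, since the theorem this lemma feeds (Theorem \ref{thm:AttImpliesAttBlk}) ultimately needs to place its neighborhood inside a prescribed $V$, I would produce a closed neighborhood sitting strictly inside $U$, and then verify that shrinking does not change the omega limit set.

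The key steps, in order, are as follows. First, invoke the definition of attractor to fix a compact neighborhood $U$ of $A$, so that $A\subset\Int(U)$ and $\om(U;\Phi)=A$, and record that $A$ is invariant (this is part of the definition). Second, use that a compact Hausdorff space is regular: separate the compact set $A$ from the closed set $X\setminus\Int(U)$ to obtain an open set $W$ with $A\subset W\subset\ol{W}\subset\Int(U)$, and set $N=\ol{W}$, a closed neighborhood of $A$ with $A\subset\Int(N)\subset N\subset U$. Third, apply monotonicity of the omega limit set (Theorem \ref{thm:DispOmPresIncn}) to the inclusion $N\subset U$, yielding $\om(N;\Phi)\subset\om(U;\Phi)=A$. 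Fourth, apply Lemma \ref{thm:InvtEqOwnOm} to the compact invariant set $A$ to get $\om(A;\Phi)=A$, and combine this with monotonicity applied to $A\subset N$ to obtain $A=\om(A;\Phi)\subset\om(N;\Phi)$. The two inclusions together give $\om(N;\Phi)=A$, which is the claim.

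There is no deep obstacle here; the entire argument rests on a monotonicity sandwich $A\subset N\subset U$ together with the fact that an invariant set is its own omega limit set. The only points requiring genuine care are topological rather than dynamical: ensuring that the regularity (normality) of the compact Hausdorff space is what licenses the closed neighborhood $N$ with $A$ in its interior and $N\subset U$, and being explicit that the inclusion $A\subset N$ (needed for the lower bound $A\subset\om(N;\Phi)$) holds because $N$ was chosen to contain $A$. I expect the subtlest conceptual point, worth a sentence in the writeup, to be that both the upper and lower bounds on $\om(N;\Phi)$ are instances of the same monotonicity theorem, pinned against $A$ from above via $\om(U;\Phi)=A$ and from below via $\om(A;\Phi)=A$.
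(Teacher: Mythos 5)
Your proposal is correct and is essentially the paper's own proof: both arguments shrink the given neighborhood to a closed neighborhood $N$ with $A\subset\Int(N)\subset N\subset U$ and then pin $\om(N;\Phi)$ between $\om(A;\Phi)=A$ (Lemma \ref{thm:InvtEqOwnOm}) and $\om(U;\Phi)=A$ via monotonicity (Theorem \ref{thm:DispOmPresIncn}). Your added remarks --- that compactness of $U$ in a Hausdorff space already makes it closed, and that regularity is what licenses the interior closed neighborhood --- only make explicit what the paper leaves implicit.
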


\begin{proof}
Because $A$ is an attractor, we are guaranteed some neighborhood $N'$ of $A$ with $\om(N')=A$. There is guaranteed to be a closed set $N$ such that
	\[ A\subset\Int(N)\subset N\subset N'.\]
By Lemma \ref{thm:InvtEqOwnOm} and Theorem \ref{thm:DispOmPresIncn},
	\[ A=\om(A;\Phi)\subset\om(N;\Phi)\subset\om(N';\Phi)=A.\]
\end{proof}

\begin{lem}{Theorem 2.8 from \ref{bk:McGeheeAttractors}}\label{thm:finIntnNbhd}
If $\konf$ is a set of closed subsets of a compact topological space and if $U$ is a neighborhood of $\bigcap\konf$, then there exists a finite subset $\frak{F}$ of $\konf$ such that $\frak{F}\subset U$.
\end{lem}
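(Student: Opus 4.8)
The plan is to run the standard open-cover argument on complements, using compactness of $X$, with the convention (consistent with the shorthand $\bigcap\mathfrak{S}\equiv\bigcap_{S\in\mathfrak{S}}S$ established earlier) that the conclusion ``$\frak{F}\subset U$'' means $\bigcap\frak{F}\subset U$. First I would reduce to an open set: since $U$ is a neighborhood of $\bigcap\konf$, its interior $V=\Int(U)$ is open and satisfies $\bigcap\konf\subset V\subset U$. Passing to $V$ costs nothing, since any finite subcollection of $\konf$ whose intersection lies in $V$ will automatically lie in $U$.

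Next I would complement everything. Let $C=X\setminus V$; this is a closed subset of the compact space $X$, hence compact. The hypothesis $\bigcap\konf\subset V$ says precisely that $C\cap\bigcap\konf=\es$, which is equivalent to
\[ C\subset X\setminus\bigcap_{K\in\konf}K=\bigcup_{K\in\konf}(X\setminus K). \]
Each $X\setminus K$ is open because each $K\in\konf$ is closed, so $\{X\setminus K\}_{K\in\konf}$ is an open cover of the compact set $C$. Compactness then yields a finite subcover $C\subset\bigcup_{i=1}^{n}(X\setminus K_i)$ for some $K_1,\dots,K_n\in\konf$.

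Finally I would complement back. Taking complements in the finite subcover gives
\[ \bigcap_{i=1}^{n}K_i\subset X\setminus C=V\subset U. \]
Setting $\frak{F}=\{K_1,\dots,K_n\}$, a finite subset of $\konf$, we obtain $\bigcap\frak{F}\subset U$, as desired.

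I do not expect a serious obstacle: the argument is purely point-set topology and relies only on the fact that a closed subset of a compact space is compact, so the Hausdorff hypothesis appearing elsewhere in the section is not needed here. The only points requiring care are bookkeeping — interpreting ``$\frak{F}\subset U$'' as $\bigcap\frak{F}\subset U$, and remembering to pass through the interior $V$ of $U$ so that the open-cover step is legitimate — together with the trivial edge case $C=\es$ (that is, $U=X$), where the empty subcollection already suffices.
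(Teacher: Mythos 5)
Your proof is correct, and it is the canonical argument: pass to the open interior, complement to turn the closed members of $\konf$ into an open cover of the compact set $X\setminus\Int(U)$, extract a finite subcover, and complement back. Note that the paper itself supplies no proof of this lemma (it is imported as Theorem 2.8 of \ref{bk:McGeheeAttractors}; the proof environment that follows it belongs to Theorem \ref{thm:AttImpliesAttBlk}), so there is nothing to diverge from; your two bookkeeping observations --- that ``$\frak{F}\subset U$'' must be read as $\bigcap\frak{F}\subset U$ under the paper's intersection shorthand, and that Hausdorffness is not needed since a closed subset of a compact space is compact --- are both right.
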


\begin{proof}(of Theorem \ref{thm:AttImpliesAttBlk})
Let $\Phi,$ $X$, and $A$ be as described. Let $V$, a neighborhood of the attractor $A$, be given. Since we could always find a closed neighborhood $V'$ of $A$ with $V'\subset V$, we may as well assume $V$ is closed. Because $A$ is an attractor, there is some neighborhood $N$ of $A$ such that $\om(N;\Phi)=A$. By Lemma \ref{thm:ClosedNbdofAtt}, we may as well assume $N$ is closed, too. Let $B=N\cap V$. Clearly then $B$ is closed and $B\subset V$. We will show in addition that
\begin{enumerate}
	\item $\om(B;\Phi)=A$
	\item $B$ is eventually strictly confining.
\end{enumerate}

For the first item, we note that 
	\[A\subset (N\cap V) = B\subset N \implies  A = \om(A;\Phi)\subset \om(B;\Phi)\subset \om(N;\Phi)=A.\]
	
For the second, we consider $\Int(N\cap V)=\Int(B)$, which is also a neighborhood of $A=\bigcap\konf(B;\Phi)$. By Lemma \ref{thm:finIntnNbhd}, there exists some finite subset $\{K_i\}_{1\le i \le n}=\frak{F}\subset \konf(B;\Phi)$ such that $\bigcap\frak{F}\subset \Int(B)$. For each $K_i\in\frak{F}$ there is a $t_i$ such that $\Phi^{t_i}(B)\subset K_i$, and all the $K_i$ are confining. Let $T=\max\{t_i\}_{1\le i\le n}$, and $\Phi^t(B)\subset K_i$ for all $t\ge T$ and $1\le i \le n$, meaning that
	\[ \Phi^t(B)\subset \bigcap\frak{F} \subset \Int(B)\]
for all $t\ge T$. So, $B$ is eventually strictly confining.
\end{proof}

Looking at smaller time intervals can be useful in determining attracting behavior over all time, as the following theorem demonstrates.

\begin{thm}\label{Thm:AttBlkForwardTime}
Let $\Phi\cl\subset [0,\infty)\times X\times X$ be a multiflow. If $B\subset X$ is an attractor block for all the fixed time relations $\{\Phi^s\}_{s\in(0,T]}$, for some $T>0$. Then $B$ is an attractor block for all $\{\Phi^t\}_{t\in(0,\infty)}$.
\end{thm}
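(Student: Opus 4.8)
The plan is to reduce everything to the hypothesis on the short time window $(0,T]$, combined with the semigroup property and the monotonicity of relations (Lemma \ref{lem:relninclusion}). Recall that ``$B$ is an attractor block for a relation $f$'' means $f(\ol{B})\subset\Int(B)$, so the hypothesis is that $\Phi^s(\ol{B})\subset\Int(B)$ for every $s\in(0,T]$, and the goal is to upgrade this to every $t\in(0,\infty)$. For $t\in(0,T]$ there is nothing to prove, so I would fix $t>T$.

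First I would subdivide the time interval. Choose $n=\lceil t/T\rceil$, so that $n\ge 2$ and $s:=t/n\in(0,T]$. By the semigroup property, $\Phi^t=\Phi^{ns}=\Phi^s\circ\Phi^s\circ\cdots\circ\Phi^s$ ($n$ factors); in other words $\Phi^t=(\Phi^s)^n$ as a composition of the single fixed-time relation $\Phi^s$. Since $s\in(0,T]$, the relation $f:=\Phi^s$ is itself an attractor block by hypothesis.

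The core of the argument is then to show that a power of an attractor-block relation is again an attractor block. I would prove by induction on $k$ that $f^k(\ol{B})\subset\Int(B)$ for all $k\ge1$. The base case $k=1$ is the hypothesis. For the inductive step, assume $f^{k-1}(\ol{B})\subset\Int(B)$; since $\Int(B)\subset\ol{B}$, monotonicity (Lemma \ref{lem:relninclusion}(1)) gives
\[
f^{k}(\ol{B})=f\!\left(f^{k-1}(\ol{B})\right)\subset f(\ol{B})\subset\Int(B).
\]
Applying this with $k=n$ yields $\Phi^t(\ol{B})=f^n(\ol{B})\subset\Int(B)$, i.e. $B$ is an attractor block for $\Phi^t$, completing the proof.

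There is no deep obstacle here; the only point requiring care is the re-feeding step, where one must observe that the intermediate image $f^{k-1}(\ol{B})$ lands not merely in $B$ but inside $\ol{B}$ (via $\Int(B)\subset\ol{B}$), so that the attractor-block inclusion for $f$ can be reapplied through monotonicity. A secondary check is that the equal subdivision $s=t/n$ indeed lies in $(0,T]$, which is immediate from $t/T\le n$. If one preferred unequal pieces $t=s_1+\cdots+s_n$ with each $s_i\in(0,T]$, the same induction works verbatim with $f^{k}$ replaced by the composition $\Phi^{s_1}\circ\cdots\circ\Phi^{s_k}$, but the equal-subdivision version keeps the bookkeeping lightest.
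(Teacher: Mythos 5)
Your proof is correct, and its engine is the same one driving the paper's argument: an induction, via the semigroup property and monotonicity (Lemma \ref{lem:relninclusion}), showing that iterates of a single attractor-block relation remain attractor blocks. The difference lies in the decomposition of $t$. The paper fixes the relation $\Phi^T$, proves in a separate lemma (Lemma \ref{thm:AttBlk_DiscForwardTime}) that $B$ is an attractor block for every $\Phi^{nT}$, and then writes an arbitrary $t$ as $nT+s$ with $s\in[0,T)$, applying $\Phi^s$ once at the end:
\[
\Phi^t(\ol{B})=\Phi^s\left(\Phi^{nT}(\ol{B})\right)\subset\Phi^s(\Int(B))\subset\Phi^s(\ol{B})\subset\Int(B).
\]
You instead subdivide $t$ into $n=\lceil t/T\rceil$ equal pieces $s=t/n\in(0,T]$ and iterate $\Phi^s$ alone, so no remainder term ever appears. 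Your version is marginally cleaner on one point: the paper's final chain tacitly requires $s>0$, since for $s=0$ the inclusion $\Phi^0(\ol{B})\subset\Int(B)$ reads $\ol{B}\subset\Int(B)$ and is false in general, so the case where $t$ is an exact multiple of $T$ must silently fall back on the lemma alone; your equal subdivision guarantees $s\in(0,T]$ and a single uniform induction covers every $t>T$. What the paper's route buys in exchange is a freestanding lemma whose hypothesis involves only a single time $T$ rather than the whole window $(0,T]$, a statement of independent use, whereas your argument genuinely needs the block property on the full interval in order to choose $s=t/n$.
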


\begin{lem}\label{thm:AttBlk_DiscForwardTime}
Let $\Phi\cl\subset [0,\infty)\times X\times X$ be a multiflow. If $B\subset X$ is an attractor block for the relation $\Phi^T$ for some fixed time $T>0$, then $B$ is an attractor block for all relations in the set $\{\Phi^{n T}\}_{n\in\ZZ_{+}}$.
\end{lem}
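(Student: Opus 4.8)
The plan is to reduce the statement to a purely relation-theoretic induction, exploiting the semigroup structure of the multiflow to turn the times $nT$ into iterates of a single relation. Writing $f=\Phi^T$, the semigroup property $\Phi^{s+t}=\Phi^s\circ\Phi^t$ gives $\Phi^{nT}=(\Phi^T)^n=f^n$ for every positive integer $n$. Recalling (as in Lemma \ref{thm:RelnsOmCandInt}) that a set $B$ is an attractor block for a relation $g$ precisely when $g(\ol{B})\subset\Int(B)$, the hypothesis is exactly $f(\ol{B})\subset\Int(B)$, and the conclusion to be proved is $f^n(\ol{B})\subset\Int(B)$ for all $n\ge1$.

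I would prove this by induction on $n$. The base case $n=1$ is precisely the hypothesis. For the inductive step, assume $f^n(\ol{B})\subset\Int(B)$. Since $\Int(B)\subset B\subset\ol{B}$, this gives $f^n(\ol{B})\subset\ol{B}$. Applying the monotonicity of relations (Lemma \ref{lem:relninclusion}(1)) to this last inclusion yields
\[ f^{n+1}(\ol{B})=f\bigl(f^n(\ol{B})\bigr)\subset f(\ol{B})\subset\Int(B), \]
where the final inclusion is the base hypothesis. This closes the induction and establishes $f^n(\ol{B})\subset\Int(B)$, i.e. that $B$ is an attractor block for $\Phi^{nT}$ for every $n\in\ZZ_+$.

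There is no serious obstacle here; the only points requiring care are verifying $\Phi^{nT}=f^n$ from the semigroup property and invoking monotonicity in the correct direction (noting $\Int(B)\subset\ol{B}$, so that the inductive hypothesis feeds back into the domain of $f$). The argument is the discrete-time analogue of the elementary fact that an attractor block for a map is an attractor block for all of its forward iterates, and it supplies exactly the scaffolding on which Theorem \ref{Thm:AttBlkForwardTime} builds to reach all $t\in(0,\infty)$.
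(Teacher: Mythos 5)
Your proof is correct and follows essentially the same route as the paper's: induction on $n$, using the semigroup property to write $\Phi^{(n+1)T}=\Phi^T\circ\Phi^{nT}$ and monotonicity of relation images (Lemma \ref{lem:relninclusion}) to feed the inductive hypothesis back into the base inclusion $\Phi^T(\ol{B})\subset\Int(B)$. The only cosmetic difference is that you compress the paper's two monotonicity steps ($\Phi^{(n-1)T}(\ol{B})\subset\Int(B)\subset\ol{B}$) into a single application.
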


\begin{proof}
This is simply a proof by induction: $\Phi^T(\ol{B})\subset \Int(B).$ Then for any $n\in\ZZ_+$, assume $\Phi^{(n-1)T}(\ol{B})\subset\Int(B),$ and 
	\begin{align*}
		\Phi^{nT}(\ol{B}) =& ~\Phi^T\left( \Phi^{(n-1)T}(\ol{B})\right) \\
					\subset& ~\Phi^T\left( \Int(B) \right)\hbox{ (by the previous step)} \\
					\subset& ~\Phi^T(\ol{B}) \subset \Int(B).
	\end{align*}
Thus, $B$ is an attractor block for all fixed time relations in the set $\{\Phi^{nT}\}_{n\in\ZZ_+}$.
\end{proof}

\begin{proof}[Proof of Theorem \ref{Thm:AttBlkForwardTime}]
The proof is similar to that of the lemma. Let $t>0$ be any fixed forward time, and let it be rewritten as $t=nT+s$, where $n\in \ZZ_{\ge0}$ and $s\in[0,T)$. Then,
	\begin{align*}
		\Phi^t(\ol{B})=& ~ \Phi^{nT+s}(\ol{B}) \\
				=& ~ \Phi^s\left(\Phi^{nT}(\ol{B})\right) \\
				\subset& ~ \Phi^s(\Int(B))\hbox{         (by Lemma \ref{thm:AttBlk_DiscForwardTime})}\\
				\subset& \Phi^s(\ol{B}) \\
				\subset& ~ \Int(B),
	\end{align*}
making $B$ an attractor block for the relation $\Phi^t$.
\end{proof}

\begin{lem}
Let $\Phi$ be a multiflow over a topological space $X$, and let $B\subset X$ be a set such that $\ol{B}$ is eventually strictly confining (there exists a $T>0$ such that $\Phi^t(\ol{B})\subset\Int(B)$ for $t\ge T$). Then,

	\[ B\hbox{ and }\Int(B)\hbox{ are eventually confining.}\]
If, in addition, $X$ is a compact Hausdorff space, then
	\[\om(\ol{B};\Phi)=\om(B;\Phi)=\om(\Int(B);\Phi).\]
\end{lem}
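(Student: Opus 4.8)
The plan is to treat the two assertions in turn, deducing eventual confinement first and then feeding it into the equality of omega limit sets. For the first assertion I would use nothing beyond monotonicity of fixed-time relation images. Since $\Int(B)\subset B\subset\ol{B}$, applying Lemma \ref{lem:relninclusion} to each fixed-time relation $\Phi^t$ gives $\Phi^t(B)\subset\Phi^t(\ol{B})$ and $\Phi^t(\Int(B))\subset\Phi^t(\ol{B})$. For every $t\ge T$ the hypothesis yields $\Phi^t(\ol{B})\subset\Int(B)\subset B$, so $\Phi^t(B)\subset B$ and $\Phi^t(\Int(B))\subset\Int(B)$ for all $t\ge T$. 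Hence both $B$ and $\Int(B)$ are eventually confining, with the same threshold $T$.

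For the equalities I would first record the trivial direction. From $\Int(B)\subset B\subset\ol{B}$ and Theorem \ref{thm:DispOmPresIncn},
\[ \om(\Int(B);\Phi)\subset\om(B;\Phi)\subset\om(\ol{B};\Phi). \]
By the first part all three sets are eventually confining, so Corollary \ref{thm:EvConfOmegaEqual} permits me to replace each $\om$ by the corresponding strict omega limit set $\hat{\om}$; I work with $\hat{\om}$ from here on. It therefore suffices to establish the single reverse inclusion $\hat{\om}(\ol{B};\Phi)\subset\hat{\om}(\Int(B);\Phi)$.

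This reverse inclusion is the crux and the step I expect to be the main obstacle, since one must upgrade the ambient set $\ol{B}$ to its interior $\Int(B)$. The key device is a time-shift containment: for $t\ge T$ the semigroup property gives $\Phi^t(\ol{B})=\Phi^{t-T}(\Phi^T(\ol{B}))$, and since $\Phi^T(\ol{B})\subset\Int(B)$, monotonicity (Lemma \ref{lem:relninclusion}) yields $\Phi^t(\ol{B})\subset\Phi^{t-T}(\Int(B))$. Thus for any $s\ge T$,
\[ \bigcup\limits_{t\ge s}\Phi^t(\ol{B})\subset\bigcup\limits_{u\ge s-T}\Phi^u(\Int(B)), \]
and the inclusion persists under closure. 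The sets $\ol{\bigcup\limits_{t\ge s}\Phi^t(\ol{B})}$ decrease in $s$, so the intersection defining $\hat{\om}(\ol{B};\Phi)$ may be taken over $s\ge T$ alone; reindexing by $s'=s-T$ then gives
\[ \hat{\om}(\ol{B};\Phi)=\bigcap\limits_{s\ge T}\ol{\bigcup\limits_{t\ge s}\Phi^t(\ol{B})}\subset\bigcap\limits_{s'\ge0}\ol{\bigcup\limits_{u\ge s'}\Phi^u(\Int(B))}=\hat{\om}(\Int(B);\Phi). \]
Together with the trivial inclusions this collapses all three strict omega limit sets, and hence, via Corollary \ref{thm:EvConfOmegaEqual}, all three omega limit sets, to a common value. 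The one point to watch is that the $T$-shift in the union index is absorbed precisely because the defining intersection ranges over all starting times; this is why the computation is cleanest in the $\hat{\om}$ formulation rather than directly with the $\konf$-sets.
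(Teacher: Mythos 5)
Your proof is correct, and its first half (eventual confinement of $B$ and $\Int(B)$) is essentially identical to the paper's: both squeeze $\Phi^t(\Int(B))\subset\Phi^t(B)\subset\Phi^t(\ol{B})\subset\Int(B)\subset B$ for $t\ge T$. For the equality of the omega limit sets, however, you take a genuinely different route. The paper passes to the single fixed-time relation $f=\Phi^T$, observes that $B$ is an attractor block for $f$ (since $f(\ol{B})=\Phi^T(\ol{B})\subset\Int(B)$), invokes the imported relation-level result Lemma \ref{thm:RelnsOmCandInt} (McGehee's Lemma 7.6) to get $\om(\Int(B);f)=\om(B;f)=\om(\ol{B};f)$, and then uses Corollary \ref{thm:EvConfOmegaEqual} to transport each equality back to the multiflow. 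You instead stay at the multiflow level and prove the one nontrivial inclusion $\hat{\om}(\ol{B};\Phi)\subset\hat{\om}(\Int(B);\Phi)$ by hand, via the semigroup identity $\Phi^t(\ol{B})=\Phi^{t-T}(\Phi^T(\ol{B}))\subset\Phi^{t-T}(\Int(B))$ for $t\ge T$ and a reindexing of the nested closures defining $\hat{\om}$; this computation is valid, including the reduction of the intersection to $s\ge T$ by monotonicity in $s$. What your route buys is self-containedness: it replaces the appeal to the relation-level attractor-block machinery with an elementary argument that makes visible exactly where the hypothesis $\Phi^T(\ol{B})\subset\Int(B)$ enters, and it would survive in a development where Lemma \ref{thm:RelnsOmCandInt} had not been imported. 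One citation should be tightened: Corollary \ref{thm:EvConfOmegaEqual} is stated only for \emph{closed} eventually confining sets, so you cannot apply it to $B$ or $\Int(B)$ as you propose (the paper's own proof is loose on the same point). The repair is immediate: apply that corollary only to $\ol{B}$, obtaining $\om(\ol{B};\Phi)=\hat{\om}(\ol{B};\Phi)$, then use your key inclusion, and close the chain with the unconditioned result $\hat{\om}(\Int(B);\Phi)\subset\om(\Int(B);\Phi)$, which the paper proves for arbitrary subsets of a compact space; combined with the monotonicity chain $\om(\Int(B);\Phi)\subset\om(B;\Phi)\subset\om(\ol{B};\Phi)$ from Theorem \ref{thm:DispOmPresIncn}, all three omega limit sets collapse to a common value as claimed.
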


Similar things to (1) can be said of sets which are eventually strictly rejecting, with the inclusions going in the opposite directions. 

\begin{proof}
For the first item, let $T>0$ be a number such that $\Phi^t(\ol{B})\subset\Int(B)$ for all $t\ge T$.
Then for any given $t\ge T$,
	\[	\Phi^t(\Int(B))\subset\Phi^t(B)\subset\Phi^t(\ol{B})\subset\Int(B)\subset B.		\]
For the second item, let $f=\Phi^T$. By Corollary \ref{thm:EvConfOmegaEqual} and Lemma \ref{thm:RelnsOmCandInt},\begin{align*}
	\om(B;\Phi)=\om(\Int(B);f)=&\om(B;f)=\om(B;\Phi) \\
					=&\om(\ol{B};f)=\om(\ol{B};\Phi).
\end{align*}
\end{proof}

Finally, it may be useful to classify the behavior over more general sets - sets, where all we know initially is that they contain their own omega limit sets.

\begin{thm}
If $\Phi$ is a multiflow over a compact space $X$, and $U\subset X$. Then,
\begin{enumerate}
	\item $\om(U;\Phi)\subset \Int(U)\implies U$ is eventually strictly confining.
	\item $\om(U;\Phi)\subset U\implies U$ is eventually confining.
\end{enumerate}
\end{thm}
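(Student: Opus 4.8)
The plan is to run, for both parts, the same extraction argument used in the proof of Theorem~\ref{thm:AttImpliesAttBlk}. Write $\om(U;\Phi)=\bigcap\konf(U;\Phi)$, locate a \emph{finite} subcollection of $\konf(U;\Phi)$ whose intersection already sits inside the target region ($\Int(U)$ for part~(1), $U$ for part~(2)), and then exploit confinement of the chosen members to push $\Phi^t(U)$ into that intersection for all large $t$. The engine for the first step is the finite-intersection property for closed subsets of a compact space, packaged as Lemma~\ref{thm:finIntnNbhd}; the engine for the second step is the elementary observation that if $K$ is confining and $\Phi^{t_K}(U)\subset K$, then $\Phi^t(U)=\Phi^{t-t_K}\bigl(\Phi^{t_K}(U)\bigr)\subset\Phi^{t-t_K}(K)\subset K$ for every $t\ge t_K$.

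For part~(1), the hypothesis $\om(U;\Phi)\subset\Int(U)$ says exactly that $\Int(U)$ is an open neighborhood of $\bigcap\konf(U;\Phi)$, so Lemma~\ref{thm:finIntnNbhd} yields a finite $\fonf=\{K_1,\dots,K_n\}\subset\konf(U;\Phi)$ with $\bigcap\fonf\subset\Int(U)$. Each $K_i$ is closed and confining and satisfies $\Phi^{t_i}(U)\subset K_i$ for some $t_i>0$; setting $T=\max_i t_i$ and applying the pushing observation gives $\Phi^t(U)\subset K_i$ for all $t\ge T$ and every $i$, hence $\Phi^t(U)\subset\bigcap\fonf\subset\Int(U)$ for all $t\ge T$. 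That is precisely eventual strict confinement, finishing part~(1).

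For part~(2) I would attempt the identical argument, now seeking a finite $\fonf\subset\konf(U;\Phi)$ with $\bigcap\fonf\subset U$; once such an $\fonf$ is in hand the pushing step produces $T$ with $\Phi^t(U)\subset\bigcap\fonf\subset U$ for $t\ge T$, which is eventual confinement. The main obstacle is the very first step: Lemma~\ref{thm:finIntnNbhd} (equivalently, the finite-intersection property applied to the compact sets $K_i\cap U^{c}$) requires $U$ to be a \emph{neighborhood} of $\om(U;\Phi)$, i.e. $\om(U;\Phi)\subset\Int(U)$, so that $U^{c}$ is closed (hence compact). When $\om(U;\Phi)$ meets $\partial U$ this breaks down, and eventual confinement can genuinely fail there (e.g. the contraction $\dot x=-x$ on $X=[-1,1]$ with $U=[-1,0]\cup\{1\}$ has $\om(U;\Phi)=\{0\}\subset U$, yet $e^{-t}\in\Phi^t(U)$ and $e^{-t}\notin U$ for every $t>0$).

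I would therefore either establish part~(2) under the proviso that $U$ be open, in which case $U=\Int(U)$ and part~(2) collapses into part~(1), or read the intended hypothesis as $\om(U;\Phi)\subset\Int(U)$, under which part~(2) is an immediate weakening of part~(1) via the implication that eventually strictly confining sets are eventually confining. In short, the delicate boundary behaviour is the only real difficulty; the remainder is the routine confinement bookkeeping already carried out for Theorem~\ref{thm:AttImpliesAttBlk}.
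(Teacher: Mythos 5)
Your proof of part~(1) is correct, but it takes a genuinely different route from the paper's. The paper argues by contradiction: if $U$ is not eventually strictly confining, then the nested closed sets $\ol{\bigcup_{t\ge s}\Phi^t(U)}\cap\ol{U^c}$ are all nonempty, so by compactness their intersection $\hat{\om}(U;\Phi)\cap\ol{U^c}$ is nonempty, and the inclusion $\hat{\om}(U;\Phi)\subset\om(U;\Phi)$ then contradicts $\om(U;\Phi)\subset\Int(U)$. You instead argue directly, running the extraction from the proof of Theorem~\ref{thm:AttImpliesAttBlk}: Lemma~\ref{thm:finIntnNbhd} gives a finite $\fonf\subset\konf(U;\Phi)$ with $\bigcap\fonf\subset\Int(U)$, and the confinement pushing step produces an explicit $T$ with $\Phi^t(U)\subset\bigcap\fonf$ for $t\ge T$. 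Both arguments spend compactness at the same essential point (finite-intersection property for the closed $\konf$-sets versus nested closed sets); yours is constructive and reuses machinery the paper has already established, while the paper's contradiction template is designed so that, superficially, the same words cover both parts.

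On part~(2) you are right to balk, and in fact you have caught an error in the paper. The paper's proof of (2) says to repeat the argument of (1) ``except that we use $U^c$, rather than its closure,'' but then the sets $\ol{\bigcup_{t\ge s}\Phi^t(U)}\cap U^c$ are no longer closed, and a nested family of nonempty, non-closed sets in a compact space can have empty intersection; the step concluding $\hat{\om}(U;\Phi)\cap U^c\neq\es$ is unjustified. Your counterexample shows the statement is actually false as written: for the flow of $\dot{x}=-x$ on $X=[-1,1]$ (a closed multiflow) and $U=[-1,0]\cup\{1\}$ (note $U$ is even closed), any closed confining set containing $\Phi^t(U)=[-e^{-t},0]\cup\{e^{-t}\}$ must contain $[-e^{-t},e^{-t}]$, and these intervals themselves lie in $\konf(U;\Phi)$, so $\om(U;\Phi)=\{0\}\subset U$; yet $e^{-t}\in\Phi^t(U)\setminus U$ for every $t>0$, so $U$ is not eventually confining. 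The failure occurs precisely because $\om(U;\Phi)$ meets $\partial U$, and your proposed repair is the right one: strengthen the hypothesis of (2) to $\om(U;\Phi)\subset\Int(U)$ (or require $U$ open), after which (2) is an immediate consequence of (1).
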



\begin{proof}
First, let us assume that $\Phi$ is a multiflow over $X$. Then we start by proving (1), so we assume $U\subset X$ and $\om(U;\Phi)\subset \Int(U)$. Then, let us also assume that $U$ is not eventually strictly confining. So, for any $s>0$, there will always be a $\tau\ge s$ such that $\Phi^\tau(U)$ has some points outside $\Int(U)$, i.e., $\Phi^\tau(U)\cap \ol{U^c}\neq\es$. Thus, 
	\[ \es\neq \left(\Phi^\tau(U)\cap \ol{U^c} \right) \subset \left( \bigcup\limits_{t\ge s}\Phi^t(U)\cap \ol{U^c}  \right) \subset  \left( \ol{\bigcup\limits_{t\ge s}\Phi^t(U)}\cap \ol{U^c}  \right) \]
for all $s>0$. Therefore,
	\[ \es\neq \left(  \bigcap\limits_{s\ge 0}\ol{\bigcup\limits_{t\ge s} \Phi^t(U)} \cap \ol{U^c} \right) = \left(\hat{\om}(U;\Phi)\cap\ol{U^c}\right) \subset \left(\om(U;\Phi)\cap\ol{U^c}\right).\]
This implies $\om(U;\Phi)\not\subset \Int(U)$, which is contrary to our other assumptions, so $U$ must be eventually strictly confining.

We prove (2) by exactly the same method. Assuming $U$ is not eventually confining means that for all $s>0$ there is some $\tau\ge s$ such that $\Phi^\tau(U)$ has some points outside $U$, meaning $\Phi^\tau(U)\cap U^c\neq\es$. The rest follows just as in the proof of (1), except that we use $U^c$, rather than its closure.

\end{proof}


\newpage
\section{On the Semicontinuity of Attractors for Relations}\label{sec:CloseRelns}

Attractors for relations are semicontinuous in the following way: let $X$ be a compact metric space (so we have some notion of distance), $f$ a closed relation over $X$, and $A_f\subset X$ an attractor for $f$. For any neighborhood $\hat{N}_\alpha\subset X$ of $A_f$ ($\alpha>0$), there is an attractor block $B\subset \hat{N}_\alpha$ associated to $A_f$ (that is, $B$ is an attractor block and $\om(B;f)=A_f\subset\Int(B)$). As we will show in this section, we can find a neighborhood $\ol{N}_\ep(f)\subset X\times X$ ($\ep>0$) such that for any closed relation $g\subset \ol{N}_\ep$, $B$ is also an attractor block for $g$. Therefore, we can find an attractor for $g$: $$A_g\subset B\subset\hat{N}_\alpha.$$ Note that $\alpha$ can be as small as desired, and we will still be able to find a close enough family of relations, which also have attractors close enough to $A_f$.

Let $X$ be a compact metric space. Unless otherwise specified, whenever $x,~x'\in X$, $d(x,x')$ means distance in $X$ with the metric already provided. Likewise, the default for whenever $(x,y),~(x',y')\in X\times X$ is the Euclidean distance in $X\times X$: $$d((x,y),(x',y'))=\sqrt{(d(x,x'))^2+(d(y,y'))^2}.$$ The distance between sets is then calculated in the usual way. That is, for $S,S'\subset X$ where $X$ is some metric space,
	\[ d(S,S')=\inf\{d(x,x'):~ x\in S,~x'\in S'\}, \]
	where $\inf$ is the infemum. 
	
\begin{rmk} When deciding whether or not two relations are close {\bf to each other}, we use the Hausdorff metric: given two relations $f,g\subset X\times X$ over a metric space $X$, $d_H(f,g)$ is defined as the smallest $\ep\ge0$ such that
	\[ f\subset \ol{N_\ep}(g), \hbox{ and } g\subset \ol{N_\ep}(f), \]
where $N_\ep(S)$ is the closed $\ep$-neighborhood of a set. Our notion of ``semicontinuity" of relations is half of what is required for closeness in the Hausdorff metric; that is, we only require one containment. So, $g$ is ``{\bf close to $f$}" if $g\subset N_\ep(f)$ for some small $\ep>0$. 
\end{rmk}

\begin{rmk} Also recall that for a relation $f\overset{cl}\subset X\times X$, $B$ is an attractor block for $f$ if and only if $f\cap(\ol{B}\times\ol{B^C})=\es$. \ref{bk:McGeheeAttractors}

\end{rmk}

Let $X$ be a compact metric space, $f\overset{cl}{\subset}X\times X$ a relation over $X$. Define a function $F_f$ over the space of closed relations over $X$, $\ol{\mathcal{R}}_X$ by
	\begin{align*}      F_f:\ol{\mathcal{R}}_X\rightarrow& [0,\infty)\\
								g	\mapsto& \inf\{\ep\ge0: g\subset \ol{N}_\ep(f)\} \end{align*}

\begin{thm}\label{thm:CloseRelnsAttBlk}
Let $f,~g\overset{cl}\subset X\times X$ be two (closed) relations. Let $B$ be an attractor block for $f$. If the closed relation $g$ is in a small enough neighborhood (in $X\times X$) of $f$, then $B$ is also an attractor block for $g$.\end{thm}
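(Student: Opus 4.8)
The plan is to reduce everything to the closed-set characterization of an attractor block recorded in the remark above: $B$ is an attractor block for a closed relation $f$ if and only if $f\cap(\ol{B}\times\ol{B^C})=\es$. Under this reformulation the conclusion we want for $g$ is simply $g\cap(\ol{B}\times\ol{B^C})=\es$, so the whole theorem becomes a statement about disjointness of closed subsets of the compact space $X\times X$ being stable under small perturbations.

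First I would observe that, since $B$ is an attractor block for $f$, the two sets $f$ and $C:=\ol{B}\times\ol{B^C}$ are disjoint. Both are closed: $f$ by hypothesis, and $C$ because $\ol{B}$ and $\ol{B^C}$ are closed subsets of $X$ and a product of closed sets is closed. As $X$ is compact, so is $X\times X$, and closed subsets of a compact space are compact; thus $f$ and $C$ are disjoint compact sets.

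The key step is then the elementary metric fact that two disjoint compact sets in a metric space are separated by a positive distance. Setting $\delta:=d(f,C)$, compactness guarantees $\delta>0$ (the infimum defining $d(f,C)$ is attained, and were it $0$ the realizing points would coincide, contradicting disjointness). This positivity is the whole engine of the argument, and it is exactly the step I expect to be the crux: it is where compactness of $X$ is genuinely used, and it would fail over a general (noncompact) metric space, where disjoint closed sets can approach one another arbitrarily closely. Everything before it is bookkeeping and everything after it is a triangle inequality.

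Finally I would choose any $\ep$ with $0<\ep<\delta$ and show that if $g\subset\ol{N}_\ep(f)$ then $g\cap C=\es$. Take any $(x,y)\in g$; since $g\subset\ol{N}_\ep(f)$, there is $(a,b)\in f$ with $d\big((x,y),(a,b)\big)\le\ep$. The reverse triangle inequality, together with $d\big((a,b),C\big)\ge d(f,C)=\delta$, gives
\[ d\big((x,y),C\big)\ge d\big((a,b),C\big)-d\big((a,b),(x,y)\big)\ge \delta-\ep>0, \]
so $(x,y)\notin C$. As $(x,y)\in g$ was arbitrary, $g\cap C=\es$, which is precisely the statement that $B$ is an attractor block for $g$. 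Hence any closed relation $g$ lying in the $\ep$-neighborhood of $f$ inherits $B$ as an attractor block, with the admissible radius $\ep$ controlled by the separation $\delta=d(f,\ol{B}\times\ol{B^C})$.
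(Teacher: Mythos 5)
Your proposal is correct and follows essentially the same route as the paper: both reduce the statement to the disjointness characterization $f\cap(\ol{B}\times\ol{B^c})=\es$, take $\delta=d\left(f,\ol{B}\times\ol{B^c}\right)$ as the separation in $X\times X$, and show any closed relation $g\subset\ol{N}_\ep(f)$ with $\ep<\delta$ remains disjoint from $\ol{B}\times\ol{B^c}$. Your write-up is in fact slightly more complete than the paper's, since you explicitly justify $\delta>0$ via compactness and spell out the triangle inequality that the paper's Lemma \ref{lem:RelnBubbleAttBlk} treats as a ``straight-forward result.''
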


\begin{rmk}[Initial thoughts]
The initial intuition may be to consider the distance between $\ol{B}$ and the closure of its image, $\ol{f(B)}$. That is, we wonder if it is enough to set $\delta=d(\ol{B},\ol{f(B)})>0$, and if $d_H(f,g)=\ep<\delta$, then $B$ would also be an attractor block for $g$. The following counterexample is provided, showing that we really do need to consider the distances in $X\times X$ and not just in the image set.
\end{rmk}

\begin{ex}\label{Ex:CE1}
Counterexample: Let $X=[0,3].$ Consider $$f=\left([0.8,2+\alpha]\times\{1.5\}\right) \cup \left(\{2+\alpha\}\times[1.5,3] \right).$$ Then $B=[1,2]$ is an attractor block: $$\ol{B}\times\ol{B^c} = [1,2]\times\left([0,1]\cup[2,3]\right),$$ which has no overlap with $f$. Furthermore, $d(\ol{f(B)},\ol{B^c})=0.5=\delta$, while $d( f, \ol{B}\times\ol{B^c}) = \min\{0.5, \alpha\}$. See Figure \ref{Fig:CE1} for an illustration in which $\alpha = 0.1.$

Note that $\alpha$ can be arbitrarily small without changing the nature of $B$. So, let $\alpha = 0.1<0.5.$ Let us take $\ep$ to be between them: $\alpha<\ep<\delta$. In this case, $\ep=0.15$ works. Set
	\[ g = \ol{N_{\ep}}(f)=\ol{N_{0.15}}(f).\]
Then, $g$ is within $0.5$ of $f$, but $g\cap (\ol{B}\times\ol{B^c}) \neq \es$, meaning $B$ fails to be an attractor block for $g$.

\begin{figure}\centering[h]\label{Fig:CE1}
  \includegraphics[width=3.5in]{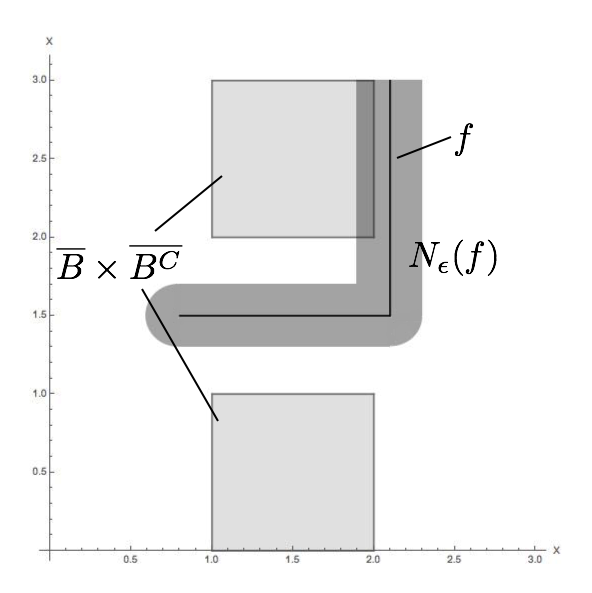}\label{fig:MapEx1f}
  \caption{Illustration of $f$, $N_\ep(f)$, and $\ol{B}\times\ol{B^c}$ from Example \ref{Ex:CE1}.}
\end{figure}

Scaling $\ep$ based on $d( f, \ol{B}\times\ol{B^c})$ won't work, either. The choice of $\alpha$ could have been $10^{-100}$ and we could still choose $\ep$ with $\alpha<\ep<\delta$, without changing the nature of the counterexample.

\end{ex}

\begin{lem}\label{lem:RelnBubbleAttBlk}
Let $f\overset{cl}\subset X\times X$ be a closed relation with $B$ an attractor block for $f$. Let $\delta = d( f, \ol{B}\times\ol{B^c} )>0$. Then $\ol{N_\ep}(f)\cap (\ol{B}\times\ol{B^c})=\es$ for any $\ep<\delta$.
\end{lem}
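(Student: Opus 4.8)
The plan is to argue by contradiction directly from the definition of the set-to-set distance, exploiting the fact that $\delta$ is a greatest lower bound. First I would suppose, toward a contradiction, that for some $\ep < \delta$ there is a point $q$ in the intersection $\ol{N_\ep}(f)\cap(\ol{B}\times\ol{B^c})$. Since $\ol{N_\ep}(f)$ is the closed $\ep$-neighborhood of $f$, membership of $q$ there gives the upper bound $d(q,f)=\inf_{p\in f}d(q,p)\le\ep$.

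Next I would unwind the hypothesis $\delta = d(f,\ol{B}\times\ol{B^c}) = \inf\{d(p,q'):p\in f,\ q'\in\ol{B}\times\ol{B^c}\}$. Because $\delta$ is the infimum over all such pairs, it is a lower bound for each of them, so $d(p,q')\ge\delta$ for every $p\in f$ and every $q'\in\ol{B}\times\ol{B^c}$. Specializing $q'$ to our fixed point $q\in\ol{B}\times\ol{B^c}$ yields $d(p,q)\ge\delta$ for all $p\in f$, and taking the infimum over $p\in f$ gives the lower bound $d(q,f)\ge\delta$.

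Combining the two bounds produces $\delta\le d(q,f)\le\ep<\delta$, which is absurd; hence no such $q$ exists and $\ol{N_\ep}(f)\cap(\ol{B}\times\ol{B^c})=\es$. This argument uses nothing beyond the hypothesis $\delta>0$ (which itself follows, via compactness of the closed sets $f$ and $\ol{B}\times\ol{B^c}$ in the compact space $X\times X$, from $B$ being an attractor block so that these two sets are disjoint), so there is no substantial obstacle here.

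The only step demanding genuine care is the direction of the inequalities when passing to infima: that $\delta$ being a lower bound forces the pairwise inequality $d(p,q')\ge\delta$ rather than its reverse. This is also precisely why the strict inequality $\ep<\delta$ is required — it is what separates the closed neighborhood $\ol{N_\ep}(f)$ from $\ol{B}\times\ol{B^c}$, since at $\ep=\delta$ the infimum $\delta$ might be attained and a single common boundary point could appear.
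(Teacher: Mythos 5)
Your proof is correct and takes essentially the same approach as the paper: the paper's one-line argument says precisely that every point of $\ol{B}\times\ol{B^c}$ lies at distance at least $\delta$ from $f$, so no point of $\ol{N_\ep}(f)$ with $\ep<\delta$ can reach it. Your contradiction framing, the explicit handling of the infima, and the compactness remark explaining why $\delta>0$ are simply rigorous versions of what the paper leaves implicit.
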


\begin{proof}
This is a straight-forward result. The distance from $f$ to any point in $\ol{B}\times \ol{B^c}$ is at least $\delta$, so any point in $\ol{N_\ep}(f)$ has not strayed far enough from $f$ to land in $\ol{B}\times\ol{B^c}$.
\end{proof}

\begin{proof}[Proof of Theorem \ref{thm:CloseRelnsAttBlk}] Let $d_H(f,g)=\ep$. We claim that if $\ep<\delta=d(f,\ol{B}\times\ol{B^c})$, then $B$ is also an attractor block for $g$. By definition of the Hausdorff metric, we know 
	\[g\subset N_\ep(f)\subset \ol{N_\ep}(f) \subset N_\delta(f).\]
	Thus, by Lemma \ref{lem:RelnBubbleAttBlk}, $g\cap(\ol{B}\times\ol{B^c})\subset N_\delta(f) \cap (\ol{B}\times\ol{B^c}) = \es,$ making $B$ an attractor block for $g$ as well.

\end{proof}	

Technically, one does not require the full strength of closeness in the Hausdorff metric; one side of the inclusion will suffice.

\begin{thm}\label{thm:eBallRelnsAttBlk}
Let $f,g\overset{cl}\subset X\times X$ be closed relations with $B$ an attractor block for $f$. Let $\delta = d( f, \ol{B}\times\ol{B^c} )>0$. If $g\subset \ol{N_\ep}(f)$, then $B$ is an attractor block for $g$.
\end{thm}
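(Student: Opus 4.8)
The plan is to observe that this is the one-sided refinement of Theorem~\ref{thm:CloseRelnsAttBlk}: whereas that result invoked the full Hausdorff metric $d_H(f,g)$, here only the single containment $g\subset\ol{N_\ep}(f)$ is available, and I want to argue that this weaker hypothesis already suffices. The reason it should suffice is that the attractor-block property of a closed relation is, by the remark preceding these theorems, a purely \emph{negative} condition: $B$ is an attractor block for a closed relation $h$ if and only if $h\cap(\ol{B}\times\ol{B^c})=\es$. Thus being an attractor block for $g$ requires only that $g$ \emph{avoid} the forbidden region $\ol{B}\times\ol{B^c}$; whether $f$ in turn lies close to $g$ (the reverse Hausdorff containment) plays no role whatsoever.

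With that reduction, the argument is short. Reading the hypothesis with the implicit requirement $\ep<\delta$ (where $\delta=d(f,\ol{B}\times\ol{B^c})>0$), I would first apply Lemma~\ref{lem:RelnBubbleAttBlk} to conclude
\[
\ol{N_\ep}(f)\cap(\ol{B}\times\ol{B^c})=\es.
\]
Then, from the containment $g\subset\ol{N_\ep}(f)$ together with monotonicity of intersection, it follows that
\[
g\cap(\ol{B}\times\ol{B^c})\subset\ol{N_\ep}(f)\cap(\ol{B}\times\ol{B^c})=\es.
\]
Invoking the attractor-block characterization once more, this gives that $B$ is an attractor block for $g$, which completes the proof.

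There is essentially no obstacle here, since all the geometric content has been packaged into Lemma~\ref{lem:RelnBubbleAttBlk}, which guarantees that an $\ep$-neighborhood of $f$ with $\ep<\delta$ cannot reach the region $\ol{B}\times\ol{B^c}$. The only point worth flagging is that the statement must be read with the implicit condition $\ep<\delta$: otherwise $\ol{N_\ep}(f)$ could meet the forbidden region and the conclusion would fail. This matches exactly the hypothesis used in the companion Theorem~\ref{thm:CloseRelnsAttBlk} and in Lemma~\ref{lem:RelnBubbleAttBlk}, so the present theorem is genuinely just the same estimate with the superfluous second Hausdorff inclusion discarded.
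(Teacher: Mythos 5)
Your proposal is correct and matches the paper's own argument: the paper proves Theorem~\ref{thm:eBallRelnsAttBlk} by noting that the proof of Theorem~\ref{thm:CloseRelnsAttBlk} (which already only used the single containment $g\subset\ol{N_\ep}(f)$ together with Lemma~\ref{lem:RelnBubbleAttBlk} and the characterization $g\cap(\ol{B}\times\ol{B^c})=\es$) carries over verbatim. Your observation that the hypothesis must implicitly include $\ep<\delta$ is also right, and is the same reading the paper relies on.
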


The proof for Theorem \ref{thm:CloseRelnsAttBlk} also works for Theorem \ref{thm:eBallRelnsAttBlk}.

This result is key in defining the continuation properties pioneered by C. Conley. These theorems imply that nearby relations will have similar end behaviors. Therefore, a relation can be altered, but in a small enough way that the topological properties do not change. 

Let us say, for instance, that we have a sequence of maps $\{f_n\}$ over the compact metric space $X$, whose graphs limit to the relation $f$ (whether or not $f$ is a graph of a map). Then, say $f$ has an attractor $A$. Choose a neighborhood $V$ of $A$, and we are guaranteed the existence an attractor block $B\subset V$, associated to $A$. We can then find $\delta(B)$, as described in Lemma \ref{lem:RelnBubbleAttBlk}, and fix $0<\ep<\delta(B)$. For any $f_i$ in the sequence, whose graph satisfies $\text{graph }\subset N_\ep(f)$, $B$ is also an attractor block, implying that those maps $f_i$ also have attractors in $B$.

\newpage
\section*{References}\label{references_chapter}
\begin{enumerate}[label={[\arabic*]},itemsep=0mm]
\item P. Collet, J.P. Eckmann, {\it Iterated Maps on the Interval as Dynamical Systems}, Birkh\"{a}user, Boston, (1980).\label{bk:ColletEckmann}
\item C. Conley, {\it Isolated Invariant Sets and the Morse Index}, Reg. Conf. in Math. {\bf 38} CBMS (1978).\label{bk:Conley}
\item M. Di Bernardo, C.J. Budd, A.R. Champneys \& P. Kowalczyk,
{\it Piecewise-smooth Dynamical Systems Theory and Applications}, Springer (2008).\label{bk:BBCK}
\item A. F. Filippov, {\it Differential Equations with Discontinuous Righthand Sides}, Dept. Math., Moscow State University, U.S.S.R., Kluwer Acad. Pub., Boston, MA (1988).\label{bk:Filippov}
\item J. Guckenheimer, Piecewise-Smooth Dynamical Systems, SIAM Review {\bf 50} (2008), pp. 606-609. \label{bk:Guckenheimer}
\item J. Leifeld, {\it Smooth and Nonsmooth Bifurcations in Welander's Convection Model}, thesis, University of Minnesota (2016). \label{bk:Leifeld}
\item R. McGehee, {\it Attractors for Closed Relations on Compact Hausdorff Spaces}, IN U. Math. Journal {\bf 41} {\it 4} (1992).\label{bk:McGeheeAttractors}
\item R. McGehee, personal communication (2015-2016).\label{pc:McGehee}
\item R. McGehee, E. Sander {\it A New Proof of the Stable Manifold Thm}, ZAMP {\bf 74} {\it 4} (1996), pp. 497-513. \label{bk:McGeheeSander}
\item R. McGehee, C. Thieme, personal communication (2017-2018). \label{pc:Cameron}
\item R. McGehee, T. Wiandt {\it Conley Decomposition for Closed Relations}, preprint (2005).\label{bk:McGeheeWiandt}
\item K. J. Meyer, personal communication (2016-2018). \label{pc:KJMeyer}
\item K. Mischaikow, {\it The Conley Theory: A Brief Introduction}, Center for Dyn. Sys. and Nonlinear Studies, Georgia Inst. Tech., Atlanta, GA, Banach Center Publications, Vol **, Inst. of Math., Polish Acad. of Sci. Warszawa {\bf 199*} (1991).\label{bk:Mischaikow}

\end{enumerate}

\newpage
\appendix
\section{An Alternate Proof.}\label{sec:AppendixA}


Note that with the proof provided in Section \ref{sec:AttAndAttBlk} necessitates that one shows elsewhere that the resulting attractor $A$ is invariant. With this proof, though it is less elegant, that fact comes for free.

\begin{thm}\label{thm:efConfSet!Att}
Let $\Phi$ be a multiflow over a compact Hausdorff space $X$, and let $B$ be an attracting neighborhood for $\Phi$ (so $B$ is eventually strictly confining).
\end{thm}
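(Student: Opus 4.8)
The statement to be completed and proved is the analogue of Theorem \ref{thm:AttBlkGivesAtt}: that $B$ contains a unique attractor $A=\om(B;\Phi)$, the novelty being that the invariance of $A$ should fall out of an explicit description of $A$ rather than from Corollary \ref{thm:EvConfOmegaEqual}. First I would fix $T>0$ with $\Phi^t(\ol B)\subset\Int(B)$ for all $t\ge T$; in particular $\Phi^T(B)\subset B$, so $B$ is a closed confining set (indeed an attractor block) for the single relation $f:=\Phi^T$. Applying the relation-level result Theorem \ref{thm:EquivOmLimRelnClConf} to $f$ yields the concrete formula
\[
\om(B;\Phi^T)=\hat\om(B;\Phi^T)=\bigcap_{n\ge 0}\Phi^{nT}(B)=:A,
\]
which is closed and satisfies $A\subset\Phi^T(B)\subset\Int(B)$, so that $B$ is genuinely a neighborhood of $A$. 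Only the formula itself, together with the eventual confinement $\Phi^s(B)\subset B$ for $s\ge T$, will be used below.

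The heart of the argument is to upgrade the resulting $\Phi^T$-invariance to invariance under every $\Phi^t$, $t>0$, working entirely from $A=\bigcap_n\Phi^{nT}(B)$. For the confining inclusion I would apply Lemma \ref{lem:relnUandInt} to get $\Phi^t(A)\subset\bigcap_n\Phi^{nT+t}(B)$, then for each fixed $m$ choose $n$ large enough that $nT+t-mT\ge T$, so that $\Phi^{nT+t}(B)=\Phi^{mT}\!\big(\Phi^{nT+t-mT}(B)\big)\subset\Phi^{mT}(B)$; intersecting over $m$ gives $\Phi^t(A)\subset A$. The reverse (backward-complete) inclusion $A\subset\Phi^t(A)$ is the main obstacle: given $a\in A$ and $t>0$, since $a\in\Phi^{nT}(B)$ for every $n$, for large $n$ I would factor $a\in\Phi^{nT}(B)=\Phi^t\!\big(\Phi^{nT-t}(B)\big)$ to produce $c_n\in\Phi^{nT-t}(B)\subset B$ with $(c_n,a)\in\Phi^t$. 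Compactness of $X$, hence of $B$, extracts a convergent subsequence $c_{n_k}\to c^*$; closedness of the relation $\Phi^t$ forces $(c^*,a)\in\Phi^t$; and the nesting $\Phi^{(m+1)T}(B)\subset\Phi^{mT}(B)$ together with closedness of each $\Phi^{mT}(B)$ places $c^*\in\bigcap_m\Phi^{mT}(B)=A$. Thus $a\in\Phi^t(A)$, and combining the two inclusions gives $\Phi^t(A)=A$ for all $t>0$ — the invariance obtained ``for free.''

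Finally I would identify $A$ with the multiflow omega limit set $\om(B;\Phi)=\bigcap\konf(B;\Phi)$. For one inclusion, Theorem \ref{thm:KonfNonEmpty} and Corollary \ref{thm:ImgKonfInKonf} place every $\Phi^{nT}(K_T)$ (with $K_T=\bigcap_{s\in[0,T]}\Phi^s(B)$) in $\konf(B;\Phi)$, and a reindexing using $\Phi^{2T}(B)\subset K_T\subset B$ shows $\bigcap_n\Phi^{nT}(K_T)=A$, whence $\om(B;\Phi)\subset A$. For the other, I would take an arbitrary $K'\in\konf(B;\Phi)$ with $\Phi^s(B)\subset K'$, push forward by $\Phi^{nT}$ and use that $K'$ is confining to obtain $\Phi^{nT+s}(B)\subset K'$ for all $n$, then absorb the offset $s$ via eventual confinement to get $A\subset\Phi^{nT+s}(B)\subset K'$; intersecting over $K'$ gives $A\subset\om(B;\Phi)$. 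With $A=\om(B;\Phi)$ now compact, invariant, and contained in $\Int(B)$, it is an attractor with attracting neighborhood $B$, and uniqueness is immediate since any attractor admitting $B$ as an attracting neighborhood must by definition equal $\om(B;\Phi)$ (alternatively, via the maximality in Theorem \ref{thm:AttMaxInvt}). I expect the reindexing and offset-absorption bookkeeping in the invariance upgrade, and the compactness limiting argument for backward completeness, to be the only genuinely delicate points.
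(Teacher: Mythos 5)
Your proposal is correct in substance but takes a genuinely different route from the paper's own proof. The paper fixes an arbitrary $t>0$, declares $A=\om(B;\Phi^t)$, and shows this set is independent of $t$ by splitting into the cases $s/t$ rational and irrational: the rational case uses a common multiple ($\Phi^{qs}=\Phi^{pt}$), while the irrational case invokes Dirichlet's approximation theorem to write $qs=p_1t+p_2t+\beta$ with $\beta$ small and $p_2t>t_0$, compares the restricted relations $\Phi^{qs}\big|_{B\times X}\subset\Phi^{p_1t}\big|_{B\times X}$, and appeals to McGehee's relation-level Theorems 5.9, 5.10, 5.11, and 5.15 to equate the omega limit sets; invariance of $A$ then comes for free from that theory, and $B$ is a neighborhood of $A$ by McGehee's Theorem 7.2. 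You instead work with a single time $T$, extract the explicit formula $A=\bigcap_{n\ge0}\Phi^{nT}(B)$ from Theorem \ref{thm:EquivOmLimRelnClConf}, upgrade invariance to every $\Phi^t$ by hand (semigroup bookkeeping for $\Phi^t(A)\subset A$, a compactness argument for $A\subset\Phi^t(A)$), and then identify $A=\om(B;\Phi)$ by a two-sided $\konf$-set argument. Your route avoids the number theory entirely and proves directly the identification with the multiflow omega limit set $\om(B;\Phi)$, which the appendix proof never states explicitly; conversely, the paper's argument yields time-independence of $\om(B;\Phi^s)$ over all $s>0$, which you obtain only for $s=T$ (though it would follow from your work combined with Theorems \ref{thm:EvConfRel} and \ref{thm:OmHats}).

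One step needs repair: in the backward-completeness argument you claim that compactness of $X$, hence of $B$, lets you extract a convergent subsequence $c_{n_k}\to c^*$. The ambient hypothesis is compact \emph{Hausdorff}, not compact metric, and compact Hausdorff spaces need not be sequentially compact, so this extraction is not justified. The fix is standard and stays inside your framework: for each $m$ consider $C_m=\Phi^{mT}(B)\cap\{x\in X:(x,a)\in\Phi^t\}$. Each $C_m$ is closed ($\Phi^{mT}(B)$ is closed by Lemma \ref{thm:closedImages}, and the second set is the preimage of the closed relation $\Phi^t$ under the continuous map $x\mapsto(x,a)$); each is nonempty by your factoring $a\in\Phi^{nT}(B)=\Phi^t\bigl(\Phi^{nT-t}(B)\bigr)$ together with $\Phi^{nT-t}(B)\subset\Phi^{mT}(B)$ for $n$ large; and the $C_m$ are nested decreasing. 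By the finite intersection property in the compact space $X$, $\bigcap_m C_m\neq\es$, and any $c^*$ in this intersection lies in $A$ and satisfies $(c^*,a)\in\Phi^t$, which is exactly what your subsequence was meant to produce. (Alternatively, pass to a convergent subnet.) With this substitution, the rest of your argument, including the forward inclusion and the identification $A=\om(B;\Phi)$, goes through as written.
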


\begin{proof}
Let $\Phi$, $X$, and $B$ be as described in the theorem; in particular $B$ is eventually strictly confining. Let $t_0>0$ be a number such that $\Phi^t(B)\subset \Int(B)$ for all $t\ge t_0$.
We propose that given any $t>0$, $\om(B,\Phi^t)=A$ is the attractor associated with $B$ under $\Phi$. 

Certainly, $A$ is the attractor associated with $B$ under the fixed time relation $\Phi^t$. 
The question is: does $A$ serve as the attractor associated with $B$ for all relations $\Phi^s$ ($s>0$)?
We consider the cases in which $\frac{s}{t}$ is rational and irrational.

\noindent{\bf Case 1: $\frac{s}{t}\in\QQ$}
Find $p,~q\in\ZZ_+$ such that $\frac{s}{t}=\frac{p}{q}$. Then, $qs=pt$, and so $\Phi^{qs}=\Phi^{pt}$ are the same relation. By Theorem 5.11 in \ref{bk:McGeheeAttractors},
	\[ \om(B;\Phi^s)=\om(B;\Phi^{qs})=\om(B;\Phi^{pt})=\om(B;\Phi^t).\]


\noindent{\bf Case 2: $\frac{s}{t}\notin\QQ$}

We employ Dirichlet's approximation theorem, which states that there exist arbitrarily large $p,~q\in\ZZ_+$ for which $\left| \frac{s}{t}-\frac{p}{q}\right|<\frac{1}{q^2}$. Then $\left| sq-pt\right|<\frac{t}{q}$. Since $t$ is fixed, we can find a $p$ and a $q$ so that $\frac{t}{q}$ is arbitrarily small, say less than some $\alpha\in(0,1)$. At the very least, we want $\alpha<1$. Because $p$ and $q$ are arbitrarily large, let us also require that $p=p_1+p_2$, where $p_2 t > t_0$ and $p_1>0$. 

Now we can say without loss of generality $0< sq - pt =\beta< \alpha$. Then $sq = pt+\beta = p_1t+p_2t+\beta$, and so $\Phi^{sq}=\Phi^{p_1t}(\Phi^{p_2t+\beta})$ are the same relation.

Because $p_2 t >t_0$,
\begin{align} 
	\Phi^{qs}\big|_{B\times X}= \Phi^{p_1t + p_2t+\beta}\big|_{B\times X} =& \Phi^{p_1 t}(\Phi^{p_2 t+\beta})\big|_{B\times X}\subset  \Phi^{p_1t}\big|_{B\times X}.
\end{align}
By Theorems 5.15, 5.11, and 5.10 of \ref{bk:McGeheeAttractors},
\begin{align*}
\om(B;\Phi^s) =&~ \om(B;\Phi^{qs}) \hspace{.95in}\hbox{by Thm 5.11}\\
					=&~ \om\left(B;\Phi^{qs}\big|_{B\times X}\right) \hspace{.5in}\hbox{by Thm 5.15}\\
					\subset&~ \om\left(B;\Phi^{p_1t}\big|_{B\times X}\right) \hspace{.5in}\hbox{by Thm 5.10 and (2)}\\
					=&~ \om(B;\Phi^{p_1t})\\
					=&~ \om(B;\Phi^t)=A.
\end{align*}
For the other inclusion, we keep the values of $p, p_1, p_2,$ and $q$, and we find $N\in\ZZ_{>0}$ such that $Np_2t - p_1 t - \beta>t_0$. Then, 
	\[ p_2 t = qs - p_1t -\beta \implies (N+1)p_2 t = qs  + Np_2t - p_1t - \beta ,\]
	and
\begin{align}
	\Phi^{(N+1)p_2 t}\big|_{B\times X} =& \Phi^{qs}(\Phi^{Np_2t-p_1t-\beta})\big|_{B\times X} \subset \Phi^{qs}\big|_{B\times X}.
\end{align}

	Following the same logic, we get
\begin{align*}
\om(B;\Phi^t) =&~ \om(B;\Phi^{(N+1)p_2t}) \\
					=&~ \om\left(B;\Phi^{(N+1)p_2t}\big|_{B\times X}\right) \\
					\subset&~ \om\left(B;\Phi^{qs}\big|_{B\times X}\right) \hspace{.5in}\hbox{by Thm 5.10 and (3)}\\ 
					=&~ \om(B;\Phi^{qs})\\
					=&~ \om(B;\Phi^s).
\end{align*}

Again using Theorem 5.9, $A$ is invariant under the closed relations $\Phi^t$ for any $t\ge0$. Also, $B$ is a neighborhood of $A$ by Theorem 7.2 from \ref{bk:McGeheeAttractors}, for which $\om(B;\Phi^t)=A$ for all $t>0$. Thus $A$ is the unique attractor for any fixed time relations $\Phi^t$, $t>0$.\end{proof}

\end{document}